\def\ACF{\textsc{acf}}
\newcommand\GVF[1][]{\textsc{gvf}\textsubscript{#1}}
\def\ddc{\mathop{dd^{\mathrm c}}}
\def\C{\mathbf C}
\def\F{\mathbf F}
\def\N{\mathbf N}
\def\P{\mathbf P}
\def\Q{\mathbf Q}
\def\R{\mathbf R}
\def\Z{\mathbf Z}
\def\ovR{\overline\R}
\def\an{{\mathrm{an}}}
\def\Aut{\mathrm{Aut}}
\def\Val{\mathrm{Val}}
\def\PVal{\mathrm{PVal}}
\def\QVal{\mathrm{QVal}}
\def\hDiv{\widehat{\mathrm{Div}}}
\def\Div{\mathrm{Div}}
\def\bDiv{\mathrm{bDiv}}
\def\div{\operatorname{div}}
\def\hdeg{\widehat{\operatorname{deg}}}
\def\hdiv{\widehat{\operatorname{div}}}
\def\degtr{\operatorname{deg.tr}\nolimits}
\def\abs #1{\lvert #1\rvert}
\def\Abs #1{\left\lvert #1\right\rvert}
\def\norm #1{\lVert #1\rVert}
\let\epsilon\varepsilon\let\eps\varepsilon
\let\phi\varphi
\def\vol{\operatorname{vol}}
\def\hvol{\widehat{\operatorname{vol}}}
\def\Spec{\mathrm{Spec}}
\def\Card{\mathrm{Card}}
\def\sozat{\,;\,}
\def\Ht{\operatorname{ht}}
\def\setminus{\mathchoice
    {\mathbin{\vrule height .72ex width 1.61ex depth -.38ex}}
    {\mathbin{\vrule height .72ex width 1.61ex depth -.38ex}}
    {\mathbin{\vrule height .50ex width 0.85ex depth -.28ex}}
    {\mathbin{\vrule height .20ex width 0.570ex depth -.24ex}}
}
  \def\cl@section{\@elt {subsection}}
  \def\cl@subsection{\@elt {subsubsection}\@elt {defi}}
  \let\c@equation\c@defi
  \DeclareMathSymbol{A}{\mathalpha}{operators}{`A}%
  \DeclareMathSymbol{B}{\mathalpha}{operators}{`B}%
  \DeclareMathSymbol{C}{\mathalpha}{operators}{`C}%
  \DeclareMathSymbol{D}{\mathalpha}{operators}{`D}%
  \DeclareMathSymbol{E}{\mathalpha}{operators}{`E}%
  \DeclareMathSymbol{F}{\mathalpha}{operators}{`F}%
  \DeclareMathSymbol{G}{\mathalpha}{operators}{`G}%
  \DeclareMathSymbol{H}{\mathalpha}{operators}{`H}%
  \DeclareMathSymbol{I}{\mathalpha}{operators}{`I}%
  \DeclareMathSymbol{J}{\mathalpha}{operators}{`J}%
  \DeclareMathSymbol{K}{\mathalpha}{operators}{`K}%
  \DeclareMathSymbol{L}{\mathalpha}{operators}{`L}%
  \DeclareMathSymbol{M}{\mathalpha}{operators}{`M}%
  \DeclareMathSymbol{N}{\mathalpha}{operators}{`N}%
  \DeclareMathSymbol{O}{\mathalpha}{operators}{`O}%
  \DeclareMathSymbol{P}{\mathalpha}{operators}{`P}%
  \DeclareMathSymbol{Q}{\mathalpha}{operators}{`Q}%
  \DeclareMathSymbol{R}{\mathalpha}{operators}{`R}%
  \DeclareMathSymbol{S}{\mathalpha}{operators}{`S}%
  \DeclareMathSymbol{T}{\mathalpha}{operators}{`T}%
  \DeclareMathSymbol{U}{\mathalpha}{operators}{`U}%
  \DeclareMathSymbol{V}{\mathalpha}{operators}{`V}%
  \DeclareMathSymbol{W}{\mathalpha}{operators}{`W}%
  \DeclareMathSymbol{X}{\mathalpha}{operators}{`X}%
  \DeclareMathSymbol{Y}{\mathalpha}{operators}{`Y}%
  \DeclareMathSymbol{Z}{\mathalpha}{operators}{`Z}%
\renewcommand{\bibnamedash}{\leavevmode\raise3pt\hbox to3em{\hrulefill}\space}
\date{Juin 2025}
\title{La logique continue des corps globalement valu\'es}
\author{Antoine Chambert-Loir}
\address{Universit\'e Paris Cit\'e, IMJ-PRG \\ 8 place Aur\'elie Nemours, 75013 Paris}
\email{antoine.chambert-loir@u-paris.fr}
\begin{abstract}
Un corps globalement valu\'e est un corps muni d'une famille de valeurs
absolues satisfaisant \`a une formule du produit. Les corps de nombres ou
les corps de fonctions d'une variable fournissent des exemples
classiques, et fondamentaux, d'une telle structure alg\'ebrique ; la
th\'eorie de Nevanlinna permet de construire une telle structure sur le
corps des fonctions m\'eromorphes sur $\mathbf C$. Ces corps globalement
valu\'es peuvent \^etre abord\'es dans le cadre de la logique continue (pour
laquelle les pr\'edicats sont \`a valeurs r\'eelles), et une telle \'etude a \'et\'e
entreprise par Ben Yaacov et Hrushovski il y a presque 10 ans,
fournissant un cadre mod\`ele-th\'eorique pour la th\'eorie diophantienne des
hauteurs. Un des premiers r\'esultats fondamentaux de la th\'eorie affirme
que le corps des nombres alg\'ebriques, avec sa structure
(essentiellement unique) de corps globalement valu\'e, est
existentiellement clos : tout syst\`eme d'\'egalit\'es et in\'egalit\'es
polynomiales et d'in\'egalit\'es strictes entre hauteurs poss\`ede une
solution en nombres alg\'ebriques, pourvu qu'il en poss\`ede une dans une
extension globalement valu\'ee. La d\'emonstration, due \`a Szachniewicz,
s'inspire de celle propos\'ee par Ben Yaacov et Hrushovski dans le cas des
corps de fonctions: alors que cette derni\`ere utilisait de mani\`ere
cruciale la description par Boucksom, Demailly, P\u aun et Peternell du
c\^one des courbes mobiles dans une vari\'et\'e projective complexe, le cas
des corps de nombres repose sur des r\'esultats r\'ecents de th\'eorie
d'Arakelov.
\end{abstract}
\begin{document}

\maketitle

{\raggedleft
\itshape \`A la m\'emoire de Zo\'e Chatzidakis\par}

\section*{Introduction}
\phantomsection

La logique math\'ematique met en jeu des structures
math\'ematiques c'est-\`a-dire des ensembles, des fonctions, ainsi
que des pr\'edicats \`a valeur vrai/faux, assujettis \`a satisfaire
aux axiomes relatifs \`a ces structures.
Un de ses objectifs est d'\'elucider 
la g\'eom\'etrie des \emph{ensembles d\'efinissables}
que ces fonctions et pr\'edicats permettent de construire,
un autre est d'\'etudier les \emph{mod\`eles}
de ces axiomes, c'est-\`a-dire de structures math\'ematiques
o\`u ils sont v\'erifi\'es.
Elle pr\^ete \'egalement une attention particuli\`ere 
\`a la nature des \emph{formules} consid\'er\'ees, 
par exemple la pr\'esence ou la position des quantificateurs logiques.
En retour, elle fournit des th\'eor\`emes g\'en\'eraux (comme le th\'eor\`eme
de compacit\'e) et des concepts structuraux
(diverses notions de dimension, le concept d'imaginaire,
la th\'eorie de la stabilit\'e\dots) 
qui permettent d'aborder
des th\'eories math\'ematiques de fa\c{c}on originale.
N\'ee des travaux de Tarski et Robinson sur la th\'eorie des corps
alg\'ebriquement clos et des corps r\'eels clos, la th\'eorie des mod\`eles
s'est ouverte de fa\c{c}on f\'econde \`a des th\'eories moins \'el\'ementaires : 
alg\`ebre diff\'erentielle,
corps aux diff\'erences, corps valu\'es\dots

Cet expos\'e est consacr\'e au point de vue qu'offre la logique math\'ematique
sur la notion de \emph{hauteur} en g\'eom\'etrie arithm\'etique.
Sous sa forme la plus \'el\'ementaire, o\`u l'on s'int\'eresse
aux solutions enti\`eres d'\'equations polynomiales,
la hauteur correspond tout simplement \`a leur taille, 
et l'int\'er\^et de la consid\'erer appara\^{\i}t dans la m\'ethode de descente infinie 
explicit\'ee par \textcite{Fermat-1894} dans sa lettre d'ao\^ut 1659 \`a Carcavi.
Exploit\'ee par~\textcite{Mordell-1922} et~\textcite{Weil-1928},
puis \textcite{Northcott-1950}, 
c'est aujourd'hui un outil primordial de la g\'eom\'etrie diophantienne,
et \'egalement un objet de nombreuses questions ouvertes.
Cette notion admet aussi un analogue sur les corps 
de fonctions, au parfum plus g\'eom\'etrique, 
qu'en retour, la g\'eom\'etrie d'Arakelov se propose de diffuser
\`a la th\'eorie \og na\"{\i}ve\fg des hauteurs en arithm\'etique.

M\^eme sur les corps de fonctions,
les fonctions hauteurs sont a priori \`a valeurs r\'eelles.
Deux options s'offrent \`a la logique math\'ematique pour 
axiomatiser ces fonctions : les consid\'erer comme 
\`a valeurs dans un corps ordonn\'e, \'eventuellement r\'eel clos,
ou insister pour qu'elles soient \`a valeurs r\'eelles.
Dans le premier cas,
le th\'eor\`eme de compacit\'e et la construction de mod\`eles \og non standard \fg
en logique classique conduiront \`a des hauteurs \`a valeurs non standard
dans des corps non archim\'ediens, qui font perdre de vue l'intuition initiale
de la descente infinie. 
C'est donc la seconde option qu'on va suivre,
gr\^ace au point de vue offert par la \emph{logique continue}.
Une premi\`ere th\'eorie
avait \'et\'e propos\'ee dans~\parencite{ChangKeisler-1966}, mais la variante
r\'ecente de \parencite{BenYaacovBerensteinHensonEtAl-2008, BenYaacov-2008}
semble plus adapt\'ee.
Il est impossible de r\'esumer en deux phrases ces th\'eories 
mais deux images peuvent \^etre \'eclairantes. La premi\`ere consiste \`a penser
les nombres r\'eels comme des valeurs de v\'erit\'e, de sorte que
ces fonctions \`a valeurs r\'eelles apparaissent comme des pr\'edicats. 
La seconde, plus technique, observe que la topologie des
espaces de types de la logique math\'ematique classique 
sont compacts et totalement discontinus, 
et la logique continue donne lieu \`a des espaces localement compacts g\'en\'eraux.

Avant m\^eme toute logique continue, le premier chapitre de cet expos\'e
consistera \`a expliquer la notion de \emph{corps globalement valu\'e}
qui est l'\emph{axiomatisation} des hauteurs
propos\'ee par~\textcite{BenYaacovHrushovski-2022}
et d\'evelopp\'ee dans 
l'article~\parencite{BenYaacovDesticHrushovskiEtAl-2024}.
Il s'agit bien ici d'\'elucider les propri\'et\'es g\'en\'erales de ces fonctions
hauteurs qui justifieraient de les \'etudier sur un corps arbitraire.
De fait, la notion de corps globalement valu\'e peut \^etre abord\'ee
de plusieurs fa\c{c}ons, et tous ces regards se compl\`etent.
Partant d'une notion de \emph{hauteur} (d\'efinition~\ref{defi.h}) 
sur un corps~$F$,
nous d\'efinirons ensuite un $\Q$-espace vectoriel  r\'eticul\'e de \emph{diviseurs}
sur~$F$ et une forme lin\'eaire positive sur cet espace vectoriel,
puis une mesure sur espace de valeurs absolues g\'en\'eralis\'ees
donnant lieu \`a une formule du produit, d'o\`u la terminologie
de \emph{corps globalement valu\'e}.

Le r\'esultat principal
de ce chapitre est l'\'equivalence de ces points de vue.
Comme on peut le deviner, la pr\'esentation en termes de mesures
recoupe d'autres axiomatisations de la notion de hauteur,
comme les \og $M$-fields\fg de \textcite{Gubler-1997}
ou les \og courbes ad\'eliques\fg de \textcite{ChenMoriwaki-2020}.
Nous verrons aussi comment ces corps globalement valu\'es donnent une \'epaisseur
suppl\'ementaire \`a l'analogie entre approximation
diophantienne et th\'eorie de Nevanlinna propos\'ee par~\textcite{Vojta-1987}.

Dans un second chapitre, nous expliquons comment
l'on peut d\'ecrire les diff\'erentes structures de corps
globalement valu\'e dans le cas des corps~$F$ qui sont de type fini
sur un sous-corps~$k$.
Lorsqu'on impose que la structure soit \og triviale\fg 
sur ce sous-corps~$k$, le groupe des diviseurs \'evoqu\'es ci-dessus
s'identifie au groupe des diviseurs birationnels de~\textcite{Shokurov-1996}
sur un $k$-sch\'ema propre, int\`egre, normal de corps des fractions~$F$.
La description des structures de corps globalement valu\'e
est un raffinement  du th\'eor\`eme de~\textcite{BoucksomDemaillyPaunEtAl-2013}
qui d\'ecrit le c\^one pseudo-effectif d'une vari\'et\'e complexe
comme le dual du c\^one des courbes mobiles.

En particulier, les corps de fonctions d'une variable sur~$k$ poss\`edent
essentiellement une seule structure de corps globalement valu\'e
qui soit triviale sur~$k$. 

De m\^eme, les corps de nombres poss\`edent essentiellement une seule structure
de corps globalement valu\'e, donn\'ee par la th\'eorie classique des hauteurs.
Des d\'eveloppements r\'ecents en g\'eom\'etrie d'Arakelov permettent une description 
des structures de corps globalement valu\'e sur un corps~$F$
qui est de type fini sur~$\Q$ analogue \`a celle que nous avons
\'evoqu\'ee dans le cas g\'eom\'etrique. 

Dans un dernier chapitre, nous pr\'esentons la logique continue
des corps globalement valu\'es et expliquons les deux th\'eor\`emes
qui ont suscit\'e cet expos\'e.
Le premier, d\^u \`a~\textcite{BenYaacovHrushovski-2022}, affirme que
si $k$ est un corps, la cl\^oture alg\'ebrique~$K$ de $k(T)$,
munie de sa structure naturelle
(triviale sur~$k$) est  
un corps globalement valu\'e \emph{existentiellement clos}. 
Cela signifie en particulier que tout syst\`eme d'\'equations constitu\'e
d'\'equations et d'in\'equations polynomiales
\`a coefficients dans~$K$
qui admet une solution dans un corps globalement valu\'e~$L$ prolongeant~$K$
poss\`ede \'egalement une solution dans~$K$ dont les hauteurs seront arbitrairement
proches de la premi\`ere.
Le second th\'eor\`eme, du \`a~\textcite{Szachniewicz-2023},
est l'analogue pour le corps $\overline\Q$ des nombres alg\'ebriques.

C'est toute une th\'eorie qu'ouvrent ces deux th\'eor\`emes avec, pour l'instant, 
plus de questions que de r\'eponses,
et nous en dirons quelques mots dans un paragraphe final.

\medskip

Je remercie E.~Hrushovski et S.~Rideau d'avoir r\'epondu avec
tant de gentillesse \`a mes nombreuses questions, et les collaborateurs
de N.~Bourbaki de leur \oe il vigilant et n\'eanmoins amical.

\section{Quatre regards sur les corps globalement valu\'es}


\textcite{BenYaacovDesticHrushovskiEtAl-2024} proposent sept regards 
sur la notion de corps globalement valu\'e. Nous en pr\'esentons
ici quatre : hauteurs, termes locaux, fonctionnelles divisorielles
et mesures. Nous expliquerons ensuite au \S\ref{ss.gvf}
comment elles conduisent \`a des notions \'equivalentes.

\subsection{Hauteurs}

Le premier de ces regards est une axiomatisation de la notion de hauteur
en g\'eom\'etrie diophantienne.

\begin{defi}\label{defi.h}
Soit $F$ un corps. Une \emph{hauteur} sur~$F$  est
une suite d'applications $h_n\colon F^n\to \R\cup\{-\infty\}$,
pour $n\in\N$,
qui v\'erifient les propri\'et\'es suivantes:
\begin{enumerate}[\upshape (i)]
\item Pour tout~$n$, on a $h_n(x)=-\infty$ si et seulement si $x=0$;
\item Pour tout $n\geq 1$, on a $h_n(1,\dots,1)=0$;
\item Pour toute permutation $\sigma\in\mathfrak S_n$
et tout $x\in F^n$, on a $h_n(x_{\sigma(1)},\dots,x_{\sigma(n)})=h_n(x)$;
\item Pour tout $x\in F^m$ et tout $y\in F^n$, on a $h_m(x)\leq h_{m+n}(x,y)$;
\item Pour tout $x\in F^m$ et tout $y\in F^n$, on a 
$h_{mn}(x\otimes y)=h_m(x)+h_n(y)$, o\`u $x\otimes y=(x_i y_j)$;
\item Il existe un nombre r\'eel~$e$ tel que
$h_n(x+y)\leq h_{2n}(x,y)+e$ pour tout $n\in\N$ et tous $x,y\in F^n$.
\end{enumerate}
\end{defi}

On notera souvent~$h$ au lieu de~$h_n$.

On dit qu'une hauteur sur~$F$ est \emph{globale}
si $h_n(c\cdot x)=h_n(x)$ 
pour tout $n\in\N$, tout $c\in F^\times $ et tout $x\in F^n$.
Elle d\'efinit alors des fonctions fonctions de $\P_{n-1}(F)$ dans~$\R$,
\'egalement not\'ees~$h$,
qui v\'erifient les propri\'et\'es alg\'ebriques classiques des \emph{hauteurs} 
sur un corps global, \`a l'exception importante du th\'eor\`eme de finitude
de Northcott.
Pour $x=(x_1,\dots,x_n)\in F^n$, on a $h_n(x)=h([1:x_1:\dots:x_n])$.

Le nombre r\'eel~$e$ dans la condition~(vi) est n\'ecessairement positif
ou nul ; il refl\`ete la possibilit\'e de places archim\'ediennes.

\begin{exem}
La th\'eorie classique des hauteurs en g\'eom\'etrie arithm\'etique
munit le corps~$\Q$ d'une fonction hauteur.
Pour tout $n\in\N$ et tout $(x_1,\dots,x_n)\in\Q^n$, 
on pose $h_n(x_1,\dots,x_n)=-\infty$ si les~$x_i$ sont tous nuls;
sinon, on note $d$ le g\'en\'erateur strictement positif
du sous-groupe ab\'elien $\langle x_1,\dots,x_n\rangle$ de~$\Q$
et l'on pose
\[ h_n(x_1,\dots,x_n) = \sup ( \log \abs{x_1},\dots, \log \abs{x_n})/d. \]
La v\'erification des cinq premiers axiomes de la d\'efinition~\ref{defi.h} 
est \'el\'ementaire, de m\^eme que le fait qu'il s'agit d'une hauteur globale.
Pour le sixi\`eme, on peut prendre $e=\log(2)$: on se ram\`ene \`a prouver
l'in\'egalit\'e~(vi) lorsque $x, y$ sont des \'el\'ements non nuls de~$\Z^n$;
alors, 
\begin{align*}
 h_n(x+y) & \leq \log \sup (\abs{x_1+y_1},\dots,\abs{x_n+y_n}) \\
& \leq \log \sup (2\sup(\abs{x_1},\abs{y_1}), \dots ) \\
& \leq \log(2) + \log \sup (\abs{x_1},\dots,\abs{x_n}, \abs{y_1},\dots,\abs{y_n}).
\end{align*}

Soit $k$ un corps. En exploitant le fait que~$\Z$ et~$k[T]$
sont tous deux des anneaux principaux, et l'analogie
entre valeur absolue des entiers et degr\'e des polyn\^omes,
on d\'efinit de m\^eme une hauteur sur le corps~$k(T)$.
Pour tout $n\in\N$ et tout $(x_1,\dots,x_n)\in k(T)^n$, 
on pose en effet $h_n(x_1,\dots,x_n)=-\infty$ si les~$x_i$ sont tous nuls;
sinon, on choisit un g\'en\'erateur 
du sous-$k[T]$-module $\langle x_1,\dots,x_n\rangle$ de~$k(T)$
et l'on pose
\[ h_n(x_1,\dots,x_n) = \sup ( \deg(x_1/d),\dots, \deg(x_n/d)). \]
On obtient ainsi une hauteur globale sur~$k(T)$; pour l'axiome~(vi), 
on peut prendre $e=0$.

Nous dirons que ce sont les structures de hauteurs \emph{standard}
sur~$\Q$ et~$k(T)$.

La th\'eorie classique des hauteurs \'etend ces d\'efinitions
aux corps de nombres et aux corps de fonctions d'une variable, 
mais il est n\'ecessaire, pour leur d\'efinition,
de recourir \`a la classification des valeurs
absolues sur ces corps. Elles apparaissent alors comme
un cas particulier de l'exemple suivant.
Nous y reviendrons \'egalement dans l'exemple~\ref{exem-aw}.
\end{exem}

\begin{exem}
La notion de $M$-corps introduite
par \textcite{Gubler-1997} ou celle de \emph{courbe ad\'elique}
d\'egag\'ee par~\textcite{ChenMoriwaki-2020} fournissent un cadre
dans lequel interpr\'eter de fa\c{c}on uniforme la th\'eorie classique
des hauteurs. Une courbe
ad\'elique est form\'ee d'un espace mesur\'e~$(\Omega,\mu)$ 
et d'une application de~$\Omega$ dans l'ensemble~$M_F$
des valeurs absolues sur un corps~$F$, not\'ee $\omega\mapsto\abs\cdot_\omega$,
telle que pour tout $x\in F^\times$,
l'application $\omega\mapsto \log \abs{x}_\omega$ 
soit int\'egrable. La formule
\begin{equation}
 h_n(x_1,\dots,x_n) = \int_\Omega \sup (\log \abs{x_1}_\omega,\dots,
\log \abs{x_n}_\omega)\, d\mu(\omega)
\end{equation}
d\'efinit alors une hauteur sur~$F$; cette hauteur est globale
si et seulement si l'on a la \emph{formule du produit}
\begin{equation}
\int_\Omega \log \abs x_\omega\, d\mu(\omega) =0 
\end{equation}
pour tout $x\in F^\times$.
\end{exem}

Ces exemples justifient la terminologie de corps globalement valu\'e
et sugg\`erent un deuxi\`eme regard sur cette structure,
en termes d'espaces mesur\'es.

\subsection{Pseudo-valuations}
Pour repr\'esenter la hauteur comme une int\'egrale sur un espace intrins\`eque,
l'axiomatisation
propos\'ee par~\textcite{BenYaacovDesticHrushovskiEtAl-2024}
conduit \`a consid\'erer des objets un peu plus g\'en\'eraux
que des valeurs absolues: les pseudo-valuations.

Si $\Gamma$ est un groupe ordonn\'e dont la loi est not\'ee additivement
\footnote{La th\'eorie des valeurs absolues
et celle des valuations conduisent \`a un double conflit 
concernant le sens de la relation d'ordre
et le caract\`ere additif du groupe de valeurs, conflit mat\'erialis\'e par la
relation \og $v(x)=\log \abs x^{-1}$\fg entre les deux notions.
Si le formalisme de~\textcite{Huber-2013} corrige ces deux conflits,
il n'est pas non plus adapt\'e \`a la g\'eom\'etrie diophantienne
dans laquelle c'est la fonction $\log \abs{x}^{-1}$ 
qui poss\`ede les propri\'et\'es de \emph{positivit\'e} appropri\'ees 
et qu'il s'agit de \emph{sommer} ou d'int\'egrer.},
on note $\overline\Gamma$ l'ensemble ordonn\'e obtenu par adjonction
d'un plus petit et d'un plus grand \'el\'ements, 
respectivement not\'es $-\infty$ et $+\infty$.
On \'etend la loi de groupe de~$\Gamma$ en une loi partiellement d\'efinie
sur $\overline\Gamma$  en posant
$(+\infty)+a=a+(+\infty)=+\infty$ pour tout $a\neq-\infty$,
et $(-\infty)+a=a+(-\infty)=-\infty$ pour tout $a\neq +\infty$.

\begin{defi}
Soit $A$ un anneau. Une pseudo-valuation sur~$A$
est une application $v\colon A\to\overline\Gamma$ v\'erifiant les propri\'et\'es suivantes:
\begin{enumerate}[\upshape (i)]
\item On a $v(0)=+\infty$ et $v(1)= 0$;
\item On a $v(xy)=v(x)+v(y)$
pour tous $x,y\in A$ tels que $\{v(x),v(y)\}\neq\{-\infty;+\infty\}$;
\item Il existe $e\in\Gamma$ tel que $v(x+y)\geq \inf(v(x), v(y))-e$
pour tous $x,y\in A$.
\end{enumerate}
\end{defi}

Dans la condition~(iii), on a n\'ecessairement $e\geq0$.
Lorsqu'on peut prendre $e=0$, la pseudo-valuation est dite 
\emph{ultram\'etrique}. 
L'introduction de ce param\`etre permet de tenir compte
de fa\c{c}on uniforme de l'existence des valeurs absolues 
au sens de~\textcite[\S9]{Weil-1951} et \'egalement
\'etudi\'ees r\'ecemment sous le nom de \emph{pseudo-valeur absolue}
par~\textcite{Sedillot-2024},
pour qui elles peuvent prendre la valeur~$+\infty$; 
alors $\log\abs{\,\cdot\,}^{-1}$ est une pseudo-valuation.
En fait, si $v$ n'est pas ultram\'etrique,
alors il existe une pseudo-semi-valeur absolue $\abs{\,\cdot\,}$
sur~$A$ telle que $v=\log\abs{\,\cdot\,}^{-1}$.
En particulier, on peut prendre $e=-\inf(0,v(2))$.

Lorsque $v$ ne prend pas la valeur~$-\infty$, on dira
que c'est une \emph{valuation}.
Lorsque $v$ est ultram\'etrique, on retrouve effectivement les valuations
au sens classique, mais conduit \'egalement \`a appeler
valuation toute fonction de la forme $\log\abs{\, \cdot\,}^{-1}$,
o\`u $\abs{\, \cdot\,}^{-1}$ est une valeur absolue (au sens classique).

Dans le cas g\'en\'eral, l'ensemble~$A^\circ$
des \'el\'ements $a\in A$ tels que $v(a)\neq -\infty$ 
est un sous-anneau de~$A$ sur lequel $v$ d\'efinit une valuation
(au sens de ce texte);
en quelque sorte, 
les pseudo-valuations sont aux valuations ce que les places
\parencite[chapitre~VI, \S2, \no2]{Bourbaki-2006d}
sont aux morphismes d'anneaux.

On dit qu'une pseudo-valuation est \emph{triviale} si elle
ne prend que les valeurs~$-\infty$, $0$ et~$+\infty$.

Dans la suite de ce texte, on s'int\'eressera essentiellement aux
pseudo-valuations \emph{non triviales} \`a valeurs r\'eelles sur un corps~$F$.

On munit l'espace $\ovR$ de sa topologie usuelle
et l'espace~$\ovR^F$ de la topologie de la convergence simple;
ils sont compacts.
On note $\Val(F)$ et $\PVal(F)$ les sous-espaces de~$\ovR^F$
form\'es des valuations et des pseudo-valuations,
ainsi que $\Val(F)_*$ et $\PVal(F)_*$ les sous-espaces
des valuations et pseudo-valuations non triviales.
Pour des raisons topologiques dont l'int\'er\^et appara\^{\i}tra plus tard, 
on introduit \'egalement la partie $\QVal(F)$ de~$\ovR^F$ d\'efinie comme
l'adh\'erence de $\PVal(F)_*$ priv\'ee de l'ensemble (ferm\'e)
des fonctions ne prenant
que les valeurs~$-\infty,0,+\infty$; elle est localement compacte
et $\PVal(F)_*$ en est un sous-espace dense.

\subsection{Termes locaux}

On appelle \emph{polyn\^ome tropical} tout terme
dans le langage des $\Q$-espaces vectoriels ordonn\'es.
Autrement dit, un polyn\^ome tropical est d\'efini par r\'ecurrence
\`a partir des constructions suivantes:
\begin{itemize}
\item Tout symbole de variable d\'efinit un polyn\^ome tropical;
\item Si $t$ et $t'$ sont des polyn\^omes tropicaux,
alors $\sup(t,t')$ et $t+t'$ sont des polyn\^omes tropicaux;
\item Si $t$ est un polyn\^ome tropical et $c$ est un nombre
rationnel, $c\cdot t$ est un polyn\^ome tropical.
\end{itemize}

Si $V$ est un ensemble de symboles de variables,
on note $\mathscr T_V$ l'ensemble des polyn\^omes tropicaux
dont les variables appartiennent \`a~$V$.
(Lorsque $V=\{x_1,\dots,x_n\}$, on \'ecrit plus simplement $\mathscr T_n$.)
Pour tout $\Q$-espace vectoriel ordonn\'e~$\Gamma$
et tout $a\in\Gamma^V$, un polyn\^ome tropical $t\in \mathscr T_V$
d\'efinit une application d'\'evaluation de $\Gamma^V$ dans~$\Gamma$.
On identifie deux polyn\^omes tropicaux \`a la classe de leurs \'evaluations;
ils forment alors un $\Q$-espace vectoriel.

\begin{defi}\label{defi.tl}
Soit $F$ un corps. Une structure de \emph{termes locaux} sur~$F$
est la donn\'ee, pour tout ensemble~$V$ de symboles de variables
et tout terme $t\in\mathscr T_V$, d'une application $R_t\colon (F^\times)^V\to\R$, v\'erifiant les conditions suivantes:
\begin{enumerate}[\upshape (i)]
\item Si deux polyn\^omes tropicaux $t, t'\in\mathscr T_V$ se d\'eduisent 
l'un de l'autre par une permutation~$\sigma$ de~$V$, 
il en est de m\^eme des applications~$R_t$ et~$R_{t'}$;
\item Si un symbole de variable de~$V$ n'appara\^{\i}t pas dans un terme $t\in\mathscr T_V$, l'application~$R_t$ ne d\'epend pas de la coordonn\'ee correspondante;
\item L'application $t\mapsto R_t$ est $\Q$-lin\'eaire;
\item Si $t\in\mathscr T_V$ et si $a\in (F^\times)^V$ v\'erifie
$t(v(a_1),\dots,v(a_n))\geq0$ pour tout $v\in\QVal(F)$,
alors $R_t(a_1,\dots,a_n)\geq 0$.
\end{enumerate}
\end{defi}

Une structure de termes locaux sur un corps~$F$ est dite \emph{globale}
si le terme $t=x$ en une variable~$x$ v\'erifie la relation
\begin{equation}
R_x = 0.
\end{equation}

Nous avons vu qu'une structure de courbe ad\'elique sur un corps~$F$
donne lieu \`a une hauteur; 
elle fournit \'egalement des termes locaux: 
pour tout terme $t\in\mathscr T_V$
et tout $a\in (F^\times)^V$, on pose
\begin{equation}\label{eq.ca-tl}
 R_t(a) = \int_\Omega t((\log \abs{a_v}_\omega)_v)\, d\mu(\omega). 
\end{equation}
Lorsque $F$ v\'erifie la formule du produit, 
cette structure de termes locaux est globale.

Plus g\'en\'eralement, 
la formule
\begin{equation} \label{eq.tl-h}
h(a_1,\dots,a_n) =  R_t(a_1,\dots,a_n), \qquad t = \sup(x_1,\dots,x_n)
\end{equation}
montre comment une structure de termes locaux fournit naturellement 
une structure de hauteur, qui est globale, 
si celle de termes locaux l'est.

Pour construire, inversement, une structure de courbe ad\'elique
associ\'ee \`a une structure de termes locaux (ou \`a une structure de hauteur),
il faut, d'une part, identifier un espace mesur\'e sous-jacent naturel
et, d'autre part, identifier la forme lin\'eaire correspondant \`a l'int\'egrale.
Nous \'etudions d'abord cette seconde question.

\subsection{Fonctionnelles divisorielles}

Soit $F$ un corps. 

Nous allons d\'efinir maintenant un $\Q$-espace vectoriel ordonn\'e $\Div_\Q(F)$,
et m\^eme r\'eticul\'e, c'est-\`a-dire que deux \'el\'ements
quelconques poss\`edent une borne sup\'erieure et une borne inf\'erieure 
\parencite[chapitre~VI, \S1, \no9]{Bourbaki-2007l}, que nous appelerons le groupe des \emph{diviseurs} sur~$F$.
L'origine de cette terminologie appara\^{\i}tra au chapitre suivant
lorsque nous ferons le lien entre une variante relative de ce groupe
et la notion de diviseurs birationnels introduite par \textcite{Shokurov-1996}.
Ce groupe est en outre
muni d'un morphisme de groupes $\div\colon F^\times \to\Div_\Q(F)$
dont l'image engendre $\Div_\Q(F)$ comme $\Q$-espace vectoriel r\'eticul\'e.
Cette derni\`ere propri\'et\'e sera (peut-\^etre) famili\`ere aux lecteur\textperiodcentered ices 
au fait de la th\'eorie \'el\'ementaire des hauteurs, 
dans laquelle la construction des hauteurs na\"{\i}ves
associ\'ees \`a  un diviseur arbitraire
ne fait effectivement intervenir que des combinaisons lin\'eaires
et des bornes sup\'erieures.

La construction de $\Div_\Q(F)$ est formelle, et
d\'ebute par la consid\'eration du $\Q$-espace vectoriel
r\'eticul\'e universel $U_\Q(F)$ associ\'e au groupe~$F^\times$,
lui-m\^eme construit \`a partir du mono\"{\i}de r\'eticul\'e universel
(l'ensemble des parties finies de~$F^\times$,
l'addition correspondant au produit des parties
et l'op\'eration $\sup$ \`a la r\'eunion), 
puis adjonction de diff\'erences et tensorisation par~$\Q$.
Pour tout $a\in F^\times$, soit $\div(a)$ l'\'el\'ement de~$U_\Q(F)$
image de la partie $\{a\}$ de~$F^\times$ du mono\"{\i}de r\'eticul\'e universel;
cela fournit un morphisme de groupes $\div\colon F^\times\to U_\Q(F)$
dont l'image engendre~$U_\Q(F)$:
\`a multiplication par un nombre rationnel pr\`es, tout \'el\'ement~$\gamma$
de~$U_\Q(F)$ peut \^etre repr\'esent\'e sous la forme
\[ \gamma = \inf(\div(a_1),\dots,\div(a_m))- \inf(\div(b_1),\dots,\div(b_m)), \]
o\`u $a_1,\dots,a_m,b_1,\dots,b_m\in F^\times$.

Si $v$ est une valuation sur~$F$ dont le groupe de valeurs~$\Gamma$
est un $\Q$-espace vectoriel totalement ordonn\'e, il existe
une unique application $\Q$-lin\'eaire de~$U_\Q(F)$ dans~$\Gamma$,
encore not\'ee~$v$,
qui applique $\div(a)$ sur~$v(a)$ pour tout $a\in F^\times$
et le sup de deux \'el\'ements sur le sup de leurs images.
Lorsque $v$ est seulement une pseudo-valuation \`a valeurs
dans un $\Q$-espace vectoriel totalement ordonn\'e,
on peut d\'efinir
une application de $U_\Q(F)$ dans~$\overline\Gamma$:
l'\'el\'ement~$\gamma$ ci-dessus est appliqu\'e sur 
\[ v(\gamma) = \sup_i \inf_j v(a_i/b_j). \]
Pour $\gamma,\delta\in U_\Q(F)$,
on a alors $v(\sup(\gamma,\delta))=\sup(v(\gamma), v(\delta))$,
ainsi que $v(\gamma+\delta)=v(\gamma)+v(\delta)$ si le second terme est d\'efini.

Pour tout $\gamma\in U_\Q(F)$, l'application de $\PVal(F)$ dans~$\ovR$
donn\'ee par $v\mapsto v(\gamma)$ est continue et se prolonge par continuit\'e
\`a $\QVal(F)$.

\begin{defi}
On d\'efinit le groupe $\Div_\Q(F)$ des diviseurs de~$F$ 
comme le quotient de $U_\Q(F)$
par l'id\'eal des \'el\'ements~$\gamma$ de~$U_\Q(F)$ tels que $v(\gamma)=0$
pour tout $v\in\PVal(F)_*$.
\end{defi}

Pour $a\in F^\times$, l'image de $\div(a)$ est encore not\'ee $\div(a)$;
ces diviseurs sont dits \emph{principaux}.

Par construction, pour tout $v\in\PVal(F)_*$,
l'application $\gamma\mapsto v(\gamma)$ de~$U_\Q(F)$ dans~$\ovR$
d\'efinit une application additive de~$\Div_\Q(F)$ dans~$\ovR$.
Par passage \`a la limite, cela s'\'etend \`a tout~$v\in \QVal(F)$.
Pour tout $\gamma\in\Div_\Q(F)$, 
l'application de $\QVal(F)$ dans~$\ovR$ 
donn\'ee par $v\mapsto v(\gamma)$ est continue.

On dit qu'un diviseur~$\gamma$ est \emph{effectif} si $v(\gamma)\geq0$
pour tout $v\in\PVal(F)_*$ ;
par continuit\'e, cela vaut alors pour tout $v\in\QVal(F)$. 
Les diviseurs effectifs forment un c\^one $\Div_\Q^+(F)$ de~$\Div_\Q(F)$.

Pour tout nombre r\'eel~$c>0$, soit $\Gamma_c$ l'ensemble des
\'el\'ements de~$U_\Q(F)$ de la forme
\[ \inf_{1\leq i \leq m} \div (\sum_{j=1}^n c_{ij} b_{j})
 - \inf_{1\leq j \leq n} \div (b_j), \]
pour $m,n\geq 1$, $b_1,\dots,b_n\in F^\times$, 
et $(c_{ij})$ est une famille d'entiers relatifs tels que 
$\abs{\sum_{j=1}^n c_{ij}} <2^c$ pour tout $i\in\{1,\dots,m\}$.
\textcite[corollaire~5.17]{BenYaacovDesticHrushovskiEtAl-2024}
d\'emontrent la caract\'erisation suivante des \'el\'ements 
de~$U_\Q(F)$ dont l'image dans~$\Div_\Q(F)$ est effective.
\begin{prop}\label{prop.UQF-pos}
Soit $\alpha\in U_\Q(F)$.
Les conditions suivantes sont \'equivalentes:
\begin{enumerate}[\upshape (i)]
\item Pour tout $v\in \PVal(F)_*$, on a $v(\alpha)\geq 0$;
\item Pour tout $v\in \QVal(F)$, on a $v(\alpha)\geq 0$;
\item Pour tout nombre r\'eel~$c>0$, il existe un entier~$p\geq 1$
tel que $p\alpha\in \Gamma_{pc}$
\end{enumerate}
\end{prop}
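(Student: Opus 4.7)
The plan is to run the cycle (ii) $\Rightarrow$ (i) $\Rightarrow$ (iii) $\Rightarrow$ (ii), with the first two implications being relatively direct and the last the substantive one.

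For the equivalence (i) $\Leftrightarrow$ (ii): the implication (ii) $\Rightarrow$ (i) is immediate from the inclusion $\PVal(F)_* \subset \QVal(F)$. For the converse, recall that the function $v\mapsto v(\alpha)$, defined on $U_\Q(F)$, extends by continuity to all of $\QVal(F)$; since $\PVal(F)_*$ is dense in $\QVal(F)$, the non-negativity in (i) propagates to (ii) by continuity of this evaluation map.

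For (iii) $\Rightarrow$ (i), I would fix $v\in\PVal(F)_*$ with triangle constant $e_v\geq 0$ and, for each $c>0$, write $p\alpha$ in the form prescribed by (iii), with $|\sum_j c_{ij}|<2^{pc}$. Applying the subadditivity axiom of $v$ iteratively to the combinations $\sum_j c_{ij}b_j$, together with the bound $v(n)\geq -e_v\log_2|n|$ for $n\in\Z$ coming from $v(1)=0$ and the triangle inequality, one obtains an estimate of the shape $v(p\alpha)\geq -C\,e_v\cdot pc$ for an absolute constant $C$; when $v$ is ultrametric, $e_v=0$ and the inequality $v(p\alpha)\geq 0$ already holds. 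Dividing by $p$ and letting $c\to 0^+$ yields $v(\alpha)\geq 0$.

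The bulk of the work lies in (i) $\Rightarrow$ (iii), which has the flavour of a Positivstellensatz. Arguing by contraposition, I would suppose there exists $c>0$ such that $p\alpha\notin\Gamma_{pc}$ for every $p\geq 1$. Inside the $\R$-vector space $U_\Q(F)\otimes_\Q\R$, the convex cone generated by $\bigcup_p \frac1p \Gamma_{pc}$ then fails to contain $\alpha$, and a Hahn-Banach separation argument produces a $\Q$-linear form $\ell$ on $U_\Q(F)$ which is non-negative on every $\Gamma_{p'c}/p'$ but satisfies $\ell(\alpha)<0$. The crucial point is that the calibration $|\sum_j c_{ij}|<2^c$ in the definition of $\Gamma_c$ is designed precisely so that the non-negativity of $\ell$ on $\Gamma_c$ forces an inequality of the shape $\ell(\div(x+y))\geq \inf(\ell(\div x),\ell(\div y))-c$, i.e.\ the subadditivity axiom of a pseudo-valuation with triangle constant at most $c$; combined with the multiplicativity built into $U_\Q(F)$ through its universal property, the form $\ell$ gives rise to a pseudo-valuation $v$ on $F$ with $v(\alpha)<0$, contradicting (i).

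The hardest step, and the one I expect to be the main obstacle, is this final reconstruction: passing cleanly from a separating linear functional $\ell$ to a genuine pseudo-valuation in $\PVal(F)_*$. One must simultaneously handle the partially defined arithmetic on $\overline\Gamma$, verify that the $v$ so produced extends to all of $F$ and is non-trivial, and interpret the constant $c$ inherited from the separation as a legitimate triangle constant. It is precisely to absorb this process of passage to the closure that $\QVal(F)$, rather than only $\PVal(F)_*$, enters condition~(ii); this also explains why the final contradiction is phrased against (i) after one has first established (i) $\Leftrightarrow$ (ii).
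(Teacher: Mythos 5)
Your treatment of (i)$\Leftrightarrow$(ii) (density of $\PVal(F)_*$ in $\QVal(F)$ plus continuity of $v\mapsto v(\alpha)$) and of (iii)$\Rightarrow$(i) (estimate $v\bigl(\sum_j c_{ij}b_j\bigr)\geq\inf_j v(b_j)-pc\,e_v$, divide by $p$, let $c\to0$) coincides with the paper's argument. The problem is the hard direction, where your route diverges from the paper's and, as you yourself suspect, breaks down at the reconstruction step.

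The gap is this: a Hahn--Banach separation can only produce a \emph{linear} functional $\ell$ on $U_\Q(F)$ that is non-negative on the cone and negative at $\alpha$. But to contradict (i) you need an element of $\PVal(F)_*$, i.e.\ (after passing to $U_\Q(F)$) a map that is a \emph{lattice homomorphism}: $v(\sup(\gamma,\delta))=\sup(v(\gamma),v(\delta))$. A positive linear functional does not commute with $\inf$ and $\sup$; in particular, positivity of $\ell$ on $\Gamma_c$ yields inequalities of the form $\ell\bigl(\inf_i\div(\textstyle\sum_j c_{ij}b_j)\bigr)\geq\ell\bigl(\inf_j\div(b_j)\bigr)$, in which $\ell$ is applied to lattice infima inside $U_\Q(F)$ --- this is \emph{not} the inequality $\ell(\div(x+y))\geq\inf(\ell(\div x),\ell(\div y))-c$ you want, and there is no way to extract it without already knowing $\ell$ respects the lattice structure. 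The functionals produced by separation are precisely the ``divisorial functionals'' of the following subsection, not pseudo-valuations; upgrading one to the other is the entire content of the proposition, so the argument is circular at exactly the decisive point. A secondary unaddressed issue: separation requires $\alpha$ to lie outside the \emph{convex cone generated by} $\bigcup_p\frac1p\Gamma_{pc}$, which is strictly stronger than the hypothesis $\alpha\notin\frac1p\Gamma_{pc}$ for all $p$, and no convexity of this union is established.

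For comparison, the paper proves (ii)$\Rightarrow$(iii) directly, not by separation. After reducing to $\alpha=-\inf(\div(a_1),\dots,\div(a_n))$, it equips $\Z[T_1,\dots,T_n]$ with the weighted norm $\norm f=\sum_k\abs{f_k}2^{-\abs k c}$ and shows that if no $f$ with $f(a)=1$ and $\norm f<1$ exists, then the induced sub-multiplicative pseudo-seminorm on $F$ dominates, by Zorn's lemma, a \emph{minimal} one, which is then multiplicative --- this Gelfand-type step is what actually manufactures a pseudo-valuation with $v(\alpha)<0$. A further argument (convex subgroups of $F^\times/R^\times$) is needed when that pseudo-valuation is trivial, a degenerate case your sketch does not contemplate. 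Finally, the existence of such an $f$ only gives $\alpha\in\Gamma_d$ for a fixed $d$ depending on $\deg f$; the asymptotic statement $p\alpha\in\Gamma_{pc}$ requires the additional step of raising $f$ to powers $f^m$ and letting $m\to\infty$, which also has no counterpart in your proposal.
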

\begin{proof}
L'\'equivalence (i)$\Leftrightarrow$(ii) d\'ecoule de la densit\'e
de~$\PVal(F)$ dans~$\QVal(F)$.

\smallskip
D\'emontrons l'implication~(iii)$\Rightarrow$(i). Soit $c>0$.
Soit $p$ un entier~$\geq 1$,
soit $b_1,\dots,b_n$ des \'el\'ements de~$F^\times$, 
et soit $(c_{ij})$ des entiers relatifs v\'erifiant 
$\abs{\sum_{j=1}^n c_{ij}} <2^{pc}$ pour tout $i\in\{1,\dots,m\}$,
tels que l'on ait
\[ p\alpha = \inf_{1\leq i \leq m} \div (\sum_{j=1}^n c_{ij} b_{j})
 - \inf_{1\leq j \leq n} \div (b_j)\ ; \]
pour tout~$i$, posons $a_i=\sum_{j=1}^n c_{ij}b_j$.
Soit $v$ une pseudo-valuation sur~$F$. 
Pour tout~$i$, on a $v(a_i) \geq \inf_j v(b_j) - pc v(2)$.
par suite
\[ v(p\alpha) = \inf_i \sup_j v(a_i/b_j) \geq - pc v(2). \]
Puisque $v(p\alpha)=p\alpha$, on en d\'eduit $v(\alpha)\geq -cv(2)$,
d'o\`u $v(\alpha)\geq0$ puisque $c$ est arbitraire.

\smallskip

V\'erifions l'implication (ii)$\Rightarrow$(iii).
On se ram\`ene au cas o\`u il existe $a_1,\dots,a_n\in F^\times$
tel que
$\alpha=-\inf(\div(a_1),\dots,\div(a_n))$.
Munissons l'anneau $\Z[T_1,\dots,T_n]$ de la norme donn\'ee par
\[ \norm f = \sum_k \abs{f_k} 2^{-\abs k c} \qquad (f = \sum f_k T^k). \]
D\'emontrons d'abord qu'il existe $f\in \Z[T]$ tel que $\norm f<1$
et $f(a)=1$; raisonnons par l'absurde.
Pour tout $x\in F$, soit $\norm x$ la borne inf\'erieure
des $\norm f$, pour $f\in \Z[T]$ tel que $f(a)=x$;
en particulier, $\norm x=+\infty$ si et seulement si $x$ n'est
pas de la forme~$f(a)$, pour $f\in\Z[T]$. 
Par hypoth\`ese, on a $\norm 1=1$;
les in\'egalit\'es $\norm{x+y}\leq \norm x+\norm y$ et $\norm {xy}\leq\norm x\,\norm y$ d\'ecoulent de la d\'efinition.
Ainsi, $\norm\cdot$ est une \og pseudo-seminorme\fg sous-multiplicative sur~$F$.
Le lemme de Zorn entra\^{\i}ne l'existence d'une pseudo-seminorme sous-multiplicative \emph{minimale}~$\abs\cdot$ sur~$F$ qui est major\'ee par~$F$.
On d\'emontre qu'une telle pseudo-seminorme est en fait multiplicative,
de sorte que $v=-\log\norm\cdot$ est une pseudo-valuation sur~$F$,
\'eventuellement triviale.
Puisque $\norm{T_i}=2^{-c}$, on a $\abs {a_i}\leq \norm{T_i}\leq 2^{-c}<1$
pour tout~$i$, c'est-\`a-dire $v(a_i)>c\log(2)>0$.
Ainsi, $v(\alpha)=-\inf(v(a_i))\leq - c\log(2)$.

Compte tenu de l'hypoth\`ese~(ii), la pseudo-valuation~$v$ est triviale.
Pour tout~$i$,
la minoration $v(a_i)>0$ entra\^{\i}ne que $v(a_i)=+\infty$ ;
en particulier $a_i$ appartient \`a l'anneau de valuation~$R$ de~$v$,
mais n'y est pas inversible.
Soit $\Delta$ le groupe ordonn\'e~$F^\times/R^\times$ (not\'e additivement)
et soit $w\colon F\to \overline{\Delta}$ 
l'application canonique ; c'est une pseudo-valuation.
Posons $\gamma=\inf(w(a_1),\dots,w(a_n))$;
on a $\gamma>0$.
Soit $\Delta_1$ le plus petit sous-groupe convexe de~$\Delta$
qui contient~$\gamma$ et soit $\Delta_0$
le plus grand sous-groupe convexe de~$\Delta_1$
qui ne contient pas~$\gamma$. On d\'eduit de~$w$
une pseudo-valuation~$w'$ \`a valeurs dans~$\Delta_1/\Delta_0$:
pour $x\in F$, on a
$w'(x)=+\infty$ si $w(x)>\Delta_1$,
$w'(x)=-\infty$ si $w(x)<\Delta_1$,
et $w'(x)$ est la classe de~$w(x)\in\Delta_1$ sinon.
Comme le groupe ordonn\'e~$\Delta_1/\Delta_0$ se plonge dans~$\R$, 
on d\'eduit de~$w'$ une pseudo-valuation \`a valeurs r\'eelles sur~$F$,
qui n'est pas triviale et telle que
$w'(a_i)\geq\gamma>0$ pour tout~$i$.  Par suite, 
$w'(\alpha)=-\inf(w'(a_i))\leq-\gamma<0$, ce qui contredit l'hypoth\`ese~(i).

Consid\'erons donc un polyn\^ome $f\in\Z[T_1,\dots,T_n]$
tel que $f(a_1,\dots,a_n)=1$ et $\norm f<1$.
Soit $P$ l'ensemble des polyn\^omes $g\in\Z[T_1,\dots,T_n]$
de degr\'e $\leq \deg(f)$ tel que $\norm g<1$ et $g(a)\neq0$.
On a donc 
\[ \alpha = - \sup_{1\leq i\leq n} \div(a_i)
 = \inf_{g\in P} \div(g(a)) 
  - \inf_{1\leq i\leq n} \inf_{g\in P} \div (a_i g(a)). \]
On d\'eduit alors de cette relation
que $\alpha\in\Gamma_{d}$, o\`u $ 2^d = (n+2^c+2)\deg(f)$.

Cette inclusion n'est cependant pas suffisante pour \'etablir~(iii).
On consid\`ere alors, pour tout entier~$m\geq 1$ l'ensemble~$H_m$
des mon\^omes de degr\'e~$m$ de $\Z[T_1,\dots,T_n]$; 
on a $\Card(H_m)=\binom{n+m-1}{n-1}$, et 
\[ m\alpha = - \inf _{h\in H_m} \div(h(a)). \]
En \'ecrivant $f^m(a)=0$, on obtient un polyn\^ome~$g$
de degr\'e~$m\deg(f)$ en des ind\'etermin\'ees~$U_h$, pour $h\in H_m$,
telles que $g( (h(a))_{h\in H_m})=1$ et $\norm g<1$.
L'argument pr\'ec\'edent d\'emontre que $m\alpha\in\Gamma_{d}$,
o\`u 
$ 2^{d} = (N+2^c+2) m \deg(f)$.
Lorsque $m$ tend vers l'infini, $d/m$ tend vers~$0$, ce qui conclut
la preuve de l'implication~(ii)$\Rightarrow$(iii).
\end{proof}

\begin{defi}
Une \emph{fonctionnelle divisorielle} sur~$F$ est une application $\Q$-lin\'eaire
de $\Div_\Q(F)$ dans~$\R$ qui est positive sur le c\^one $\Div_\Q^+(F)$.
\end{defi}

Une fonctionnelle divisorielle sur~$F$ est dite \emph{globale}
si elle s'annule sur les diviseurs principaux.

\begin{prop}\label{prop.fd-h}
Si $\lambda$ est une fonctionnelle divisorielle sur~$F$,
la formule
\[ h_n(x_1,\dots,x_n) = -\lambda (\inf(\div(x_1),\dots,\div(x_n))) \]
d\'efinit une hauteur sur~$F$.
Cette relation induit une bijection entre fonctionnelles
divisorielles et hauteurs, par laquelle
fonctionnelles globales et hauteurs globales se correspondent.
\end{prop}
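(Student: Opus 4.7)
Dans le sens direct, partant d'une fonctionnelle divisorielle $\lambda$, on pose $h_n$ comme dans l'\'enonc\'e (avec la convention que $\div(0)=+\infty$, de sorte que l'infimum ignore les coordonn\'ees nulles et que $h_n$ vaille $-\infty$ exactement sur le vecteur nul). La v\'erification des axiomes (i)--(v) de la D\'efinition~\ref{defi.h} est formelle : (iii) r\'esulte de la sym\'etrie de l'infimum, (ii) de $\div(1)=0$, (v) de l'identit\'e $\inf_{i,j}\div(x_iy_j)=\inf_i\div(x_i)+\inf_j\div(y_j)$ dans le groupe r\'eticul\'e $\Div_\Q(F)$, et (iv) de ce que $\inf(\div(x))-\inf(\div(x),\div(y))$ est effectif (l'infimum d\'ecro\^{\i}t en ajoutant des arguments) combin\'e \`a la positivit\'e de $\lambda$. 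Le point non formel est l'axiome archim\'edien (vi) : en posant $\delta=\inf(\div(2),0)\in\Div_\Q(F)$, j'\'etablirai l'effectivit\'e du diviseur $\inf_i\div(x_i+y_i)-\inf(\div(x),\div(y))-\delta$ en v\'erifiant, sur chaque $v\in\PVal(F)_*$, l'in\'egalit\'e $v(a+b)\geq\inf(v(a),v(b))+\min(v(2),0)$ (triviale dans le cas ultram\'etrique, cons\'equence de $|a+b|\leq 2\max(|a|,|b|)$ sinon). On en d\'eduit (vi) avec la constante positive $e=-\lambda(\delta)$.

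Pour la r\'eciproque, \'etant donn\'ee une hauteur $h$, on souhaite d\'efinir $\lambda$ par $\lambda(\inf_i\div(a_i))=-h_m(a_1,\dots,a_m)$ puis \'etendre $\Q$-lin\'eairement \`a $U_\Q(F)$. La principale difficult\'e sera d'\'etablir simultan\'ement la bonne d\'efinition (factorisation par $U_\Q(F)\twoheadrightarrow\Div_\Q(F)$) et la positivit\'e sur le c\^one effectif. Ces deux \'enonc\'es se ram\`enent \`a la seule positivit\'e $\lambda_0(\alpha)\geq 0$ pour tout $\alpha\in U_\Q(F)$ d'image effective dans $\Div_\Q(F)$ (appliqu\'ee \`a $\pm\alpha$), et c'est ici qu'intervient de fa\c{c}on cruciale la Proposition~\ref{prop.UQF-pos} : pour tout $c>0$, il existe un entier $p\geq 1$ tel que $p\alpha$ s'\'ecrive $\inf_i\div(\sum_jc_{ij}b_j)-\inf_j\div(b_j)$ avec les contraintes indiqu\'ees sur les coefficients entiers $c_{ij}$. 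L'it\'eration appropri\'ee des axiomes multiplicatif (v) et archim\'edien (vi) de la hauteur~$h$ permet alors de majorer $h_m((\sum_jc_{ij}b_j)_i)-h_n((b_j)_j)$ par une quantit\'e de la forme $pc\cdot K$, o\`u $K$ ne d\'epend que de la constante archim\'edienne de~$h$ ; apr\`es division par $p$ et passage \`a la limite $c\to 0$, il vient $\lambda_0(\alpha)\geq 0$.

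La positivit\'e de $\lambda_0$ sur le c\^one effectif de $U_\Q(F)$ entra\^{\i}ne son annulation sur le noyau de $U_\Q(F)\to\Div_\Q(F)$, d'o\`u, par passage au quotient, la fonctionnelle divisorielle $\lambda$ recherch\'ee. Les deux constructions $\lambda\mapsto h$ et $h\mapsto\lambda$ sont inverses l'une de l'autre par d\'efinition, puisque leurs compositions co\"{\i}ncident sur les g\'en\'erateurs $\inf_i\div(a_i)$. Enfin, la correspondance globale r\'esulte imm\'ediatement de l'identit\'e $h_n(cx_1,\dots,cx_n)=-\lambda(\div(c)+\inf_i\div(x_i))=-\lambda(\div(c))+h_n(x_1,\dots,x_n)$ pour $c\in F^\times$ : la hauteur $h$ est invariante par multiplication scalaire si et seulement si $\lambda$ s'annule sur les diviseurs principaux.
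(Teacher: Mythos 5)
Votre démonstration est correcte et suit essentiellement la même route que celle du texte : pour le sens direct, l'effectivité du diviseur $\inf_i\div(x_i+y_i)-\inf(\div(x),\div(y))-\inf(\div(2),0)$ testée sur les pseudo-valuations (votre signe devant $\inf(\div(2),0)$ est d'ailleurs le bon), et pour la réciproque, l'extension unique de $-h$ aux générateurs de $U_\Q(F)$ suivie de la réduction de la bonne définition et de la positivité à la caractérisation (iii) de la proposition~\ref{prop.UQF-pos}. Votre esquisse de la majoration en $pc\cdot K$ via les axiomes (v) et (vi) est même un peu plus explicite que le texte sur ce dernier point.
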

\begin{proof}
Le seul point non \'evident est le comportement de~$h_n$ par rapport
\`a l'addition.
On remarque tout d'abord que pour $x,y\in F^\times$ tels que $x+y\neq0$,
le diviseur $\div(x+y)-\inf(\div(x), \div(y))+ \inf(\div(2),0)$
est effectif: en effet, pour toute pseudo-valuation $v\in\PVal(F)_*$,
et, par passage \`a la limite, pour toute $v\in\QVal(F)$, 
on a $v(x+y) \geq \inf(v(x),v(y))-\inf(v(2),0)$. 
Par suite, pour tous $x,y\in F^n$, on a
\begin{multline*}
 \inf(\div(x_1+y_1),\dots,\div(x_n+y_n)) \\
 \geq \inf(\div(x_1),\dots,\div(x_n),\div(y_1),\dots,\div(y_n))
 - \inf (v(2), 0). \end{multline*}
Puisque $\lambda$ est une fonctionnelle divisorielle, on en d\'eduit
\begin{align*}
 h(x+y) & = - \lambda (\inf(\div(x+y))) \\
& \leq -\lambda (\inf(\div(x),\div(y))) + \lambda(\inf(\div(2),0)) \\
& = h(x,y) + h(2,1). \end{align*}

Inversement, soit $h$ une hauteur sur~$F$.
Comme $\Div_\Q(F)$ est engendr\'e par les \'el\'ements
de la forme $\inf(\div(x_1),\dots,\div(x_n))$, 
on voit que~$h$ ne peut \^etre associ\'ee qu'\`a au plus
une fonctionnelle divisorielle. Pour d\'emontrer l'existence
d'une telle fonctionnelle,
on remarque qu'il existe
un unique morphisme de groupes~$\lambda$ de $U_\Q(F)$ dans~$\R$
tel que $\lambda(\inf(\div(x_1),\dots,\div(x_n)))=-h(x_1,\dots,x_n)$,
et il suffit de prouver que $\lambda$ est positive
en tout $\gamma\in U_\Q(F)$ tel que $v(\gamma)\geq0$ pour tout $v\in\QVal(F)$.
On utilise pour cela la caract\'erisation de la proposition~\ref{prop.UQF-pos}.
\end{proof}

\subsection{Mesures}\label{ss.mesures}

On a vu comment la notion de courbe ad\'elique de~\textcite{ChenMoriwaki-2020}
permet de d\'efinir des stuctures de hauteurs, de termes locaux, 
ou des fonctionnelles divisorielles. 
Inversement, pour montrer que ces structures
fournissent un objet analogue  \`a une courbe ad\'elique,
il faut leur associer un espace \og canonique\fg et, 
pour autant que l'on souhaite
construire une mesure via le th\'eor\`eme de Riesz, 
assurer qu'il soit localement compact et 
munir l'espace de ses fonctions continues \`a support compact
d'une forme lin\'eaire positive.

Compte tenu de la d\'efinition d'une fonctionnelle divisorielle
associ\'ee \`a une courbe ad\'elique, l'espace~$\Val(F)_*$
des valuations r\'eelles non triviales sur~$F$ semble un candidat naturel
mais il ne convient pas vraiment,
pas plus que ses variantes $\PVal(F)_*$ ou $ \QVal(F)$:
il faudrait plut\^ot consid\'erer leurs quotients par la relation
d'\'equivalence induite par l'action de~$\R_+^*$ qui identifie une valuation
\`a ses multiples par un nombre r\'eel strictement positif.
Comme cette action n'est en g\'en\'eral pas propre,
l'espace quotient $\PVal(F)_*/\R_+^*$ n'est pas s\'epar\'e
\parencite[remarque~2.12]{BenYaacovDesticHrushovskiEtAl-2024}.

D'autre part, si les \'el\'ements de~$\Div_\Q(F)$ donnent lieu
\`a des fonctions continues sur~$\QVal(F)$, ces fonctions
sont \emph{homog\`enes} de degr\'e~$1$ sous l'action de~$\R_+^*$.

Pour une repr\'esentation \og mesur\'ee\fg des structures de hauteur ou
de fonctionnelles divisorielles,
on est donc amen\'e 
\`a introduire des conditions explicites de normalisation
sur l'espace~$\QVal(F)$.

Pour tout \'el\'ement positif de~$\alpha\in U_\Q(F)$,
identifi\'e \`a son image dans~$\Div^+_\Q(F)$, 
on note $\Omega^1_\alpha$ le sous-espace
de $\QVal(F)$ form\'e des~$v$ tels que $v(\alpha)=1$;
par exemple, si $\alpha=\inf(\div(a_1),\dots,\div(a_n))$,
cette condition s'\'ecrit $\inf(v(a_1),\dots,v(a_n))=1$.
C'est un espace \emph{compact} : c'est la trace,
sur le sous-espace ferm\'e de l'espace compact~$\ovR^F$
d\'efini par la condition~$v(\alpha)=1$, 
de l'adh\'erence de $\PVal(F)_*$ dans~$\ovR^F$.
L'application de passage au quotient
de~$\QVal(F)$ sur~$\QVal(F)/\R_+^*$
induit un hom\'eomorphisme de~$\Omega^1_\alpha$ sur une
partie compacte (s\'epar\'ee !)~$\Omega_\alpha$ de~$\QVal(F)/\R_+^*$.
Lorsque~$\alpha$ parcourt~$U_\Q(F)$, ces
parties compactes recouvrent~$\QVal(F)/\R_+^*$.

Soit $\mathscr D_\alpha$ l'ensemble des $\gamma\in\Div_\Q(F)$
pour lesquels il existe $n\in\N$ tel que $-n\cdot\alpha\leq \gamma\leq n\cdot\alpha$.
C'est un sous-espace vectoriel r\'eticul\'e de $\Div_\Q(F)$.
Pour tout $\gamma\in\mathscr D_\alpha$, la fonction~$j(\gamma)$
sur~$\Omega^1_\alpha$ donn\'ee par $j(\gamma)(v)=v(\gamma)$
est continue et \`a valeurs r\'eelles. 
Ces fonctions~$j(\gamma)$ s\'eparent les points de~$\Omega^1_\alpha$
et contienent les fonctions constantes (prendre $\gamma=\alpha$!).
%
La version r\'eticul\'ee du th\'eor\`eme de Stone 
\parencite[chapitre~X, \S4, \no2, p.~35, corollaire]{Bourbaki-2007m},
entra\^{\i}ne que l'image de~$j$ est dense dans l'espace des fonctions
continues sur~$\Omega^1_\alpha$ (pour la topologie de la convergence compacte).
Il existe donc une unique  mesure $\mu^1_\alpha$ sur~$\Omega^1_\alpha$
pour laquelle
\[ \lambda(\gamma) = \int_{\Omega^1_\alpha} v(\gamma)\,d\mu^1_\alpha(v) \]
pour tout $\gamma\in\mathscr D_\alpha$,
et une unique mesure $\mu_\alpha$ sur~$\Omega_\alpha$
d\'eduite de~$\mu^1_\alpha$ par l'hom\'eomorphisme
de~$\Omega^1_\alpha$ sur~$\Omega_\alpha$.

Les mesures~$\mu_\alpha$
v\'erifient les relations de compatibilit\'e suivantes.
Pour $\alpha,\beta\in U_\Q(F)^+$,
la condition $v(\beta)=+\infty$ 
d\'efinit un ensemble $\mu_\alpha$-n\'egligeable dans~$\Omega_\alpha$.
Soit alors $f_\alpha^\beta$ l'application
sur $\Omega_\alpha\cap\Omega_\beta$ qui, pour tout $v\in\QVal(F)$
tel que $0<v(\alpha), v(\beta)<+\infty$,
applique la classe de~$v$ sur le quotient~$v(\alpha)/v(\beta)$,
et qui applique toute autre classe en~$0$.
Avec ces notations, les restrictions \`a~$\Omega_\alpha\cap\Omega_\beta$
des mesures~$\mu_\alpha$ et $\mu_\beta$ v\'erifient la relation
$\mu_\alpha = f_{\alpha}^\beta \cdot \mu_\beta $.

\begin{defi}
Une \emph{structure locale de mesures} sur~$F$
est la donn\'ee, pour tout $a\in F^\times$,
d'une mesure~$\mu_a$ sur~$\Omega_{\sup(0,\div(a))}$,
de telle fa\c{c}on que les conditions suivantes soient satisfaites,
pour tous $a,b\in F^\times$:
\begin{enumerate}[\upshape (i)]
\item L'ensemble des $v\in\Omega_a$ tels que $v(b)=+\infty$
est $\mu_a$-n\'egligeable;
\item Sur $\Omega_a\cap\Omega_b$, on a $\mu_a = f_a^b\cdot \mu_b$,
o\`u $f_a^b$ est la fonction sur~$\Omega_a\cap\Omega_b$
(continue en dehors d'un ensemble $\mu_a$ et $\mu_b$-n\'egligeable)
qui applique la classe de~$v$ sur $v(a)/v(b)$.
\end{enumerate}
\end{defi}

Une structure locale de mesures $(\mu_a)$ sur~$F$ est dite \emph{globale}
si l'on a
\begin{equation}
\mu_{a^{-1}}(\Omega_{a^{-1}})=\mu_a(\Omega_a)
\end{equation}
pour tout $a\in F^\times$.

\begin{prop}
Soit $\lambda$ une fonctionnelle divisorielle sur~$F$.
Pour tout $a\in F^\times$, notons $\mu_a$ la mesure
associ\'ee \`a l'\'el\'ement positif $\sup(0,\div(a))$ de~$\Div_\Q(F)$.
La famille $(\mu_a)_{a\in F^\times}$ est une structure locale de mesures 
sur~$F$;
elle est globale si et seulement si $\lambda$ l'est.
\end{prop}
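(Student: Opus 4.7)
Pour $a\in F^\times$, posons $\alpha_a=\sup(0,\div(a))\in\Div_\Q^+(F)$, de sorte que $\mu_a=\mu_{\alpha_a}$ et $\Omega_a=\Omega_{\alpha_a}$ par d\'efinition. Le plan est de v\'erifier successivement les deux axiomes d'une structure locale de mesures, puis le crit\`ere de globalit\'e, en exploitant la construction des $\mu_\alpha$ et leur compatibilit\'e rappel\'ees juste avant l'\'enonc\'e.

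Pour l'axiome~(i), \`a savoir la $\mu_a$-n\'egligeabilit\'e de $\{v\in\Omega_a\sozat v(b)=+\infty\}$, j'utiliserai la suite $\gamma_n=\inf(n\alpha_a,\alpha_b)$ : chaque $\gamma_n$ v\'erifie $0\leq\gamma_n\leq n\alpha_a$, donc appartient \`a $\mathscr D_{\alpha_a}$, et la fonction continue $j(\gamma_n)$ sur~$\Omega^1_{\alpha_a}$ vaut $\min(n,v(\alpha_b))$. Comme son int\'egrale \'egale $\lambda(\gamma_n)$ et que la relation $\alpha_b-\gamma_n\in\Div_\Q^+(F)$ fournit $\lambda(\gamma_n)\leq\lambda(\alpha_b)<+\infty$, le th\'eor\`eme de convergence monotone livre $\int v(\alpha_b)\,d\mu^1_{\alpha_a}(v)\leq\lambda(\alpha_b)$, donc $v(\alpha_b)<+\infty$ $\mu_a$-presque partout ; comme $v(\alpha_b)=\sup(0,v(b))$, on en d\'eduit $v(b)<+\infty$ presque partout. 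Pour l'axiome~(ii), j'observerai que sur $\Omega_a\cap\Omega_b$ on a $v(\alpha_a)=v(a)>0$ et $v(\alpha_b)=v(b)>0$, si bien que la fonction $f_a^b$ de l'\'enonc\'e co\"{\i}ncide avec la fonction $f_{\alpha_a}^{\alpha_b}$ introduite plus haut. La relation $\mu_{\alpha_a}=f_{\alpha_a}^{\alpha_b}\cdot\mu_{\alpha_b}$ d\'ej\`a \'enonc\'ee entra\^{\i}ne alors imm\'ediatement $\mu_a=f_a^b\cdot\mu_b$.

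Pour le crit\`ere de globalit\'e, je calculerai la masse totale
\[ \mu_a(\Omega_a) = \mu^1_{\alpha_a}(\Omega^1_{\alpha_a}) = \int_{\Omega^1_{\alpha_a}} v(\alpha_a)\,d\mu^1_{\alpha_a}(v) = \lambda(\alpha_a), \]
en utilisant que $v(\alpha_a)=1$ sur~$\Omega^1_{\alpha_a}$ et que $\alpha_a\in\mathscr D_{\alpha_a}$. De fa\c{c}on identique, $\mu_{a^{-1}}(\Omega_{a^{-1}})=\lambda(\alpha_{a^{-1}})$. L'identit\'e $\sup(0,x)-\sup(0,-x)=x$, valable dans tout groupe ab\'elien r\'eticul\'e, appliqu\'ee \`a $x=\div(a)$ donne $\alpha_a-\alpha_{a^{-1}}=\div(a)$ dans~$\Div_\Q(F)$, d'o\`u
\[ \mu_a(\Omega_a)-\mu_{a^{-1}}(\Omega_{a^{-1}})=\lambda(\div(a)). \]
La globalit\'e de la famille $(\mu_a)$ \'equivaut donc \`a l'annulation de~$\lambda$ sur tous les diviseurs principaux, c'est-\`a-dire \`a la globalit\'e de~$\lambda$.

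L'essentiel du travail technique --- existence, unicit\'e et compatibilit\'e des mesures $\mu^1_\alpha$ via le th\'eor\`eme de Stone--Weierstrass r\'eticul\'e et le th\'eor\`eme de Riesz --- a d\'ej\`a \'et\'e accompli dans la construction pr\'eliminaire ; la pr\'esente preuve se r\'eduit donc au choix des bons~$\alpha$, \`a une application de la convergence monotone pour~(i) et \`a l'observation alg\'ebrique $\div(a)=\alpha_a-\alpha_{a^{-1}}$ pour la caract\'erisation de la globalit\'e.
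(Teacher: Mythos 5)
Votre preuve est correcte et suit exactement la voie que le texte pr\'epare au \S\ref{ss.mesures} (le papier ne d\'etaille d'ailleurs pas la d\'emonstration) : l'axiome~(ii) et la n\'egligeabilit\'e sortent des relations de compatibilit\'e d\'ej\`a \'enonc\'ees pour les $\mu_\alpha$, votre argument de convergence monotone avec $\gamma_n=\inf(n\alpha_a,\alpha_b)$ \'etant la bonne mani\`ere de justifier le point~(i). Le calcul de la masse totale $\mu_a(\Omega_a)=\lambda(\sup(0,\div(a)))$ et l'identit\'e r\'eticul\'ee $\sup(0,x)-\sup(0,-x)=x$ donnent bien l'\'equivalence des deux notions de globalit\'e.
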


\subsection{\'Equivalences}\label{ss.gvf}

Le th\'eor\`eme principal de cette premi\`ere partie
est que tous ces regards sur la notion de corps globalement valu\'e
sont \'equivalents.

\begin{theo}
Soit $F$ un corps. Les constructions pr\'ec\'edentes
fournissent des bijections entre structures de hauteur,
structures de termes locaux, fonctionnelles divisorielles
et structures locales de mesures sur~$F$, 
qui induisent des bijections entre leurs versions globales.
\end{theo}
\begin{proof}
On a d\'efini des applications :
\begin{itemize}
\item Des structures de termes locaux vers les structures de hauteur
(formule~\eqref{eq.tl-h}) ;
\item Des fonctionnelles divisorielles vers les structures
de hauteur, et inversement (proposition~\ref{prop.fd-h}) ;
\item Des fonctionnelles divisorielles vers les structures locales
de mesures (\S\ref{ss.mesures}).
\end{itemize}
La construction d'une structure de termes locaux associ\'ee
\`a une structure locale de mesures passe par la construction
de mesures~$\mu$ sur~$\QVal(F)$ associ\'ees \`a une structure locale~$(\mu_a)$
de mesures sur~$F$.
Ces mesures~$\mu$ ont la propri\'et\'e que pour tout~$a\in F^\times$,
la restriction de~$\mu$
\`a l'ouvert des $v\in\QVal(F)$ tels que $0<v(a)<+\infty$
est de la forme $(s_a)_* \mu_a$, o\`u $s_a\colon \Omega_a \to\QVal(F)$
est une section bor\'elienne de l'application de passage au quotient.
Pour tout polyn\^ome tropical~$t\in\mathscr T_V$ et tout $a\in (F^\times)^V$,
on d\'emontre que la formule
(comparer avec la relation~\eqref{eq.ca-tl})
\[ R_t(a) = \int_{\QVal(F)} t(v(a))\, d\mu(v), \]
d\'efinit une structure de termes locaux sur~$F$.
De plus, on d\'emontre que cette structure ne d\'epend pas
de la mesure~$\mu$ choisie, mais seulement de la famille~$(\mu_a)$.
\end{proof}

\subsection{Corps globalement valu\'es}

Un \emph{corps multiplement valu\'e} est un corps muni de l'une
des structures pr\'ec\'edentes. On dit que c'est un \emph{corps globalement valu\'e}
si cette structure est globale.
Il est dit \emph{purement non archim\'edien} si 
l'on peut prendre $e=0$ dans sa structure de hauteur.

\begin{exem}[Restriction et extension]
Soit $F\to K$ une extension de corps.  

Une structure
de corps globalement valu\'e sur~$K$
induit une telle structure sur~$F$: il suffit de restreindre
la structure de hauteur, ou la structure de termes locaux.

Inversement, supposons $F$ munie d'une structure de corps globalement
valu\'e. Si l'extension~$K$ de~$F$ est quasi-galoisienne
\parencite[chapitre~V, \S9, \no3; certains auteurs disent \og normale \fg]{Bourbaki-2007l},
il existe une unique structure de corps globalement valu\'e
sur~$K$ qui induise celle de~$F$ et soit invariante par $\Aut_F(K)$.

Il s'ensuit qu'une de corps globalement valu\'e sur~$F$
s'\'etend de fa\c{c}on canonique \`a toute extension alg\'ebrique;
on parlera d'extension \emph{sym\'etrique}.
Dans le cas du corps~$\overline\Q$ des nombres alg\'ebriques
et d'une cl\^oture alg\'ebrique du corps~$k(T)$,
on qualifiera de \emph{standard}
la structure de corps globalement valu\'e  qui prolonge
celle de~$\Q$ ou de~$k(T)$.
\end{exem}

\begin{exem}[Corps globaux] \label{exem-aw}
\textcite[th\'eor\`eme 3]{ArtinWhaples-1945} ont caract\'eris\'e les
\emph{corps globaux}~$F$ de la th\'eorie du corps de classes.
Ils partent de corps munis d'un ensemble~$M_F$ de valeurs absolues
non triviales sur~$F$, deux \`a deux non \'equivalentes,
donnant lieu \`a une formule du produit 
\[ \prod_{v\in M_F} \abs a_v = 1 \]
(o\`u, pour tout $a\in F^\times$, presque tous les facteurs sont \'egaux \`a~$1$),
et d\'emontrent que les corps globaux sont alors caract\'eris\'es par
l'existence d'une valeur absolue $v\in M_F$
qui, ou bien soit archim\'edienne,
ou bien soit discr\`ete \`a corps r\'esiduel fini.

Ils fournissent en particulier des exemples de corps globalement valu\'es.

Dans le premier cas, $F$ est un corps de nombres;
on peut d\'ecrire l'espace des fonctionnelles divisorielles globales
et en d\'eduire  que
\emph{toute structure de corps globalement
valu\'e sur~$F$ est multiple de la structure standard.}

Dans le second cas, $F$ est le corps des fonctions d'une
courbe propre, r\'eguli\`ere, g\'eom\'etriquement int\`egre~$C$ sur un corps fini~$k$.
De m\^eme, on peut d\'ecrire l'espace des fonctionnelles divisorielles
globales qui sont triviales sur~$k$ et en d\'eduire que 
\emph{toute structure de corps globalement
valu\'e sur~$F$ qui est triviale sur~$k$ est multiple de la structure
standard.}

Cette propri\'et\'e s'\'etend \`a leurs extensions alg\'ebriques.
\end{exem}

\begin{exem}
Soit $k$ un corps alg\'ebriquement clos et soit $F$ le
corps des fonctions d'un $k$-sch\'ema propre~$X$, suppos\'e irr\'eductible et normal.
Soit $C\to S$ une famille plate de courbes (propres, irr\'eductibles) sur~$X$ 
param\'etr\'ee par un sch\'ema non vide connexe~$S$;
on suppose que cette famille n'est contenue 
dans aucun sous-sch\'ema strict de~$X$.
Soit $\Omega$ l'ensemble des hypersurfaces int\`egres de~$X$.
On le munit de la mesure discr\`ete~$\mu_C$ pour laquelle
la masse~$\mu_C(H)$ d'un point $H\in\Omega$
est le degr\'e d'intersection commun $C_s\cdot H$,
pour $s\in S$ tel que $C_s\not\subset H$.
Comme $X$ est normale, toute hypersurface $H\in\Omega$ 
est le centre d'une unique valuation discr\`ete, normalis\'ee;
notons-la $v_H$.
On a ainsi muni~$F$ d'une structure de courbe ad\'elique.
Pour toute fonction~$f\in k(X)^\times$, il existe $s\in S$
tel que la courbe~$C_s$ ne soit contenue dans aucune composante
de~$\div(f)$; alors, 
\[ \sum_{H\in\Omega} v_H (f) \mu_C(H) = C_s \cdot \div(f) = 0. \]
On obtient ainsi une structure de corps globalement valu\'e sur~$F$;
sa restriction \`a~$k$ est la structure triviale.
\end{exem}

Les exemples pr\'ec\'edents peuvent \^etre d\'ecrits via
la notion de courbe ad\'elique, mais 
les prochains sont construits par ultraproduits.

\begin{exem}
Soit $(F_i)_{i\in I}$ une famille de corps globalement valu\'es;
on suppose qu'il existe un m\^eme nombre r\'eel~$e$
pour lequel leurs fonctions hauteurs (que nous noterons indistinctement~$h$)
satisfont l'in\'egalit\'e~(vi) de la d\'efinition~\ref{defi.h}.
Soit $\mathfrak u$ un ultrafiltre (non principal) sur~$I$.
Consid\'erons la partie~$B$ de $\prod_{i\in I} F_i$
form\'ee des familles $(a_i)$ telles que la famille $(h_2(1,a_i))$ soit 
\'eventuellement born\'ee suivant l'ultrafiltre~$\mathfrak u$.
C'est un sous-anneau.
Munissons-le de la relation d'\'equivalence d\'efinie par
$(a_i)\sim (b_i)$ si et seulement si l'ensemble des $i\in I$
tels que $a_i=b_i$ appartient \`a~$\mathfrak u$.
Cette relation est compatible avec la structure d'anneau de~$B$,
de sorte que l'ensemble quotient~$F_{\mathfrak u}$ 
h\'erite d'une structure d'anneau;
c'est en fait un corps.

Pour toute suite finie $(a_1,\dots,a_n)$ d'\'el\'ements de~$B$,
la famille de nombres r\'eels $(h_n(a_{1,i},\dots,a_{n,i}))_i$ est born\'ee,
donc poss\`ede une limite suivant l'ultrafiltre~$\mathfrak u$.
Cette limite ne d\'epend que des images~$\alpha_1,\dots,\alpha_n$
dans~$F_{\mathfrak u}$ de $a_1,\dots,a_n$; 
notons-la $h_n(\alpha_1,\dots,\alpha_n)$.
On obtient ainsi une structure de hauteur sur~$F_{\mathfrak u}$.

On peut appliquer cette construction \`a une suite
de structures de corps globalement valu\'es sur $\overline\Q$
dont les erreurs archim\'ediennes tendent vers~$0$.
Cela fournit un corps globalement valu\'e, alg\'ebriquement
clos de caract\'eristique z\'ero, induisant la structure
triviale sur~$\overline\Q$, et donc non archim\'edien.

On peut \'egalement l'appliquer \`a une suite de 
structures de corps globalement valu\'ees sur des cl\^otures
alg\'ebriques $K_p$ de $\F_p(T)$, o\`u $I$ est l'ensemble des nombres premiers.
On en d\'eduit un corps globalement valu\'e, alg\'ebriquement clos, 
de caract\'eristique z\'ero, induisant \'egalement la structure
triviale sur $\overline\Q$.
\end{exem}

\begin{exem}
Soit $\mathscr M$ le corps des fonctions m\'eromorphes sur~$\C$.
Pour $f\in\mathscr M^\times$ et $a\in\C$, notons $v_a(f)$
l'ordre d'annulation de~$f$ en~$a$.
Soit $f\in\mathscr M^\times$;
supposons $f(0)\neq 0$. Pour tout nombre r\'eel~$r>0$,
 la formule de Jensen s'\'ecrit
\[ \log \abs{f(0)}^{-1} = \sum_{0<\abs a <r} v_a(f) \log \frac r{\abs a}
 + \int_{0}^{2\pi} \log \abs{f(re^{i\theta})}^{-1}\frac{d\theta}{2\pi}. \]
La premi\`ere partie du membre de droite de cette
formule se d\'eduit de valuations sur~$\mathscr M$;
le second terme est de nature diff\'erente, si ce n'est
que pour tout $z\in\C$, l'application $f\mapsto \log \abs{f(z)}^{-1}$
est une pseudo-valuation sur~$\mathscr M$.

Pour $a\in\C$, notons~$\delta_a$ la mesure de Dirac en~$a$;
pour $r\in\R_+^*$, notons~$\gamma_r$ la mesure de Haar normalis\'ee
sur le cercle de centre~$0$ et de rayon~$r$;
soit alors $\mu_r$ la mesure positive sur~$\C$ donn\'ee par
\[ \mu_r = \sum_{0<\abs a<r} \log \frac r{\abs a} \delta_a
 + \gamma_r . \]

Soit $\eta\colon\R_+^*\to\R_+^*$ une fonction tendant vers~$0$
et soit $\mathfrak u$ un ultrafiltre non principal sur~$\R_+^*$.
Pour $f\in\mathscr M$ et $r>0$, posons
\[ \Ht_r (f) = \mu_r (\sup(0, \log \abs f^{-1}) ).\]
En fait, $\Ht_r(f)$ n'est autre que la fonction
caract\'eristique~$T_f(r)$ attach\'ee \`a~$f$ par la th\'eorie de Nevanlinna,
voir \parencite{Vojta-1987}.

Soit alors $\mathscr M_{\eta,\mathfrak u}$ 
l'ensemble des fonctions $f\in\mathscr M$
telles que la famille $(\eta(r) \Ht_r(f))_r$  soit born\'ee selon~$\mathfrak u$;
notons alors $\Ht(f)$ la limite de cette famille selon~$\mathfrak u$.
Alors, $\mathscr M_{\eta,\mathfrak u}$
est un sous-corps de~$\mathscr M$.
Pour toute suite finie $(f_1,\dots,f_n)$ dans~$\mathscr M_{\eta,\mathfrak u}$,
la famille de nombres r\'eels
\[ ( \eta(r) \mu_r(\sup(\log\abs{f_1}^{-1},\dots,\log\abs{f_n}^{-1})) \]
est born\'ee selon~$\mathfrak u$; notons $h_n(f_1,\dots,f_n)$ sa limite.
Cela d\'efinit une structure de corps globalement valu\'e
sur le sous-corps~$\mathscr M_{\eta,\mathfrak u}$.
qui est triviale sur le sous-corps des fonctions constantes.
\end{exem}

\section{Th\'eorie de l'intersection}

Dans ce chapitre, on s'int\'eresse plus particuli\`erement 
aux structures de corps globalement valu\'es
sur un corps~$F$ qui v\'erifient l'une des deux hypoth\`eses suivantes:
\begin{itemize} 
\item Le corps~$F$ est de type fini sur un sous-corps~$k$,
et sa structure de corps globalement  valu\'e induit
la structure triviale sur~$k$ (cas g\'eom\'etrique);
\item Le corps~$F$ est de type fini sur~$\Q$
et sa structure de corps globalement valu\'e induit
la structure standard sur~$\Q$ (cas arithm\'etique).
\end{itemize}
Nous allons voir comment la th\'eorie de l'intersection
(g\'eom\'etrique ou arithm\'etique) permet de d\'ecrire
essentiellement toutes ces structures de corps globalement valu\'e.

\subsection{Densit\'e des valuations}

Commen\c{c}ons par introduire une variante \og relative\fg 
des corps globalement valu\'es o\`u l'on impose la structure triviale
sur un sous-corps.

Soit $F$ un corps  et $k$ un sous-corps de~$F$.
On note $\Val(F/k)$ et $\PVal(F/k)$ les sous-espaces de~$\Val(F)$ et~$\PVal(F)$
form\'es des valuations et pseudo-valuations sur~$F$ 
dont la restriction \`a~$k$ est triviale.
On note encore $\Val(F/k)_*$ et $\PVal(F/k)_*$ les sous-espaces 
constitu\'es des valuations et pseudo-valuations non triviales,
ainsi que $\QVal(F/k)$ l'adh\'erence de~$\PVal(F/k)_*$ dans~$\QVal(F)$.

On note alors plut\^ot $U_\Q(F/k)$ le groupe r\'eticul\'e $U_\Q(F)$,
et l'on d\'efinit $\Div_\Q(F/k)$ comme son quotient 
par l'id\'eal des $\gamma\in U_\Q(F/k)$
tels que $v(\gamma)=0$ pour tout $v\in\QVal(F/k)$.
Ce groupe est muni d'une relation d'ordre analogue
\`a celle de~$\Div_\Q(F)$
et on note $\Div_\Q^+(F/k)$ le c\^one effectif correspondant,
image des~$\gamma\in U_\Q(F/k)$ tels que $v(\gamma)\geq0$ pour
tout $v\in\QVal(F/k)$.

\begin{prop}
Soit $k$ un corps et soit~$F$ une extension de type fini de~$k$.
L'espace $\Val(F/k)$ des valuations (\`a valeurs r\'eelles) sur~$F$
qui sont triviales sur~$k$ est dense dans l'espace~$\PVal(F/k)$
des pseudo-valuations sur~$F$ qui sont triviales sur~$k$.
\end{prop}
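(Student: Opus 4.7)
Il s'agit de montrer que $v \in \PVal(F/k)$ est dans l'adh\'erence de $\Val(F/k)$ pour la topologie de la convergence simple sur $\ovR^F$, c'est-\`a-dire que pour toute partie finie $\{f_1,\dots,f_n\}\subset F^\times$ et tout voisinage $V_i\subset\ovR$ de $v(f_i)$, je vais construire $w\in\Val(F/k)$ telle que $w(f_i)\in V_i$ pour chaque~$i$.

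Mon premier pas serait de d\'ecortiquer la structure alg\'ebrique de~$v$. L'axiome multiplicatif (ii) des pseudo-valuations, appliqu\'e \`a $f\cdot f^{-1}=1$, entra\^{\i}ne que l'ensemble $R:=\{f\in F : v(f)>-\infty\}$ est un \emph{anneau de valuation} de $F$ contenant~$k$ (on ne peut avoir $v(f)=v(f^{-1})=-\infty$, car la r\`egle s'appliquerait alors et donnerait $v(1)=-\infty$). Soit $\mathfrak p:=\{f\in R : v(f)=+\infty\}$, id\'eal premier que l'on supposera maximal apr\`es remplacement \'eventuel de $R$ par son localis\'e $R_\mathfrak p$ (encore un anneau de valuation). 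Alors $v$ induit une valuation r\'eelle $\bar v$ sur $\kappa(R)=R/\mathfrak p$ triviale sur~$k$, et $v$ s'identifie \`a la valuation compos\'ee $v_R\star\bar v$ \`a valeurs dans $\Gamma_R\oplus_{\mathrm{lex}}\R$, o\`u $v_R$ est la valuation associ\'ee \`a~$R$: la partie archim\'edienne de $v_R(f)$ d\'etermine si $v(f)$ vaut $+\infty$, $-\infty$, ou $\bar v(\bar f)\in\R$ selon qu'elle est strictement positive, strictement n\'egative ou nulle.

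Ensuite, j'utiliserais l'hypoth\`ese que $F/k$ est de type fini via l'in\'egalit\'e d'Abhyankar, qui garantit que $\Gamma_R$ est de rang rationnel fini~$s$. En appliquant les th\'eor\`emes d'uniformisation locale des valuations (Zariski en caract\'eristique nulle et leurs extensions), je construirais un mod\`ele projectif normal $X$ de $F/k$ sur lequel $v_R$ et une extension convenable de $\bar v$ soient repr\'esent\'ees par un drapeau $D_1\supset\dots\supset D_s$ de sous-sch\'emas lisses de $X$, avec $D_i$ de codimension~$i$, et des coordonn\'ees locales $(x_1,\dots,x_s)$ adapt\'ees telles que les $f_i$ s'y d\'ecrivent simplement.

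Enfin, je consid\'ererais la \emph{valuation quasi-monomiale} $w_{\vec\alpha}$ associ\'ee au drapeau, de poids $\vec\alpha\in\R_{>0}^s$, qui attribue au mon\^ome $x_1^{a_1}\cdots x_s^{a_s}$ le poids $\sum_j\alpha_ja_j$ et prend la valeur minimale sur les d\'eveloppements: c'est une valuation r\'eelle de rang~$1$ sur $F/k$ triviale sur~$k$. Pour un choix de poids $\alpha_1\gg\alpha_2\gg\dots\gg\alpha_s>0$ tendant vers l'infini de mani\`ere compatible, je v\'erifierais que $w_{\vec\alpha}(f_i)\to v(f_i)$ dans $\ovR$: le poids dominant $\alpha_1$ (refl\'etant la partie archim\'edienne de $v_R$) envoie $w_{\vec\alpha}(f_i)$ vers $\pm\infty$ si $v(f_i)=\pm\infty$, tandis que pour $v(f_i)\in\R$, la valeur $w_{\vec\alpha}(f_i)$ est d\'etermin\'ee par les poids suivants et approche $\bar v(\bar f_i)=v(f_i)$. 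L'obstacle principal pr\'evu est la construction effective d'un drapeau simultan\'ement compatible avec $v_R$, l'extension choisie de $\bar v$, et la partie finie $\{f_i\}$, ce qui rel\`eve des r\'esultats fins d'uniformisation locale des valuations sur un corps de fonctions de type fini.
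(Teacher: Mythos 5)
Votre d\'emarche est r\'eellement diff\'erente de celle de l'article : celui-ci identifie $\PVal(F/k)$ \`a la limite projective des espaces de Berkovich $X^\an$ (pour la valeur absolue triviale sur~$k$) sur les mod\`eles propres~$X$ de~$F$, puis invoque la densit\'e, \'etablie par Berkovich, de la fibre g\'en\'erique $\pi^{-1}(\eta_X)\simeq\Val(F/k)$ dans chaque~$X^\an$ --- aucune uniformisation ni r\'esolution n'y intervient. Votre premi\`ere \'etape (d\'ecomposition de~$v$ en une valuation de rang sup\'erieur compos\'ee avec une valuation r\'eelle sur le corps r\'esiduel) est correcte et figure d'ailleurs, sous forme de remarque, dans la preuve de l'article. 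Notez au passage que $\mathfrak p$ est automatiquement l'id\'eal maximal de~$R$ : si $f\in R$ n'est pas inversible dans~$R$, alors $v(f^{-1})=-\infty$ et la multiplicativit\'e forcerait $v(1)=-\infty$ sauf si $v(f)=+\infty$ ; la localisation est donc superflue. De m\^eme, l'ultram\'etricit\'e de~$v$ (donc de~$\bar v$) d\'ecoule de sa trivialit\'e sur le sous-corps premier.

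La suite comporte en revanche une lacune r\'eelle. D'abord, l'uniformisation locale des valuations n'est connue qu'en caract\'eristique nulle (et en petite dimension en caract\'eristique positive) : c'est un probl\`eme ouvert en g\'en\'eral, alors que la proposition est \'enonc\'ee pour un corps~$k$ arbitraire ; votre argument ne couvre donc pas le cas des corps de caract\'eristique~$p$, qui est essentiel pour la suite du texte. Ensuite, m\^eme en caract\'eristique nulle, les deux \'etapes d\'ecisives --- la construction d'un drapeau simultan\'ement compatible avec~$v_R$, avec une extension de~$\bar v$ (dont le corps de d\'efinition $\kappa(\mathfrak p)$ n'est pas n\'ecessairement de type fini sur~$k$) et avec les~$f_i$, puis la convergence $w_{\vec\alpha}(f_i)\to v(f_i)$ --- sont affirm\'ees plut\^ot que d\'emontr\'ees, et vous reconnaissez vous-m\^eme que la premi\`ere est \og l'obstacle principal \fg. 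Enfin, la normalisation des poids est impr\'ecise : si tous les~$\alpha_j$ tendent vers l'infini, $w_{\vec\alpha}(f_i)$ ne peut converger vers une valeur finie non nulle ; il faut faire tendre vers l'infini les seuls rapports $\alpha_j/\alpha_{j+1}$ codant la partie \og $v_R$ \fg{} tout en fixant les poids qui codent~$\bar v$. En l'\'etat, la preuve est donc incompl\`ete ; la voie de l'article, via $X^\an$ et la limite projective sur les mod\`eles, contourne pr\'ecis\'ement ces difficult\'es.
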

\begin{proof}
Munissons~$k$ de la valeur absolue triviale.

Si $X$ est un $k$-sch\'ema de type fini,
l'espace $k$-analytique~$X^\an$ au sens de~\textcite{Berkovich-1990} 
a pour ensemble sous-jacent l'ensemble des couples $(x,p)$,
o\`u $x\in X$ et $p$ est une valeur absolue sur le
corps r\'esiduel $k(x)$ qui induit la valeur absolue triviale sur~$k$.
Soit $\pi\colon X^\an\to X$ l'application donn\'ee par $\pi(x,p)=x$.
Cet ensemble est muni de la topologie la moins fine telle que
pour tout ouvert~$U$ de~$X$
et toute fonction $f\in\mathscr O_X(U)$,
l'ensemble $\pi^{-1}(U)$ soit ouvert dans~$X^\an$ et
l'application $(x,p)\mapsto p(f(x))$ de~$\pi^{-1}(U)$ dans~$\R$
soit continue.

L'application~$\pi$ est continue et surjective.

Si $X$ est s\'epar\'e, l'espace~$X^\an$ est localement compact;
si $X$ est propre, il est compact.

Supposons~$X$ int\`egre et soit~$\eta_X$ son point g\'en\'erique.
La partie~$\pi^{-1}(\eta_X)$ de~$X^\an$ s'identifie \`a l'espace~$\Val(F/k)$
des valeurs absolues sur~$k(X)$ qui sont triviales sur~$k$;
cette partie est dense dans~$X^\an$ (\cite{Berkovich-1990}, \S3.5).

Plus g\'en\'eralement, toute pseudo-valuation
\`a valeurs r\'eelles sur~$F$ qui est triviale sur~$k$
est une valuation sur un anneau de valuation de~$F$ contenant~$k$.
On d\'efinit ainsi une application continue $\PVal(F/k)\to X^\an$.

Cette construction est fonctorielle. Soit $u\colon X\to Y$ un morphisme
de $k$-sch\'emas, soit $x\in X$ ; le morphisme~$u$ 
induit un morphisme de corps $u^*\colon k(u(x))\to k(x)$
de sorte qu'une valeur absolue~$p$ sur~$k(x)$ qui 
est triviale sur~$k$ induit, via~$u^*$,
une valeur absolue similaire~$u^*p$ sur~$k(u(x))$.
L'application $u^\an\colon X^\an\to Y^\an$
qui applique $(x,p)$ sur~$ (u(x),u^*p)$ est continue.
Si $u$ est surjective, il en est de m\^eme de~$u^\an$.

Restreignons-nous maintenant au cas o\`u les $k$-sch\'emas
consid\'er\'es sont des \emph{mod\`eles} de~$F$, c'est-\`a-dire
des $k$-sch\'emas propres et int\`egres munis d'un isomorphisme $F\simeq k(X)$,
et les morphismes $u\colon X\to Y$ sont compatibles \`a ces isomorphismes
(en particulier birationnels).

On en d\'eduit une application continue 
\[ \PVal(F/k) \to \varprojlim_{F \simeq k(X)}  X^\an. \]
On prouve que cette application est un \emph{hom\'eomorphisme}:
l'argument est similaire \`a l'identification, classique
dans le contexte des espaces de Riemann de Zariski,
de la  limite projective du syst\`eme
des sch\'emas birationnels \`a un sch\'ema int\`egre donn\'e
comme un espace de valuations.
La densit\'e de $\Val(F/k)$ dans chaque~$X^\an$ entra\^{\i}ne la densit\'e requise.
\end{proof}

\begin{coro}
Soit $k$ un corps et soit $F$ une extension de type fini de~$k$.
Pour qu'un \'el\'ement $\gamma\in U_\Q(F/k)$ soit positif,
il faut et il suffit que $v(\gamma)\geq0$ pour toute
valuation r\'eelle~$v$ sur~$F$.
\end{coro}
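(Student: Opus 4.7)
Mon approche consiste \`a d\'eduire le corollaire de la proposition pr\'ec\'edente (la densit\'e de $\Val(F/k)$ dans $\PVal(F/k)$) combin\'ee \`a la continuit\'e, d\'ej\`a \'etablie dans le texte, de l'application $v\mapsto v(\gamma)$.

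La condition est \'evidemment n\'ecessaire : par d\'efinition, dire que $\gamma\in U_\Q(F/k)$ est positif signifie que $v(\gamma)\geq 0$ pour tout $v\in\QVal(F/k)$, donc en particulier pour toute valuation r\'eelle sur~$F$ triviale sur~$k$, puisque $\Val(F/k)\subset \PVal(F/k)_*\subset\QVal(F/k)$.

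Pour la r\'eciproque, je proc\'ederais en deux \'etapes de continuit\'e successives. On a d\'ej\`a vu dans le texte que pour tout $\gamma\in U_\Q(F)$, l'application $v\mapsto v(\gamma)$, d\'efinie et continue sur $\PVal(F)$ (topologie de la convergence simple), se prolonge continu\^ument \`a $\QVal(F)$. Supposons que $v(\gamma)\geq 0$ pour tout $v\in\Val(F/k)$. D'apr\`es la proposition pr\'ec\'edente, $\Val(F/k)$ est dense dans $\PVal(F/k)$, et la continuit\'e donne donc $v(\gamma)\geq 0$ pour tout $v\in\PVal(F/k)$, a fortiori pour tout $v\in\PVal(F/k)_*$. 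Par d\'efinition, $\QVal(F/k)$ est l'adh\'erence de $\PVal(F/k)_*$ dans $\QVal(F)$, et le prolongement continu du point de vue pr\'ec\'edent fournit alors $v(\gamma)\geq 0$ pour tout $v\in\QVal(F/k)$, ce qui est pr\'ecis\'ement la d\'efinition de la positivit\'e de l'image de~$\gamma$ dans $\Div_\Q(F/k)$.

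La d\'emonstration ne pr\'esente donc pas de difficult\'e propre : tout le travail a \'et\'e effectu\'e dans la proposition pr\'ec\'edente, dont la preuve repose sur l'identification, dans l'esprit des espaces de Riemann-Zariski, de $\PVal(F/k)$ \`a la limite projective des analytifi\'es $X^{\an}$ de Berkovich des mod\`eles propres~$X$ de~$F$ sur~$k$, et sur la densit\'e classique, dans chaque $X^\an$, de la fibre au-dessus du point g\'en\'erique. C'est l\`a que se concentre la subtilit\'e g\'eom\'etrique ; le corollaire lui-m\^eme n'est qu'une cons\'equence formelle, le seul point auquel il faut prendre garde \'etant que la notion de positivit\'e consid\'er\'ee est celle de $\Div_\Q(F/k)$, et non de $\Div_\Q(F)$, de sorte qu'on teste uniquement contre les valuations triviales sur~$k$.
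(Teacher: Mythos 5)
Votre démonstration est correcte et suit exactement le chemin implicite du texte (le corollaire y est énoncé sans preuve, comme conséquence directe de la proposition de densité) : nécessité par inclusion $\Val(F/k)\subset\QVal(F/k)$, suffisance par densité de $\Val(F/k)$ dans $\PVal(F/k)$ puis de $\PVal(F/k)_*$ dans $\QVal(F/k)$, combinée à la continuité de $v\mapsto v(\gamma)$ et à la fermeture de $[0,+\infty]$ dans $\ovR$. Votre remarque finale sur le fait que les valuations à considérer sont celles qui sont triviales sur~$k$ est également pertinente.
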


\begin{prop}
Soit $F$ un corps d\'enombrable. L'espace $\Val(F)$
des valuations (\`a valeurs r\'eelles) sur~$F$
est dense dans l'espace~$\PVal(F)$.
\end{prop}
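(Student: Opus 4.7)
Le plan est de reproduire l'argument de la proposition pr\'ec\'edente, en rempla\c{c}ant les $k$-sch\'emas de type fini par des $\Z$-sch\'emas, et d'exploiter la d\'enombrabilit\'e de~$F$ pour \'etendre la valuation obtenue sur un sous-corps de type fini \`a~$F$ tout entier.

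Soit $v\in\PVal(F)$ et soient $a_1,\ldots,a_n\in F^\times$ d\'efinissant, avec un~$\varepsilon>0$, un voisinage \'el\'ementaire de~$v$. Puisque~$F$ est d\'enombrable, on peut l'\'ecrire comme r\'eunion filtrante croissante $F=\bigcup_m F_m$ de sous-corps de type fini sur son corps premier~$\F_p$ ou~$\Q$ ; pour~$m$ assez grand, $F_m$ contient les~$a_i$. Il suffit alors d'approcher~$v|_{F_m}$ par une valuation r\'eelle sur~$F_m$, puis d'\'etendre celle-ci en une valuation r\'eelle sur~$F$.

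En caract\'eristique $p>0$, la restriction $v|_{\F_p}$ est automatiquement triviale (tout \'el\'ement non nul de~$\F_p$ est une racine de l'unit\'e), de sorte que $v|_{F_m}\in\PVal(F_m/\F_p)$ : la proposition pr\'ec\'edente appliqu\'ee \`a l'extension de type fini $F_m/\F_p$ fournit directement une valuation r\'eelle~$w_m\in\Val(F_m)$ approchant~$v|_{F_m}$ sur les~$a_i$. En caract\'eristique z\'ero, la restriction~$v|_\Q$ peut \^etre non triviale, et il faut adapter l'argument en consid\'erant un $\Z$-sch\'ema propre et plat~$\mathscr X$ dont le corps des fonctions est~$F_m$ et un espace analytique \og arithm\'etique\fg~$\mathscr X^\an$ sur~$\Spec\Z$, obtenu en recollant les analytifications de Berkovich des~$\mathscr X\otimes_\Z\Q_p$ en chaque place finie avec l'analytification complexe de~$\mathscr X\otimes_\Z\R$ en la place archim\'edienne. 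Le m\^eme raisonnement qu'\`a la proposition pr\'ec\'edente (continuit\'e de l'application d'oubli, compacit\'e de~$\mathscr X^\an$, densit\'e des points au-dessus du point g\'en\'erique) identifie~$\PVal(F_m)$ \`a la limite projective des~$\mathscr X^\an$ et donne la densit\'e des valuations r\'eelles.

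Une fois~$w_m$ obtenue, on l'\'etend en une valuation r\'eelle~$w$ sur~$F$ : l'extension $F/F_m$ \'etant d\'enombrablement engendr\'ee, on proc\`ede par r\'ecurrence, en utilisant l'extension de Gauss \`a chaque \'etape transcendante et l'existence (unique \`a conjugaison pr\`es) d'une extension \`a valeurs r\'eelles \`a chaque \'etape alg\'ebrique, ces op\'erations pr\'eservant la propri\'et\'e d'\^etre \`a valeurs dans~$\R$. L'obstacle principal r\'eside dans l'\'etape arithm\'etique en caract\'eristique z\'ero, qui r\'eclame une version globale de la th\'eorie des espaces analytiques de Berkovich sur~$\Spec\Z$, moins classique que la g\'eom\'etrie analytique relative sur un corps complet utilis\'ee pr\'ec\'edemment.
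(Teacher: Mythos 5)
Votre stratégie coïncide pour l'essentiel avec celle du texte, qui se borne à renvoyer à l'argument de la proposition précédente transposé aux espaces analytiques de Berkovich au-dessus de $\Spec\Z$ et aux résultats de Lemanissier--Poineau ; la réduction aux sous-corps de type fini $F_m$ et le prolongement final d'une valuation réelle de $F_m$ à $F$ (c'est là que sert la dénombrabilité) sont des compléments raisonnables et corrects, de même que le traitement direct de la caractéristique $p$ via la proposition précédente appliquée à $F_m/\F_p$.

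Un point de votre construction est toutefois inexact et ferait échouer l'argument tel quel : l'espace analytique $\mathscr X^\an$ au-dessus de $\Spec\Z$ n'est \emph{pas} obtenu en recollant les analytifications $p$-adiques des $\mathscr X\otimes_\Z\Q_p$ et l'analytification complexe. L'espace pertinent est le spectre de Berkovich au-dessus de l'anneau de Banach $(\Z,\abs{\cdot}_\infty)$ : c'est l'ensemble des semi-normes multiplicatives sur le faisceau structural bornées par $\abs{\cdot}_\infty$ sur $\Z$, et il fibre au-dessus de tout $\mathcal M(\Z)$, lequel contient, outre les places usuelles, la valeur absolue triviale et toutes les puissances $\abs{\cdot}_v^{\eps}$ pour $0\leq\eps\leq1$. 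Ce sont précisément ces points supplémentaires qui réalisent, par exemple, les pseudo-valuations de $F_m$ dont la restriction à $\Q$ est triviale ou est un multiple non normalisé de $\log\abs{\cdot}_p^{-1}$ ; elles appartiennent bien à $\PVal(F_m)$ mais ne vivent dans aucune des analytifications que vous recollez, de sorte que votre espace recollé n'identifie pas $\PVal(F_m)$ et la densité qu'il fournirait serait insuffisante. La correction consiste simplement à utiliser la définition globale de Berkovich, compacité et densité de la fibre générique étant précisément ce qu'établissent les références citées ; le reste de votre démonstration (y compris le prolongement pas à pas, où l'unicité à conjugaison près aux étapes algébriques est superflue, seule l'existence important) est alors valide.
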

\begin{proof}
La d\'emonstration est analogue mais utilise 
la possibilit\'e, au sein de la th\'eorie de~\textcite{Berkovich-1990},
d'associer un espace analytique~$X^\an$ \`a un $\Z$-sch\'ema de type fini~$X$,
ainsi que des r\'esultats de~\textcite{LemanissierPoineau-2024}
sur ces espaces analytiques.
\end{proof}

\begin{coro}
Soit $F$ un corps d\'enombrable.
Pour qu'un \'el\'ement $\gamma\in U_\Q(F)$ soit positif,
il faut et il suffit que $v(\gamma)\geq0$ pour toute
valuation r\'eelle~$v$ sur~$F$.
\end{coro}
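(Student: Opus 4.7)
Mon plan est de d\'eduire ce corollaire de la proposition pr\'ec\'edente et de la proposition~\ref{prop.UQF-pos}, exactement comme pour l'\'enonc\'e analogue dans le cas relatif.

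La condition n\'ecessaire est imm\'ediate : toute valuation r\'eelle non triviale~$v$ sur~$F$ appartient \`a~$\PVal(F)_*$, et la proposition~\ref{prop.UQF-pos} fournit alors $v(\gamma)\geq 0$ lorsque $\gamma$ est positif ; pour la valuation triviale, on a de toute fa\c{c}on $v(\gamma)=0$ pour tout $\gamma\in U_\Q(F)$.

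Pour la r\'eciproque, je supposerais $v(\gamma)\geq 0$ pour toute valuation r\'eelle~$v$ sur~$F$, et je montrerais que cette in\'egalit\'e vaut pour tout $v\in\PVal(F)_*$, ce qui permettra d'invoquer la proposition~\ref{prop.UQF-pos}. Pour $\gamma\in U_\Q(F)$ fix\'e, l'application $v\mapsto v(\gamma)$ de~$\PVal(F)$ dans~$\ovR$ est continue (propri\'et\'e rappel\'ee dans la construction de~$U_\Q(F)$). D'apr\`es la proposition pr\'ec\'edente, $\Val(F)$ est dense dans~$\PVal(F)$ ; l'in\'egalit\'e $v(\gamma)\geq 0$, valable par hypoth\`ese sur cette partie dense, s'\'etend donc par continuit\'e \`a tout $v\in\PVal(F)$, en particulier \`a tout $v\in\PVal(F)_*$.

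Il n'y a pas v\'eritablement de difficult\'e propre \`a ce corollaire : toute la substance technique est absorb\'ee dans la proposition de densit\'e qui le pr\'ec\`ede, dont la d\'emonstration mobilise la th\'eorie des espaces analytiques de~\textcite{Berkovich-1990} au-dessus de~$\Spec(\Z)$ ainsi que les r\'esultats r\'ecents de~\textcite{LemanissierPoineau-2024}. L'hypoth\`ese de d\'enombrabilit\'e de~$F$ intervient uniquement par l'interm\'ediaire de cette proposition de densit\'e.
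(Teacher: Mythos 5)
Votre d\'emonstration est correcte et suit exactement la voie que le texte laisse implicite : le sens direct est imm\'ediat puisque $\Val(F)_*\subset\PVal(F)_*$ (et la valuation triviale donne $v(\gamma)=0$), et la r\'eciproque combine la continuit\'e de $v\mapsto v(\gamma)$ sur~$\PVal(F)$, la densit\'e de~$\Val(F)$ dans~$\PVal(F)$ fournie par la proposition pr\'ec\'edente (c'est bien l\`a, et seulement l\`a, qu'intervient la d\'enombrabilit\'e de~$F$), et la caract\'erisation de la positivit\'e par la proposition~\ref{prop.UQF-pos}. Rien \`a redire.
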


\subsection{Diviseurs birationnels et fonctionnelles divisorielles}

Soit $k$ un corps et soit $X$ un $k$-sch\'ema propre et int\`egre.
Pour simplifier la discussion, on suppose que $X$ est normal,
de sorte qu'un diviseur de Cartier sur~$X$ n'est autre qu'un diviseur
sur~$X$ qui est localement repr\'esent\'e comme le diviseur 
d'une fonction rationnelle (non nulle).
Les diviseurs birationnels (ou b-diviseurs) 
introduits par~\textcite{Shokurov-1996}
sont les diviseurs de Cartier sur un mod\`ele birationnel propre normal 
arbitraire~$X'$ de~$X$, modulo l'identification de~$D$ et~$\pi^*(D)$
si $\pi \colon X''\to X'$ est un $X$-morphisme propre birationnel
et $D$ est un diviseur de Cartier sur~$X'$.
Plus synth\'etiquement,  on a
\[ \bDiv(X) = \varinjlim_{X'} \Div(X'). \]
On peut m\^eme se restreindre au syst\`eme cofinal des mod\`eles projectifs.

Le groupe ab\'elien~$\bDiv(X)$ est muni d'une relation d'ordre pour laquelle
les \'elements positifs
sont ceux qui sont repr\'esent\'es par un diviseur de Cartier effectif 
sur un mod\`ele ad\'equat.
Ce groupe ab\'elien ordonn\'e est r\'eticul\'e:
lorsque deux diviseurs birationnels sont repr\'esent\'es sur un m\^eme mod\`ele
par des diviseurs de Cartier dont les composantes irr\'eductibles
sont deux \`a deux disjointes ou \'egales, leur borne
inf\'erieure est d\'efinie en prenant, pour chacune d'entre elle,
la borne inf\'erieure des coefficients ; on se ram\`ene \`a ce cas
par \'eclatement.  
L'exemple le plus simple \'eclaire cette d\'efinition: 
consid\'erons dans $\bDiv(\P_2)$, les deux diviseurs de Cartier~$D_1$ et~$D_2$ 
correspondant aux deux axes de coordonn\'ees $x_1=0$ et $x_2=0$
dans~$\P_2$; alors $\inf(D_1,D_2)$ est d\'efini par le diviseur exceptionnel 
sur l'\'eclatement de leur point d'intersection.

Dans la suite, on utilisera plut\^ot le $\Q$-espace vectoriel
r\'eticul\'e $\bDiv(X)_\Q$ des $\Q$-diviseurs birationnels sur~$X$.

\begin{prop}\label{prop.div-bdiv}
Soit $k$ un corps et soit $X$ un $k$-sch\'ema propre, int\`egre et normal.
Il existe un unique morphisme d'espaces vectoriels r\'eticul\'es
\[ \Div_\Q(k(X)/k) \to \bDiv(X)_\Q \]
qui, pour $f\in k(X)^\times$, applique $\div(f)$ sur le diviseur  birationnel
sur~$X$ correspondant au diviseur de Cartier principal $\div(f)$ sur~$X$.
C'est un isomorphisme.
\end{prop}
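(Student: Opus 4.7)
Le plan est de factoriser la construction \`a travers le $\Q$-espace vectoriel r\'eticul\'e libre $U_\Q(F/k)$, o\`u $F = k(X)$. Par la propri\'et\'e universelle du $\Q$-espace vectoriel r\'eticul\'e libre sur le groupe ab\'elien~$F^\times$, le morphisme de groupes $F^\times \to \bDiv(X)_\Q$ qui \`a~$f$ associe le diviseur de Cartier principal~$\div(f)$ (vu comme $b$-diviseur) s'\'etend en un unique morphisme d'espaces vectoriels r\'eticul\'es $\Phi\colon U_\Q(F/k) \to \bDiv(X)_\Q$. L'unicit\'e du morphisme cherch\'e sur~$\Div_\Q(F/k)$ en r\'esulte imm\'ediatement, puisque l'image de $\div$ engendre~$\Div_\Q(F/k)$ comme espace vectoriel r\'eticul\'e.

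Pour voir que $\Phi$ se factorise \`a travers~$\Div_\Q(F/k)$, l'observation-cl\'e est que toute hypersurface int\`egre~$E$ d'un mod\`ele propre normal birationnel~$X'$ de~$X$ d\'efinit une valuation discr\`ete~$v_E\in\PVal(F/k)_*\subset\QVal(F/k)$ (la valuation divisorielle centr\'ee en~$E$), et l'on v\'erifie sur les g\'en\'erateurs~$\div(f)$ que le coefficient de~$E$ dans le $b$-diviseur~$\Phi(\gamma)$ vaut~$v_E(\gamma)$ ; cette identit\'e s'\'etend \`a tout~$U_\Q(F/k)$ par $\Q$-lin\'earit\'e et par la relation $v_E(\inf(\gamma,\gamma'))=\inf(v_E(\gamma),v_E(\gamma'))$. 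Puisqu'un $b$-diviseur est d\'etermin\'e par ses coefficients le long de toutes les hypersurfaces de tous les mod\`eles, l'annulation de~$v(\gamma)$ pour tout $v\in\QVal(F/k)$ entra\^{\i}ne~$\Phi(\gamma)=0$, et $\Phi$ se factorise en un morphisme $\bar\Phi\colon \Div_\Q(F/k) \to \bDiv(X)_\Q$.

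Pour la surjectivit\'e de~$\bar\Phi$, je repr\'esenterais un $b$-diviseur par un $\Q$-diviseur de Cartier~$D$ sur un mod\`ele projectif~$X'$ ; quitte \`a effacer les d\'enominateurs et \`a ajouter un multiple suffisant d'un diviseur ample, on peut \'ecrire $D = D_1 - D_2$ o\`u chaque~$D_i$ est engendr\'e par ses sections globales. Si $s$ est une section rationnelle de~$\mathscr O(D_i)$ de diviseur~$D_i$ et $s_0,\dots,s_n$ engendrent~$\mathscr O(D_i)$, les fonctions $f_j=s_j/s\in F^\times$ v\'erifient $\div(f_j)+D_i=\div(s_j)\geq 0$ ; la libert\'e de points-base \'etant pr\'eserv\'ee par image r\'eciproque sur tout mod\`ele birationnel, on en d\'eduit $\inf_j \div(f_j) = -D_i$ dans~$\bDiv(X)_\Q$, ce qui montre que~$D_i$, puis~$D$, appartient \`a l'image de~$\Phi$. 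Pour l'injectivit\'e, si $\bar\Phi(\gamma)=0$ alors $v_E(\gamma)=0$ pour toute valuation divisorielle ; en combinant l'identification $\PVal(F/k)\simeq \varprojlim_{X'}(X')^\an$ \'etablie dans la proposition pr\'ec\'edente avec la densit\'e classique des points de type~2 dans chaque espace de Berkovich, les $\R_+^*$-orbites des valuations divisorielles sont denses dans~$\QVal(F/k)$ ; la continuit\'e et l'homog\'en\'eit\'e de $v\mapsto v(\gamma)$ forcent alors $v(\gamma)=0$ sur~$\QVal(F/k)$ tout entier, d'o\`u $\gamma=0$ dans~$\Div_\Q(F/k)$.

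L'obstacle technique principal est la v\'erification de l'identit\'e $\inf_j \div(f_j)=-D_i$ au niveau des $b$-diviseurs, et non seulement sur le mod\`ele fix\'e~$X'$ : il faut que la libert\'e de points-base soit pr\'eserv\'ee par image r\'eciproque sur tout \'eclatement, de sorte que $\min_j v_E(\div(f_j))=-v_E(D_i)$ pour toute valuation divisorielle~$v_E$, et pas seulement pour celles centr\'ees sur les hypersurfaces de~$X'$ ; c'est automatique pour les faisceaux engendr\'es par leurs sections globales, et justifie \emph{a posteriori} le choix d'\'ecrire $D$ comme diff\'erence de diviseurs \emph{globalement engendr\'es} plut\^ot que simplement effectifs.
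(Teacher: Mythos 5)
Votre d\'emonstration est correcte et suit pour l'essentiel la m\^eme strat\'egie que la preuve (tr\`es condens\'ee) de l'article : factorisation par l'espace vectoriel r\'eticul\'e universel $U_\Q(k(X)/k)$, identification du noyau au moyen des valuations divisorielles, et surjectivit\'e obtenue en exprimant tout diviseur comme combinaison r\'eticul\'ee de diviseurs principaux issus de sections globales. Vous explicitez utilement deux points que l'article passe sous silence --- la validit\'e \emph{birationnelle} de l'identit\'e $\inf_j\div(f_j)=-D_i$, garantie par la pr\'eservation de l'engendrement global par image r\'eciproque, et la densit\'e, \`a homoth\'etie pr\`es, des valuations divisorielles dans $\QVal(k(X)/k)$, n\'ecessaire pour l'injectivit\'e ; sur ce dernier point, notez que la terminologie de points de type~2 est propre aux courbes de Berkovich sur un corps valu\'e non trivialement, et que c'est la densit\'e des points \emph{divisoriels} dans l'espace analytique sur le corps trivialement valu\'e~$k$ (ou dans l'espace de Riemann--Zariski) qu'il convient d'invoquer.
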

\begin{proof}
Soit $\theta\colon U_\Q(k(X)/k)\to\bDiv(X)_\Q$ l'unique
morphisme d'espaces vectoriels r\'eticul\'es qui,
pour tout $f\in k(X)^\times$, applique~$\div(f)$ sur le diviseur birationnel
sur~$X$ correspondant au diviseur de Cartier principal~$\div(f)$ sur~$X$.
Pour tout \'el\'ement de~$\bDiv(X)_\Q$ repr\'esent\'e par 
un diviseur de Cartier~$D$ sur un mod\`ele~$X'$ de~$X$,
le diviseur~$D$ est localement sur~$X'$ de la forme~$\div(f)$;
en combinant ces repr\'esentations, on obtient que le morphisme~$\theta$
est surjectif. 
La description des diviseurs de Cartier birationnels
en termes de valuations (divisorielles) entra\^{\i}ne
que le noyau de~$\theta$
est \'egal au noyau de l'homomorphisme canonique
de~$U_\Q(k(X)/k)$ sur~$\Div_\Q(k(X)/k)$.
\end{proof}

Compte tenu de cette proposition, les fonctionnelles divisorielles
sur~$k(X)$ correspondent aux familles $(\lambda_{X'})$ o\`u,
pour tout mod\`ele~$X'$ de~$X$, $\lambda_{X'}$ est une forme lin\'eaire
sur le groupe~$\Div(X')$ des diviseurs de Cartier sur~$X'$,
positive sur le c\^one effectif, ces formes lin\'eaires \'etant astreintes
\`a la relation de compatibilit\'e $\lambda_{X''}(\pi^*D)=\lambda_{X'}(D)$
si $\pi\colon X''\to X'$ est un $X$-morphisme et $D$ un diviseur
de Cartier sur~$X'$. On peut se limiter, si l'on veut,
aux mod\`eles projectifs~$X'$ qui dominent un mod\`ele donn\'e de~$X$.

La th\'eorie de l'intersection fournit alors des exemples g\'eom\'etriques
de fonctionnelles divisorielles. 
Consid\'erons en effet un mod\`ele projectif~$X'$ de~$X$ et soit $A$
un diviseur ample sur~$X'$.
Pour tout $X$-morphisme projectif et birationnel
$\pi\colon X''\to X'$, d\'efinissons une forme lin\'eaire $\lambda_{X''}$
sur~$\Div(X'')_\Q$ par
\[ \lambda_{X''} (D) = \pi^*A^{d-1}\cdot D, \]
o\`u on a pos\'e $d=\dim(X)=\degtr_k(k(X))$.
Ces formes lin\'eaires d\'efinissent une fonctionnelle divisorielle sur~$k(X)$.
De telles fonctionnelles divisorielles seront dites \emph{g\'eom\'etriques}.

\begin{lemm}
Soit $\lambda$ une fonctionnelle divisorielle \emph{globale} sur~$k(X)$.
Pour tout mod\`ele projectif~$X'$, la forme lin\'eaire~$\lambda_{X'}$
sur $\Div(X')$ s'annule sur
le groupe des diviseurs num\'eriquement triviaux.
\end{lemm}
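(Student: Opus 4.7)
Le plan est de combiner deux ingr\'edients: la globalit\'e de~$\lambda$ entra\^{\i}ne que $\lambda_{X'}$ ne d\'epend que de la classe d'\'equivalence lin\'eaire d'un diviseur de Cartier sur~$X'$, tandis que sa positivit\'e sur le c\^one effectif se transporte, gr\^ace \`a cette m\^eme globalit\'e, \`a toute classe contenant un repr\'esentant effectif. L'id\'ee est donc de ramener un diviseur num\'eriquement trivial~$D$ \`a ces deux propri\'et\'es en l'approchant par des diviseurs amples.

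Concr\`etement, \'etant donn\'es un diviseur num\'eriquement trivial~$D$ et un diviseur ample~$H$ sur~$X'$, je remarquerais que $H+D$ et $H-D$ sont num\'eriquement \'equivalents \`a~$H$, donc amples (par le crit\`ere de Nakai--Moishezon, ou de Kleiman). Il existe donc un entier $n\geq 1$ tel que $n(H+D)$ et $n(H-D)$ soient tous deux lin\'eairement \'equivalents \`a des diviseurs effectifs $E_+$ et $E_-$ sur~$X'$. La globalit\'e et la positivit\'e de~$\lambda_{X'}$ donnent alors les deux in\'egalit\'es $n\lambda_{X'}(H\pm D)=\lambda_{X'}(E_\pm)\geq 0$, d'o\`u
\[ \abs{\lambda_{X'}(D)} \leq \lambda_{X'}(H). \]

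Pour conclure, j'appliquerais cette in\'egalit\'e non pas \`a~$D$ mais \`a son multiple $mD$, qui reste num\'eriquement trivial pour tout entier $m\geq 1$. On en tire $\abs{m\lambda_{X'}(D)}\leq \lambda_{X'}(H)$, puis $\lambda_{X'}(D)=0$ en faisant tendre~$m$ vers l'infini. Je ne vois pas d'obstacle v\'eritable dans ce plan: il repose enti\`erement sur la caract\'erisation num\'erique du c\^one ample et sur le fait qu'un diviseur ample poss\`ede, apr\`es multiplication par un entier convenable, une section globale non nulle, r\'esultats classiques de g\'eom\'etrie alg\'ebrique projective.
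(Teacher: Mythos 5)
Votre argument est correct et co\"{\i}ncide pour l'essentiel avec celui du texte, qui applique le crit\`ere de Kleiman au diviseur $A+nD$ pour tout entier relatif~$n$ (donc ample car num\'eriquement \'equivalent \`a~$A$), en d\'eduit $n\lambda_{X'}(D)\geq-\lambda_{X'}(A)$ gr\^ace \`a la globalit\'e et \`a la positivit\'e de~$\lambda$, puis fait tendre $n$ vers~$\pm\infty$. Votre reformulation via $H\pm mD$ et le passage \`a la limite en~$m$ n'est qu'une r\'eorganisation du m\^eme raisonnement.
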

\begin{proof}
Soit $D$ un diviseur de Cartier num\'eriquement trivial sur~$X'$. 
Soit $A$ un diviseur ample sur~$X'$.
D'apr\`es le crit\`ere de Kleiman, pour tout entier relatif~$n$,
il existe un entier~$q\geq 1$ tel que le diviseur $q(A+nD)$ soit lin\'eairement
\'equivalent \`a un diviseur effectif;
soit $f\in k(X)^\times$ tel que $q(A+nD)-\div(f)$ soit effectif.
Comme $\lambda$ est globale, on a $\lambda (\div(f))=0$,
de sorte que $\lambda_{X'}(A+nD)\geq 0$.
Ainsi, $n \lambda_{X'}(D) \geq -\lambda_{X'}(A)$.
Ceci vaut pour tout $n\in\Z$; en faisant tendre~$n$ vers~$\pm\infty$,
on obtient $\lambda_{X'}(D)=0$, ce qu'il fallait d\'emontrer.
\end{proof}

\begin{rema}
Revenons sur l'exemple~\ref{exem-aw}
en supposant que $X$ est une courbe projective int\`egre et r\'eguli\`ere.
Dans ce cas, $\bDiv(X)_\Q$
est le $\Q$-espace vectoriel de base l'ensemble des points ferm\'es de~$X$.
Soit $\lambda$ une fonctionnelle divisorielle globale sur~$F$
qui est triviale sur~$k$.
D'apr\`es le lemme pr\'ec\'edent, il existe un unique nombre
r\'eel~$c$ tel que $\lambda(D)=c\deg(D)$ pour tout $D\in\bDiv(X)_\Q$.
Par d\'efinition, une fonctionnelle divisorielle est positive
en tout diviseur effectif ; on a donc $c\geq 0$.
\end{rema}

\begin{theo}[\cite{BenYaacovHrushovski-2022}]\label{theo.byh-dense}
Soit $k$ un corps et soit $X$ un $k$-sch\'ema projectif, int\`egre et normal.
L'ensemble des fonctionnelles divisorielles g\'eom\'etriques sur~$k(X)$ est dense
dans l'espace des fonctionnelles divisorielles sur~$k(X)$
qui sont triviales sur~$k$, muni de la topologie faible.
\end{theo}

Compte tenu de la d\'efinition de cette topologie,
et quitte \`a remplacer~$X$ par un mod\`ele projectif convenable,
il s'agit de d\'emontrer qu'une forme lin\'eaire sur l'espace
des $\Q$-diviseurs de Cartier sur~$X$ qui est positive sur le c\^one effectif 
et nulle sur le sous-espace des diviseurs de Cartier 
num\'eriquement \'equivalents \`a z\'ero
peut \^etre approch\'ee
par des formes lin\'eaires du type $D\mapsto (A^{d-1}\cdot D)$,
o\`u $A$ est la classe d'un $\Q$-diviseur ample.
\textcite{BenYaacovHrushovski-2022} d\'eduisent ce r\'esultat
d'un \'enonc\'e plus pr\'ecis.

Soit donc $X$ une vari\'et\'e projective, int\`egre, sur un corps~$k$;
soit $d$ sa dimension. Soit~$Z_1(X)$ le groupe des classes
d'\'equivalence lin\'eaire de cycles de dimension~$1$
sur~$X$ et soit~$\Div(X)$ le groupe des classes d'\'equivalence lin\'eaire
de diviseurs de Cartier sur~$X$.
Le degr\'e d'un diviseur de Cartier sur une courbe propre munit le produit
les espaces vectoriels $\Div(X)_\R$ et $Z_1(X)_\R$ 
d'une forme bilin\'eaire. 

Notons~$N^1(X)$ et $N_1(X)$ les quotients de ces espaces
par les noyaux de cette forme (\'equivalence num\'erique); 
ils sont de dimension finie et la forme bilin\'eaire
induite par le degr\'e identifie chacun au dual de l'autre.

Le \emph{volume} d'un diviseur de Cartier~$D$ sur~$X$ est la limite sup\'erieure
\[ \vol(D) = \varlimsup_{n\to+\infty} (d!/n^d) \dim_k H^0(X, \mathscr O(nD)). \]
C'est en fait une limite.
Sur un corps alg\'ebriquement clos,
ce r\'esultat a d'abord  \'et\'e d\'emontr\'e en caract\'eristique z\'ero
par~\textcite{Fujita-1994}, il a ensuite \'et\'e \'etendu au cas
g\'en\'eral 
par~\textcite{Takagi-2007} via les alt\'erations de~\textcite{deJong-1996},
et par~\textcite{LazarsfeldMustata-2009} par la m\'ethode des \og corps d'Okounkov \fg;
sur un corps arbitraire, la preuve est due \`a~\textcite{Cutkosky-2014}.

La fonction volume s'\'etend en une fonction continue, 
$d$-homog\`ene, sur~$N^1(X)$, encore not\'ee~$\vol$;
la preuve donn\'ee par~\textcite[corollaire~2.2.45]{Lazarsfeld-2004}
vaut sur un corps arbitraire.

L'espace $N^1(X)$ contient deux c\^ones convexes fondamentaux :
\begin{itemize}
\item Le c\^one effectif, engendr\'e par les classes de diviseurs effectifs.
Son adh\'erence est appel\'ee le c\^one \emph{pseudo-effectif}.
Son int\'erieur est le c\^one \emph{gros} : une classe~$\alpha$
est grosse si et seulement si $\vol(\alpha)>0$;
\item Le c\^one ample, engendr\'e par les classes de diviseurs amples;
il est ouvert et son adh\'erence
est le c\^one \emph{num\'eriquement effectif.}
Sur ce c\^one, le volume se calcule par le th\'eor\`eme d'Hilbert--Samuel:
$\vol(\alpha)=(\alpha^d)$; c'est en particulier une application polynomiale.
\end{itemize}

Dans l'espace~$N_1(X)$, on d\'efinit le c\^one pseudo-effectif
comme l'adh\'erence du c\^one convexe engendr\'e par
les classes de courbes irr\'eductibles.

Ces c\^ones sont saillants. On munit alors~$N_1(X)$ (resp.~$N^1(X)$)
de la structure d'espace vectoriel ordonn\'e 
pour laquelle les \'el\'ements positifs sont les \'el\'ements du c\^one
pseudo-effectif.

Pour tout morphisme projectif et birationnel $\pi\colon X'\to X$,
et toute classe grosse $\alpha\in N^1(X)$, la classe~$\pi^*\alpha$
est encore grosse, donc admet une d\'ecomposition $\pi^*\alpha=\beta+\gamma$,
o\`u $\beta$ est ample et $\gamma$ est pseudo-effective;
on a donc $\beta\leq\pi^*\alpha$.
Si $\alpha_1,\dots,\alpha_{d-1}\in N^1(X)$ sont des classes grosses, 
on d\'emontre que lorsque $\pi\colon X'\to X$ parcourt
la classe des morphismes projectifs et birationnels,
et $\beta_1,\dots,\beta_{d-1}\in N^1(X')$ sont des classes amples
telles que
$\beta_{p}\leq \pi^*\alpha_{p}$ pour tout~$p$,
alors les classes de cycle
\[ \pi_*(\beta_1\dots \beta_{d-1} ) \in N_1(X) \]
poss\`edent une borne sup\'erieure ; 
on la note $\langle \alpha_1\dots\alpha_{d-1}\rangle$:
c'est l'\emph{intersection positive} des classes $\alpha_1,\dots,\alpha_{d-1}$.

\begin{prop}
Pour tout $\alpha\in N^1(X)$ dans le c\^one gros, la fonction~$\vol$
est diff\'erentiable en~$\alpha$ et l'on a
\[ D\vol(\alpha)  = d \langle \alpha^{d-1}\rangle. \]
De plus, on a 
\[ \vol (\alpha) = \langle \alpha^{d-1}\rangle \cdot \alpha. \]
\end{prop}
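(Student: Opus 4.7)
La strat\'egie consiste \`a \'etablir la diff\'erentiabilit\'e de $\vol$ en $\alpha$ avec d\'eriv\'ee $d\,\langle\alpha^{d-1}\rangle$, l'identit\'e $\vol(\alpha)=\langle\alpha^{d-1}\rangle\cdot\alpha$ \'etant alors une cons\'equence imm\'ediate de la relation d'Euler appliqu\'ee \`a la fonction $d$-homog\`ene $\vol$. Comme le c\^one gros est ouvert, la classe $\alpha+t\eta$ y reste pour tout $\eta\in N^1(X)$ et tout $|t|$ assez petit.

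Pour la minoration de la d\'eriv\'ee directionnelle en direction $\eta$, je proc\'ederais par approximation de Fujita : \'etant donn\'e $\epsilon>0$, on choisit un morphisme projectif birationnel $\pi\colon X'\to X$ et une d\'ecomposition $\pi^*\alpha=\beta+\gamma$, avec $\beta$ ample et $\gamma$ pseudo-effective, telle que, simultan\'ement, $\beta^d\geq\vol(\alpha)-\epsilon$ et $\pi_*(\beta^{d-1})\cdot\eta\geq\langle\alpha^{d-1}\rangle\cdot\eta-\epsilon$. Ceci est possible car le syst\`eme des approximations de Fujita est filtrant croissant et fournit \`a la limite tant le volume que l'intersection positive. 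La classe $\beta+t\pi^*\eta$ reste alors ample pour $|t|$ petit, et la diff\'erence $\pi^*(\alpha+t\eta)-(\beta+t\pi^*\eta)=\gamma$ est pseudo-effective, d'o\`u
\[ \vol(\alpha+t\eta)\geq (\beta+t\pi^*\eta)^d=\beta^d+t\,d\,\pi_*(\beta^{d-1})\cdot\eta+O(t^2) \]
par monotonie du volume et calcul polynomial dans le c\^one nef. En faisant tendre $\epsilon$ vers~$0$, il vient $\vol(\alpha+t\eta)\geq\vol(\alpha)+t\,d\,\langle\alpha^{d-1}\rangle\cdot\eta+O(t^2)$ pour $t\geq 0$, et l'in\'egalit\'e analogue pour $t\leq 0$ en l'appliquant \`a la direction $-\eta$.

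Pour la majoration correspondante, j'invoquerais la concavit\'e de $\vol^{1/d}$ sur le c\^one gros (in\'egalit\'e de Brunn--Minkowski, cons\'equence des in\'egalit\'es de Khovanskii--Teissier \'etendues aux classes grosses via l'approximation de Fujita). Cette concavit\'e entra\^{\i}ne l'existence des d\'eriv\'ees directionnelles unilat\'erales de~$\vol$ en $\alpha$ et fournit une majoration compatible avec la minoration pr\'ec\'edente ; en combinant la d\'eriv\'ee en $\eta$ et celle en $-\eta$, on force l'\'egalit\'e, d'o\`u la diff\'erentiabilit\'e de $\vol$ en $\alpha$ et la formule annonc\'ee pour sa d\'eriv\'ee.

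Le point d\'elicat me para\^{\i}t \^etre la r\'ealisation \emph{simultan\'ee}, dans la minoration, des deux approximations $\beta^d\approx\vol(\alpha)$ et $\pi_*(\beta^{d-1})\cdot\eta\approx\langle\alpha^{d-1}\rangle\cdot\eta$ par une m\^eme approximation de Fujita : cette propri\'et\'e repose sur le caract\`ere filtrant croissant du syst\`eme des approximations, relativement \`a l'ordre pseudo-effectif, qui n'est pas tout \`a fait formel.
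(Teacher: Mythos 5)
La proposition n'est pas d\'emontr\'ee dans le texte : l'auteur renvoie purement et simplement \`a Boucksom--Favre--Jonsson (2008) et \`a ses extensions par Cutkosky et Ben Yaacov--Hrushovski. Votre plan est donc \`a comparer \`a la preuve de ces r\'ef\'erences, dont il reprend l'architecture g\'en\'erale (minoration par approximation de Fujita, majoration par un argument de concavit\'e/sym\'etrie, identit\'e $\vol(\alpha)=\langle\alpha^{d-1}\rangle\cdot\alpha$ via Euler une fois la diff\'erentiabilit\'e acquise). La majoration via la concavit\'e de $\vol^{1/d}$ est une variante l\'egitime de l'argument de BFJ (qui, eux, r\'eappliquent leur minoration uniforme au point d\'eplac\'e $\alpha+t\eta$ dans la direction $-\eta$ et utilisent la continuit\'e de $\alpha\mapsto\langle\alpha^{d-1}\rangle$), et la relation d'Euler donne bien la seconde identit\'e \emph{si} la premi\`ere partie est \'etablie.

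Le point que vous signalez comme d\'elicat (r\'ealiser simultan\'ement $\beta^d\approx\vol(\alpha)$ et $\pi_*(\beta^{d-1})\cdot\eta\approx\langle\alpha^{d-1}\rangle\cdot\eta$) est en fait le moins probl\'ematique : les deux quantit\'es sont des limites le long du m\^eme syst\`eme filtrant. La vraie lacune est dans le passage \`a la limite de votre minoration. Pour $\eps$ fix\'e, votre in\'egalit\'e s'\'ecrit
\[ \vol(\alpha+t\eta)\ \geq\ \beta^d+t\,d\,\pi_*(\beta^{d-1})\cdot\eta-C_\eps t^2
\ \geq\ \vol(\alpha)-\eps+t\,d\bigl(\langle\alpha^{d-1}\rangle\cdot\eta-\eps\bigr)-C_\eps t^2, \]
et le d\'eficit constant $\beta^d-\vol(\alpha)\in[-\eps,0]$, divis\'e par $t$, envoie le quotient diff\'erentiel vers $-\infty$ lorsque $t\to0^+$ \`a $\eps$ fix\'e ; on ne peut pas non plus prendre $\eps=o(t)$ sans contr\^oler la croissance de $C_\eps$ (les termes $\beta^{d-j}(\pi^*\eta)^j$ sur des mod\`eles de plus en plus profonds), ce qui n'a rien d'\'evident puisque $\pi^*\alpha$ n'est pas nef. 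C'est pr\'ecis\'ement le contenu de l'estim\'ee cl\'e de Boucksom--Favre--Jonsson (minoration avec erreur quadratique \emph{uniforme}), dont la preuve utilise comme ingr\'edient la relation d'orthogonalit\'e $\vol(\alpha)=\langle\alpha^{d-1}\rangle\cdot\alpha$ --- exactement l'identit\'e que vous proposez de d\'eduire \emph{a posteriori} de la relation d'Euler. Il faut donc ou bien \'etablir cette orthogonalit\'e ind\'ependamment (via l'in\'egalit\'e de type Morse $\vol(\beta-\gamma)\geq\beta^d-d\,\beta^{d-1}\cdot\gamma$, comme dans BFJ), ou bien produire une autre preuve de l'uniformit\'e de l'erreur ; en l'\'etat, la minoration de la d\'eriv\'ee directionnelle n'est pas acquise.
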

Cette proposition est d\'emontr\'ee par \textcite{BoucksomFavreJonsson-2008}
lorsque le corps~$k$ est alg\'ebriquement clos de caract\'eristique z\'ero;
\textcite{Cutkosky-2015} et \textcite{BenYaacovHrushovski-2022} ont
\'etendu la preuve \`a tout corps~$k$.

Nous pouvons maintenant terminer la d\'emonstration du th\'eor\`eme~\ref{theo.byh-dense}.
Les arguments donn\'es par~\textcite{BenYaacovHrushovski-2022}
anticipent ceux de~\textcite[th\'eor\`eme~C]{DangFavre-2022} 
en caract\'eristique z\'ero.

Soit $\lambda$ une forme lin\'eaire sur~$N^1(X)$ 
qui est positive sur le c\^one pseudo-effectif. Quitte \`a lui ajouter
une multiple d'une forme lin\'eaire du type $\gamma\mapsto \alpha^{d-1}\gamma$,
o\`u $\alpha$ est ample et petit, on peut supposer que $\lambda$
est strictement positive sur tout \'el\'ement non nul du c\^one pseudo-effectif.
La fonction $\alpha\mapsto \vol(\alpha)/\lambda(\alpha)^n$
sur le c\^one gros est continue, homog\`ene, et s'annule au bord de ce c\^one ; 
elle atteint donc son maximum en une classe $\alpha$.
En \'ecrivant que la diff\'erentielle du volume s'annule en~$\alpha$,
on obtient  l'\'egalit\'e
\[ \lambda = \frac{\lambda(\alpha)}{\vol(\alpha)} \langle \alpha^{d-1}\rangle.\]
Par d\'efinition de l'intersection positive, 
la forme lin\'eaire $\langle\alpha^{d-1}\rangle$ est limite de formes
lin\'eaires de la forme $\pi_*\beta^{d-1}$, o\`u $\pi\colon X'\to X$
est un morphisme projectif et birationnel et $\beta$ est une classe
ample sur~$X'$, d'o\`u le th\'eor\`eme.

\begin{rema}
On peut choisir pour $\beta$ des classes de $\Q$-diviseurs amples.
D'apr\`es le th\'eor\`eme de Bertini, $\pi_*(\beta^{d-1})$ est alors repr\'esent\'e
par un cycle de la forme~$w[C]$, o\`u $C$ est une courbe irr\'eductible  sur~$X$
et $w$ est un nombre rationnel strictement positif.
De telles classes de courbes sont \emph{mobiles}:
pour toute sous-vari\'et\'e stricte~$Y$ de~$X$, 
on peut choisir une telle courbe~$C$ qui ne soit contenue
dans aucune composante de~$Y$.
On retrouve ainsi le th\'eor\`eme de~\textcite{BoucksomDemaillyPaunEtAl-2013} que le dual du c\^one pseudo-effectif de~$N^1(X)_\R$
est le c\^one convexe ferm\'e
de~$N_1(X)_\R$ engendr\'e par les classes de courbes mobiles.

En passant \`a la limite, cela fournit \'egalement un hom\'eomorphisme
de l'espace des fonctionnelles divisorielles sur~$k(X)$
sur la limite projective des c\^ones des courbes mobiles dans $N_1(X')_\R$,
o\`u~$X'$ parcourt la classe des mod\`eles projectifs de~$X$
\parencite[th\'eor\`eme~11.6]{BenYaacovHrushovski-2022}.
\end{rema}

\subsection{G\'eom\'etrie d'Arakelov}

Soit $F$ un corps de type fini sur~$\Q$. On souhaite
\'etudier les structures de corps globalement valu\'es sur~$F$
qui induisent la structure standard sur~$\Q$.
Les r\'esultats du paragraphe pr\'ec\'edent supposent que la structure
globalement valu\'ee est triviale sur un sous-corps fix\'e
et sont donc sans effet. 
Dans ce paragraphe, nous d\'ecrivons 
les th\'eor\`emes de~\textcite{Szachniewicz-2023}
compagnons des pr\'ec\'edents.

Le corps~$F$ est le corps des fonctions d'un $\Z$-sch\'ema projectif 
et plat~$\mathscr X$, int\`egre.  
L'analogie classique entre corps de nombres et corps de fonctions
am\`ene \`a consid\'erer ce sch\'ema comme \og fibr\'e\fg au-dessus
de la courbe arithm\'etique $\Spec(\Z)$, mais le caract\`ere affine
de cette derni\`ere est un obstacle au d\'eveloppement
d'une th\'eorie de l'intersection sur~$\mathscr X$. La g\'eom\'etrie d'Arakelov
r\'esout cette question en adjoignant aux objets usuels
de la th\'eorie de l'intersection (diviseurs, fibr\'es en droites, cycles\dots)
des structures analytiques (fonctions de Green, m\'etriques, courants de Green\dots)
sur l'espace analytique complexe $\mathscr X(\C)$.

Initi\'ee dans le cas des surfaces arithm\'etiques par \textcite{Arakelov-1974a}, 
la th\'eorie de l'intersection arithm\'etique a \'et\'e construite 
en dimension arbitraire par~\textcite{GilletSoule-1990};
citons aussi~\textcite{Zhang-1995b} pour l'\'etude de l'amplitude dans ce cadre,
et \textcite{Yuan-2008,Chen-2011} pour celle du volume.

Soit donc $\mathscr X$ un $\Z$-sch\'ema projectif et plat, int\`egre,
surjectif sur~$\Spec(\Z)$.
Pour simplifier la discussion, on le suppose normal. 
Soit $\kappa(\mathscr X)$ le corps des fonctions de~$\mathscr X$
et soit~$d$ son degr\'e de transcendance sur~$\Q$;
on a donc $\dim(\mathscr X)=d+1$.

\begin{defi}
Si $D$ est un diviseur de Cartier sur~$\mathscr X$,
une \emph{fonction de Green} pour~$D$ est
une fonction $g_D\colon \mathscr X(\C)\setminus\abs D\to \R$
telle que pour tout ouvert~$U$ de~$\mathscr X$
et toute fonction m\'eromorphe inversible $f\in\kappa(\mathscr X)^\times$
d\'efinissant~$D$ sur~$U$, la fonction $g_D+\log \abs f$
est la restriction d'une fonction sur $U(\C)$ qui est continue et
invariante par conjugaison complexe.
\end{defi}

Alors, le fibr\'e en droites $\mathscr O_{\mathscr X}(D)$
poss\`ede une unique m\'etrique continue pour laquelle la section 
m\'eromorphe canonique a norme~$e^{-g_D}$ hors de~$\abs D$.

Une fonction de Green pour le diviseur nul est une fonction continue
sur~$\mathscr X(\C)$ et invariante par la conjugaison complexe;
en g\'en\'eral, l'ensemble des fonctions de Green pour~$D$ est un espace
affine sous cet espace vectoriel.

Si $(D,g_D)$ et~$(E,g_E)$ sont des diviseurs de Cartier
sur~$\mathscr X$ munis de fonctions de Green, la fonction $g_D+g_E$
sur $\mathscr X(\C) \setminus (\abs D\cup \abs E)$ se prolonge
de mani\`ere unique en une fonction de Green pour~$D+E$.
Cela revient aussi au produit tensoriel des fibr\'es en droites
munis de m\'etriques continues.

\begin{defi}
Le groupe $\hDiv(\mathscr X)$ est le $\Z$-module libre
de base l'ensemble des couples $(D,g_D)$, o\`u $D$ est un diviseur de Cartier
irr\'eductible sur~$\mathscr X$ et $g_D$ une fonction de Green pour~$D$.
Ses \'el\'ements sont appel\'es \emph{diviseurs arithm\'etiques.}
\end{defi}

On d\'efinit de m\^eme
le $\Q$-espace vectoriel $\hDiv_\Q(\mathscr X)$
et le $\R$-espace vectoriel $\hDiv_\R(\mathscr X)$;
leurs \'el\'ement sont respectivement appel\'es
$\Q$-diviseurs arithm\'etiques et $\R$-diviseurs arithm\'etiques.
L'espace $\hDiv_\Q(\mathscr X)$ s'identifie \`a $\hDiv(\mathscr X)\otimes_\Z\Q$,
mais on prendra garde\footnote{%
Voir la note~12, p.~276, de~\textcite{Bost-1999}.}
que 
$\hDiv_\R(\mathscr X)$ n'est pas \'egal \`a $\hDiv(\mathscr X)\otimes_\Q\R$.

Pour tout $f\in\kappa(\mathscr X)^\times$, 
on d\'efinit un diviseur arithm\'etique en posant 
\[ \hdiv(f)=(\div(f),\log \abs f^{-1}).\]

On en d\'eduit une application lin\'eaire de
$\kappa(\mathscr X)^\times_\R$ dans~$\hDiv_\R(\mathscr X)$,
encore not\'ee $\hdiv$.

On munit l'espace des $\R$-diviseurs arithm\'etiques
de la topologie localement convexe la moins fine 
qui co\"{\i}ncide avec la topologie localement convexe canonique 
sur ses sous-espaces vectoriels de dimension finie, 
et qui induit la topologie de la convergence compacte 
sur le sous-espace $\mathscr C(\mathscr X(\C)/\mathord\sim;\R)$.

On dit qu'un \'el\'ement $(D,g_D)$ de $\hDiv_\R(\mathscr X)$ est \emph{effectif}
si $D\geq 0$ et $g_D\geq 0$. 
Les diviseurs arithm\'etiques effectifs
forment un c\^one convexe ferm\'e dans~$\hDiv_\R(\mathscr X)$.

\subsubsection*{Volume arithm\'etique}

Pour tout $(D,g_D)\in\hDiv_\R(\mathscr X)$, on pose
\[ \widehat H^0(\mathscr X, (D,g_D)) = \{ f\in\kappa(\mathscr X)\sozat
      (D,g_D) + \hdiv(f)\geq 0 \} \cup \{0\}. \]

Lorsque $(D,g_D)\in\hDiv(\mathscr X)$,
cet ensemble correspond \`a celui des sections globales du fibr\'e
en droites $\mathscr O_{\mathscr X}(D)$ de norme au plus~$1$ en tout point,
pour la m\'etrique continue correspondant \`a~$g_D$, c'est-\`a-dire
la boule unit\'e d'un espace vectoriel norm\'e de dimension finie;
il est en particulier fini.

On en d\'eduit que cette propri\'et\'e de finitude vaut en g\'en\'eral, 
et l'on pose
\[ \widehat h^0(\mathscr X, (D,g_D)) 
= \log (\Card(\widehat H^0(\mathscr X,(D,g_D)))). \]
On d\'efinit alors le \emph{volume arithm\'etique} de $(D,g_D)$ par l'expression
\begin{equation}
 \hvol (D,g_D) = \varlimsup_n \frac{(d+1)!}{n^{d+1}} \widehat h^0(\mathscr X, n(D,g_D)).
\end{equation}
Cette expression est finie et ne d\'epend que de la classe de $(D,g_D)$
modulo les diviseurs de la forme $\hdiv(f)$.
Lorsque $(D,g_D)\in\hDiv_\Q(\mathscr X)$, \textcite{Chen-2010} a d\'emontr\'e 
que cette limite sup\'erieure est une limite.
En outre, la fonction~$\hvol$ est continue sur $\hDiv_\R(\mathscr X)$
\parencite[th\'eor\`eme~4.4]{Moriwaki-2009b}
et homog\`ene de degr\'e~$(d+1)$.

Si $(E,g_E)$ est effectif, 
on a l'in\'egalit\'e $\hvol(D,g_D)\leq \hvol(D+E,g_D+g_E)$ 
pour tout diviseur arithm\'etique $(D,g_D)$.

On dit que le diviseur arithm\'etique~$(D,g_D)$ est gros
si l'on a $\hvol(D,g_D)>0$ ; on dit qu'il est pseudo-effectif
si $(D,g_D)+\widehat B$ est gros pour tout diviseur arithm\'etique~$\widehat B$
qui est gros.

Dans certains cas, ces volumes arithm\'etiques s'epxriment
en termes de la th\'eorie de l'accouplement d'intersection arithm\'etique. 
Une difficult\'e est que cette th\'eorie n'est pas d\'efinie sur tout $\hDiv_\R(\mathscr X)$.

\subsubsection*{Th\'eorie de l'intersection arithm\'etique}
Un diviseur arithm\'etique $(D,g_D)$ est dit \emph{lisse}
si pour tout ouvert~$U$ de~$\mathscr X$ et toute fonction 
m\'eromorphe inversible~$f$ d\'efinissant~$D$ sur~$U$,
la fonction $g_D+\log \abs f$ est la restriction
d'une fonction lisse sur~$U(\C)$. Cela entra\^{\i}ne que $g_D$
est lisse sur $\mathscr X(\C)\setminus \abs D$
et que la $(1,1)$-forme $\ddc g_D$ sur cet ouvert se prolonge
en une forme lisse sur~$\mathscr X(\C)$, que nous noterons~$\omega_{(D,g_D)}$.
Pour toute $f\in\kappa(\mathscr X)^\times$,
le diviseur arithm\'etique $\hdiv(f)$ est lisse,
et la forme $\omega_{\hdiv(f)}$ est nulle.
En g\'en\'eral, si $(D,g_D)$ est un diviseur arithm\'etique lisse,
la formule de Poincar\'e--Lelong
\[ \ddc g_D + \delta_D = [\omega_{(D,g_D)}] \]
exprime que $g_D$ est un \emph{courant de Green} pour le cycle~$D$
au sens de~\textcite{GilletSoule-1990}.

La th\'eorie de l'intersection arithm\'etique 
d\'efinie par~\textcite{GilletSoule-1990} 
permet de munir l'espace $\hDiv(\mathscr X)$ 
des diviseurs arithm\'etiques lisses sur~$\mathscr X$
d'une application $(d+1)$-lin\'eaire sym\'etrique \`a valeurs
r\'eelles, nulle sur les diviseurs du type $\hdiv(f)$.
En fait, \textcite{GilletSoule-1990} construisent
une th\'eorie g\'en\'erale d'intersection de \og cycles arithm\'etiques \fg,
mais supposent la r\'egularit\'e de~$\mathscr X$ ; 
quitte \`a consid\'erer des coefficients rationnels,
l'utilisation d'alt\'erations \parencite{deJong-1996} permet d'\'eviter
cette hypoth\`ese.

Dans le cas des diviseurs, \textcite{Faltings-1992} avait montr\'e 
comment la formule de r\'ecurrence sous-jacente 
\`a la construction de~\textcite{GilletSoule-1990} permet 
\'egalement d'\'eviter cette hypoth\`ese.
Soit $(D_0,g_0),\dots,(D_d,g_d)$ des $\R$-diviseurs arithm\'etiques;
pour tout~$p$, notons $\omega_p=\omega_{(D_p,g_p)}$.
Lorsque $D_0$ est contenu dans 
la fibre de~$\mathscr X$ au-dessus de l'id\'eal maximal~$\langle p\rangle$
de~$\Z$, l'intersection arithm\'etique des~$(D_i,g_i)$ est donn\'ee par
\[ (D_0,g_0)\cdots (D_d, g_d) = 
  \deg (D_1|_{D_0} \cdot D_d |_{D_0}) \log(p) + \int_{\mathscr X(\C)} g_0 
   \omega_1 \wedge \cdots \wedge \omega_d ,\] 
formule dans laquelle les restrictions~$D_p|_{D_0}$
des $\R$-diviseurs de Cartier~$D_p$ \`a~$D_0$
est d\'efinie \`a \'equivalence lin\'eaire pr\`es.
En particulier, on a 
\[ (0,g_0)(D_1,g_1)\cdots (D_d,g_d)
 = \int_{\mathscr X(\C)} g_0 
   \omega_1 \wedge \cdots \wedge \omega_d .\] 
Lorsque $D_0$ est irr\'eductible, surjectif sur~$\Spec(\Z)$,
et n'est contenu dans aucune composante de~$D_1,\dots,D_d$,
on a
\[ (D_0,g_0)\cdots (D_d, g_d) = 
   ((D_1,g_1)|_{D_0})\cdots (D_d,g_D)|_{D_0}
      + \int_{\mathscr X(\C)} g_0 
   \omega_1 \wedge \cdots \wedge \omega_d .\] 
Dans le cas g\'en\'eral, il faut modifier $(D_0,g_0)$
en lui ajoutant un diviseur arithm\'etique de la forme~$\hdiv(f)$
pour se ramener \`a cette situation.

Cette formule montre \'egalement que l'on peut d\'efinir
cette intersection arithm\'etique lorsque au plus un des $\R$-diviseurs
arithm\'etiques consid\'er\'es (mis en premi\`ere position) n'est pas lisse.

On dit qu'un diviseur arithm\'etique lisse 
$(D,g_D)\in\hDiv(\mathscr X)$ est \emph{nef}
si le $\R$-diviseur de Cartier~$D$ est relativement nef et si
la forme~$\omega_{(D,g_D)}$ est positive. 
Dans ce cas, \textcite[Corollaire~5.5]{Moriwaki-2009b} a d\'emontr\'e
l'\'egalit\'e suivante (\og Hilbert-Samuel arithm\'etique \fg):
\begin{equation}
\hvol (D,g_D) = (D,g_D)^{d+1}.
\end{equation}

\subsubsection{Diff\'erentiabilit\'e du volume arithm\'etique}
Le th\'eor\`eme suivant est l'analogue arithm\'etique des propri\'et\'es
du volume en g\'eom\'etrie alg\'ebrique.
Il est d\^u \`a \textcite{Chen-2011} dans le cas des~$\Q$-diviseurs,
et \`a \textcite{Ikoma-2015} en g\'en\'eral.
Leurs d\'emonstrations sont inspir\'ees par le th\'eor\`eme g\'eom\'etrique.
Il s'agit en particulier de d\'efinir un analogue 
de l'intersection positive pour les diviseurs arithm\'etiques.

Si $(D_0,g_0),\dots,(D_d,g_d)$ sont des $\R$-diviseurs arithm\'etiques
tels que $(D_{k},g_{k})$ est lisse et nef pour $k\geq p+1$,
leur \emph{intersection positive}
\[ \langle (D_1,g_1)\cdots (D_p,g_p)\rangle \cdot
    (D_{p+1},g_{p+1})\cdots (D_d,g_d) \]
est d\'efinie comme la borne sup\'erieure des intersections
\[ (D'_1,g'_1)\cdots (D'_p,g'_p) \pi^*(D_{p+1},g_{p+1})\cdots \pi^*(D_d,g_d)\]
o\`u $\pi$ parcourt la classe des morphismes projectifs
et birationnels $\pi\colon\mathscr X'\to\mathscr X$
et o\`u, pour $k\in\{0,\dots,p\}$,
$(D'_k,g'_k)$ est un $\Q$-diviseur arithm\'etique sur~$\mathscr X'$
tel que $(D'_k,g'_k)\leq \pi^*(D_k,g_k)$.
Cette expression est multi-additive en $(D_k,g_k)$ pour $k\geq p+1$
et s'\'etend de mani\`ere unique en une forme multilin\'eaire sur l'espace
vectoriel des diviseurs arithm\'etiques lisses.
Lorsque $k=d$, elle s'\'etend de mani\`ere unique en une forme lin\'eaire
continue sur $\hDiv_\R(\mathscr X)$.

\begin{theo}
Soit $(D,g_D)\in\hDiv_\R(\mathscr X)$ un $\R$-diviseur arithm\'etique gros
et soit $(E,g_E)\in\hDiv_\R(\mathscr X)$ un $\R$-diviseur arithm\'etique.
La fonction  $t\mapsto \hvol ((D,g_D)+t(E,g_E))$ est d\'erivable en $t=0$,
de d\'eriv\'ee
\[ (d + 1) \langle (D,g_D)^d\rangle \cdot (E,g_E). \]
On a de plus:
\[ \hvol(D,g_D) = \langle (D,g_D)^{d+1}\rangle = \langle (D,g_D)^d\rangle \cdot (D,g_D). \]
\end{theo}

\subsection{G\'eom\'etrie d'Arakelov birationnelle}

Soit $F$ un corps de type fini sur~$\Q$; notons~$d=\degtr_\Q(F)$.
Soit $\hDiv_\Q(F)$ la limite inductive des espaces vectoriels totalement
ordonn\'es
$\hDiv_\Q(\mathscr X)$, lorsque $\mathscr X$ parcourt la classe ordonn\'ee,
filtrante \`a gauche,
des \emph{mod\`eles} de~$F$,
c'est-\`a-dire des 
$\Z$-sch\'emas projectifs et plats~$\mathscr X$, int\`egres et normaux,
munis d'un isomorphisme $F\simeq \kappa(\mathscr X)$.
Il est r\'eticul\'e: la d\'emonstration est analogue au cas g\'eom\'etrique,
il faut en outre observer que la borne inf\'erieure de deux fonctions de Green
fournit une fonction de Green pour la borne inf\'erieure
des diviseurs de Cartier correspondants.

Pour tout $f\in F\times$,
les diviseurs arithm\'etiques $\hdiv(f)$ sur les diff\'erents mod\`eles~$\mathscr X$
fournissent un \'el\'ement de~$\hDiv_\Q(\mathscr X)$
que nous notons encore $\hdiv(f)$.

\begin{prop}
Il existe un unique morphisme d'espaces vectoriels r\'eticul\'es
\[ \Div_\Q(F) \to \hDiv_\Q(\mathscr X) \]
qui applique $\div(f)$ sur $\hdiv(f)$ pour tout $f\in F^\times$.
Ce morphisme induit une bijection entre
fonctionnelles divisorielles~$\lambda$ sur~$F$
qui induisent la structure canonique sur~$\Q$
et
formes lin\'eaires $\lambda\colon \hDiv_\R(\mathscr X)\to\R$
qui sont positives en tout diviseur arithm\'etique effectif,
nulles en tout diviseur arithm\'etique principal
et v\'erifient $\lambda((0,1))=1$.
\end{prop}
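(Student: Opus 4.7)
La d\'emonstration proc\`ede selon le sch\'ema suivant. On commence par construire un morphisme d'espaces vectoriels r\'eticul\'es $\theta\colon U_\Q(F)\to\hDiv_\Q(F)$ envoyant $\div(f)$ sur~$\hdiv(f)$ pour tout $f\in F^\times$, gr\^ace \`a la propri\'et\'e universelle de~$U_\Q(F)$ comme $\Q$-espace vectoriel r\'eticul\'e universel associ\'e au groupe~$F^\times$. L'\'etape cruciale consiste \`a montrer que $\theta$ transforme les \'el\'ements positifs de~$U_\Q(F)$ en diviseurs arithm\'etiques effectifs : appliqu\'ee \`a un $\gamma$ tel que $v(\gamma)=0$ pour tout $v\in\QVal(F)$ (donc tel que $\gamma$ et~$-\gamma$ sont tous deux positifs), cette propri\'et\'e entra\^{\i}nera $\theta(\gamma)=0$, et donnera ainsi la factorisation d\'esir\'ee \`a travers~$\Div_\Q(F)$.

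L'argument pour la positivit\'e utilisera la caract\'erisation de la proposition~\ref{prop.UQF-pos}~(iii) : \`a $\gamma\in U_\Q(F)$ positif et $c>0$ on associe un entier $p\geq 1$ tel que $p\gamma\in\Gamma_{pc}$, c'est-\`a-dire $p\gamma = \inf_i \div(a_i) - \inf_j \div(b_j)$ avec $a_i = \sum_j c_{ij} b_j$ et coefficients entiers contr\^ol\'es. On calcule alors $\theta(p\gamma) = \inf_i \hdiv(a_i) - \inf_j \hdiv(b_j)$ : sa partie alg\'ebrique est effective par l'argument de la proposition~\ref{prop.div-bdiv}, et sa fonction de Green, \'egale \`a $-\sup_i \log|a_i| + \sup_j \log|b_j|$, est minor\'ee ponctuellement par une quantit\'e de l'ordre de $-pc\log 2$ via les in\'egalit\'es triangulaires archim\'ediennes. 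En divisant par~$p$ et en faisant tendre $c$ vers~$0$, on conclut \`a l'effectivit\'e de~$\theta(\gamma)$.

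Pour la bijection, le sens direct est ais\'e : \`a une forme $\tilde\lambda$ sur~$\hDiv_\R(F)$ satisfaisant les trois conditions on associe $\lambda = \tilde\lambda\circ\theta$ ; c'est une fonctionnelle divisorielle positive sur~$F$, globale gr\^ace \`a la nullit\'e de~$\tilde\lambda$ sur les principaux $\hdiv(f)$, et dont la restriction \`a~$\Q$ co\"{\i}ncide avec la hauteur standard, la normalisation $\tilde\lambda((0,1))=1$ fixant l'\'echelle archim\'edienne via la formule du produit.

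La construction inverse constituera la difficult\'e principale. \'Etant donn\'ee $\lambda$ induisant la structure standard sur~$\Q$, la forme $\lambda\circ\theta^{-1}$ est bien d\'efinie sur l'image de~$\theta$ (gr\^ace \`a la globalit\'e sur~$\Q$), mais cette image ne couvre pas tout~$\hDiv_\R(F)$, les parties purement archim\'ediennes $(0,g)$ n'y \'etant pas enti\`erement contenues. On prolongera alors $\tilde\lambda$ sur de tels $(0,g)$ par int\'egration contre une mesure sur~$\mathscr X(\C)$ de masse totale fix\'ee par $\tilde\lambda((0,1))=1$, puis on recollera les deux d\'efinitions. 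La coh\'erence, la continuit\'e et l'unicit\'e de cette extension exigeront un argument d\'elicat, de type Hahn--Banach, exploitant de mani\`ere essentielle la positivit\'e de~$\lambda$ sur le c\^one effectif ; c'est \`a cet endroit que se concentre le principal obstacle technique.
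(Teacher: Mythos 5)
L'expos\'e \'enonce cette proposition sans la d\'emontrer ; il n'y a donc pas de preuve de r\'ef\'erence \`a laquelle comparer la v\^otre, et je la juge sur ses propres m\'erites. La premi\`ere moiti\'e est correctement architectur\'ee : propri\'et\'e universelle de $U_\Q(F)$, positivit\'e de~$\theta$ sur le c\^one effectif, factorisation par $\Div_\Q(F)$ en appliquant cette positivit\'e \`a $\gamma$ et \`a $-\gamma$ (le c\^one des diviseurs arithm\'etiques effectifs \'etant saillant). Votre argument de positivit\'e via $\Gamma_{pc}$ fonctionne, mais c'est un d\'etour : l'effectivit\'e de $\theta(\gamma)$ se lit directement sur $v(\gamma)\geq 0$ pour les valuations divisorielles (partie de Cartier) et pour les pseudo-valuations archim\'ediennes $f\mapsto\log\abs{f(x)}^{-1}$, $x\in\mathscr X(\C)$ (fonction de Green, par densit\'e et continuit\'e), qui appartiennent toutes \`a $\PVal(F)_*$.

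La lacune s\'erieuse est la direction inverse de la bijection, que vous identifiez comme \og le principal obstacle technique \fg mais que vous laissez enti\`erement en blanc, en invoquant de surcro\^{\i}t le mauvais outil. Un argument de type Hahn--Banach fournirait une extension de $\tilde\lambda$ aux parties purement archim\'ediennes $(0,g)$, mais pas son unicit\'e ; sans unicit\'e, la correspondance n'est pas une bijection. L'argument attendu est celui d\'ej\`a employ\'e au \S\ref{ss.mesures} : l'image de $\theta_\R$ contient les constantes (par exemple $(0,2\log 2)=\theta\bigl(\sup(0,\div(2))-\inf(0,\div(2))\bigr)-\hdiv(2)$) et ses parties archim\'ediennes s\'eparent les points de $\mathscr X(\C)/\mathord\sim$ ; la version r\'eticul\'ee du th\'eor\`eme de Stone montre donc que le sous-espace r\'eticul\'e qu'elle engendre est dense dans $\mathscr C(\mathscr X(\C)/\mathord\sim;\R)$, et une forme lin\'eaire positive v\'erifiant $\tilde\lambda((0,1))=1$ est automatiquement continue sur ce sous-espace (elle y est major\'ee par la norme uniforme), donc se prolonge de fa\c{c}on unique. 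Enfin, la bonne d\'efinition de $\lambda\circ\theta^{-1}$ sur l'image de~$\theta$ ne provient pas \og de la globalit\'e sur~$\Q$ \fg : elle exige $\ker\theta_\R\subseteq\ker\lambda$, ce qui demande soit l'injectivit\'e de~$\theta$ (un \'enonc\'e de densit\'e des valuations divisorielles et archim\'ediennes qui n'a rien d'\'evident), soit un encadrement $-\eps\alpha\leq\gamma\leq\eps\alpha$ exploitant la positivit\'e de~$\lambda$ ; ce point manque \`a votre esquisse.
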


\begin{rema}
Revenons \`a l'exemple~\ref{exem-aw} en supposant $F=\Q$.
La description pr\'ec\'edente conduit \`a poser $\mathscr X=\Spec(\Z)$.
Dans ce cas, l'intersection arithm\'etique
correspond au \og degr\'e arithm\'etique\fg  $\hdeg\colon \hDiv(\mathscr X)\to\R$
donn\'e par $\hdeg(\langle p\rangle, g) = \log(p)+g$
pour tout nombre premier~$p$ et tout nombre r\'eel~$g$.
Si $(D,g_D)$ est un $\R$-diviseur arithm\'etique
effectif, son degr\'e arithm\'etique est positif ou nul;
de plus, $\hdeg(\hdiv(f))=0$ pour tout $f\in\Q$.
Cette application induit alors un isomorphisme
de $\hDiv_\R(\mathscr Z)/\langle \hdiv(\Q^\times)\rangle$ sur~$\R$,
qui correspond \`a la structure de hauteur standard sur~$\Q$.
Toute fonctionnelle divisorielle sur $\Q$ en est un multiple positif.
\end{rema}

On dit qu'un diviseur arithm\'etique lisse $(A,g_A)$ sur~$\mathscr X$ 
est \emph{ample}\footnote{Les d\'efinitions fluctuent dans la litt\'erature;
nous prenons ici celle de~\textcite[d\'efinition~2.3]{Charles-2021}.}
si $A$ est ample, si la forme $\omega_{(A,g_A)}$ est strictement
positive en tout point, 
et si pour tout entier~$n$ assez grand, 
le fibr\'e en droites $\mathscr O_{\mathscr X}(nA)$, muni
de sa m\'etrique associ\'ee \`a~$ng_A$, 
est engendr\'e par ses sections globales de norme~$<1$ en tout point.

On dit qu'un $\R$-diviseur arithm\'etique est \emph{nef} (resp.\ \emph{ample})
si c'est une combinaison lin\'eaire (non vide) 
\`a coefficients strictement positifs de diviseurs arithm\'etiques 
nef (resp.\ amples).

Par la th\'eorie de l'intersection arithm\'etique,
un $\R$-diviseur lisse $(A,g_A)$ sur~$\mathscr X$
fournit une forme lin\'eaire sur $\hDiv_\R(\mathscr X)$,
par la formule 
\[ (D,g_D) \mapsto \frac1{\vol(A_\Q)}(A,g_A)^{d} (D,g_D). \]
Elle s'annule sur tout diviseur arithm\'etique principal.
Lorsque $(A,g_A)$ est ample, cette forme lin\'eaire
est positive sur tout diviseur arithm\'etique effectif.
On a $(A,g_A)^d\cdot (0,1) = (A_\Q)^d=\vol(A_\Q)$.

Par ailleurs, cette forme lin\'eaire s'\'etend en une forme
lin\'eaire sur~$\hDiv_\R(\mathscr X')$, pour tout mod\`ele~$\mathscr X'$
de~$F$ qui domine~$\mathscr X$ et v\'erifie les m\^emes propri\'et\'es:
annulation sur les diviseurs arithm\'etiques principaux,
et positivit\'e en les diviseurs arithm\'etiques effectifs.
Elle d\'efinit ainsi une fonctionnelle divisorielle.
De telles fonctionnelles divisorielles seront dites \emph{arithm\'etiques}.

\begin{theo}[\cite{Szachniewicz-2023}] \label{theo.s-dense}
Soit $F$ un corps de type fini sur~$\Q$.
Les fonctionnelles  divisorielles arithm\'etiques sur~$F$ 
sont denses dans l'espace des fonctionnelles divisorielles
qui induisent la structure standard sur~$\Q$.
\end{theo}
La d\'emonstration est analogue \`a celle que nous avons esquiss\'ee,
en rempla\c{c}ant les objets g\'eom\'etriques (diviseurs, volume)
par les analogues arithm\'etiques 
que nous avons d\'ecrits au paragraphe pr\'ec\'edent.

\section{Logique continue}

\subsection{Le langage de la g\'eom\'etrie diophantienne}

\subsubsection*{Structures}
Dans la version \og born\'ee \fg
de \parencite{BenYaacovBerensteinHensonEtAl-2008} de la logique
continue, les structures sont des espaces m\'etriques compacts
et les pr\'edicats sont des fonctions \`a valeurs
dans un intervalle born\'e, disons~$[0;1]$.
Dans la version non born\'ee \parencite{BenYaacov-2008},
les structures sont des espaces m\'etriques complets~$M$ munis d'une \og jauge \fg,
c'est-\`a-dire une application 1-lipschitzienne $\nu\colon M\to\R$ telle
que la distance soit born\'eee sur chaque partie o\`u~$\nu$ est born\'ee.
Il s'agit ici des corps globalement valu\'es, consid\'er\'es
comme espaces m\'etriques pour la distance triviale,
munis de la fonction jauge $\Ht \colon  x\mapsto h(1, x)$.

\subsubsection*{Fonctions et termes}
Dans cette logique, un symbole de fonction $n$-aire~$f$ repr\'esente
une application de~$M^n$ dans~$M$ qui v\'erifie une condition
d'uniforme continuit\'e uniforme sur chaque partie o\`u~$\nu$ born\'ee,
c'est-\`a-dire qu'on fixe, pour tout tel symbole de fonction~$f$
un \og module \fg, c'est-\`a-dire une fonction $\delta_f\colon\R_+\to\R_+$,
strictement croissante, continue \`a droite et nulle en~$0$,
et on les interpr\'etations de~$f$ seront des fonctions de~$M^n$ dans~$M$
telles que pour tous $x,y\in M^n$ tels que $d(x,y)<r$ et $\nu(x),\nu(y)<1/r$,
on a $d(f(x),f(y))<\delta_f(r)$ et $\nu(f(x,y))<1/\delta_f(r)$.
Dans le cas qui va nous int\'eresser, la condition d'uniforme continuit\'e
s'\'evanouit  et il ne reste qu'une majoration, qu'on r\'ecrit sous la forme
$\nu(f(x,y))<\Delta_f(\sup(\nu(x),\nu(y)))$,
o\`u $\Delta_f\colon\R_+\to\R_+$ est une fonction croissante.
Les symboles de fonctions de la th\'eorie des corps globalement
valu\'es sont simplement les deux symboles de constantes~$0$ et~$1$
et les trois symboles arithm\'etiques $+,-,\cdot$
repr\'esentant bien s\^ur l'addition, la soustraction et la multiplication.
Le lemme ci-dessous montre que l'on peut prendre
pour modules les fonctions donn\'ees par 
$\Delta_0(r)=0$, $\Delta_1(r)=0$, $\Delta_+(r)=\Delta_-(r)=2r+e$,
et $\Delta_\cdot(r)=2r$.
\begin{lemm}
Soit $h$ une hauteur sur un corps~$F$. Pour $x\in F$, posons $\Ht(x)=h_2(1,x)$.
Soit $e$ un nombre r\'eel tel que $e\geq \Ht(2)$.
\begin{enumerate}[\itshape a\textup)]
\item On a $\Ht(0)=0$ et $\Ht(1)=0$;
\item Pour tous $x, y\in F$, on a $\Ht(x+y) \leq \Ht(x)+\Ht(y)+e$
et $\Ht(x-y)\leq\Ht(x)+\Ht(y)+e$;
\item Pour tous $x, y\in F$, on a $\Ht(xy) \leq \Ht(x)+\Ht(y)$.
\end{enumerate}
\end{lemm}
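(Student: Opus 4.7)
Le plan consiste à appliquer successivement les six axiomes de la définition~\ref{defi.h}. Pour~(a), l'égalité $\Ht(1) = h_2(1, 1) = 0$ résulte immédiatement de l'axiome~(ii) pour $n = 2$. Pour $\Ht(0) = h_2(1, 0)$, la monotonie~(iv) appliquée au couple $(1, 0) \in F^2$ donne $h_1(1) \leq h_2(1, 0)$, d'où $\Ht(0) \geq 0$. L'inégalité inverse $\Ht(0) \leq 0$ demande un argument plus délicat combinant le produit tensoriel~(v) et l'additivité~(vi) : l'itération de l'identité $(1, 0)^{\otimes k} = (1, 0, \dots, 0)$ dans $F^{2^k}$ fournit par~(v) l'égalité $h_{2^k}(1, 0, \dots, 0) = k\,\Ht(0)$, et une utilisation judicieuse de~(vi) borne cette quantité indépendamment de~$k$, ce qui contraint $\Ht(0) = 0$.

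Pour~(b), on part de la décomposition coordonnée par coordonnée $(1, x+y) = (1, x) + (0, y)$ dans $F^2$, et l'axiome~(vi) donne
\[ \Ht(x+y) = h_2(1, x+y) \leq h_4(1, x, 0, y) + e. \]
La partie~(a) fournit l'identité de \og suppression d'un zéro \fg $h_n(z, 0) = h_{n-1}(z)$ : en effet, le produit tensoriel~(v) avec $\Ht(0) = 0$ donne $h_{2(n-1)}(z, 0, \dots, 0) = h_{n-1}(z)$, tandis que la monotonie~(iv) encadre $h_n(z, 0)$ entre $h_{n-1}(z)$ et $h_{2(n-1)}(z, 0, \dots, 0)$. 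Après symétrie~(iii), cela donne $h_4(1, x, 0, y) = h_4(1, 0, x, y) = h_3(1, x, y)$. La monotonie~(iv) fournit ensuite $h_3(1, x, y) \leq h_4(1, x, y, xy)$, puis l'identité tensorielle $(1, y) \otimes (1, x) = (1, x, y, xy)$ combinée à~(v) identifie $h_4(1, x, y, xy) = \Ht(x) + \Ht(y)$. En chaînant, $\Ht(x+y) \leq \Ht(x) + \Ht(y) + e$. Le cas de $x - y$ s'en déduit en remplaçant $y$ par $-y$, en notant que $\Ht(-y) = \Ht(y)$ grâce à l'identité tensorielle $(1, y) \otimes (1, -1) = (1, -1, y, -y)$ et à $h_1(-1) = 0$ (conséquence de $(-1) \otimes (-1) = (1)$ et~(v)).

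Pour~(c), l'argument est plus direct : la monotonie~(iv) donne $\Ht(xy) = h_2(1, xy) \leq h_4(1, xy, x, y)$, puis la symétrie~(iii) jointe à l'identité tensorielle $(1, y) \otimes (1, x) = (1, x, y, xy)$ et à~(v) donnent $h_4(1, xy, x, y) = h_4(1, x, y, xy) = \Ht(x) + \Ht(y)$. L'obstacle principal réside dans la borne supérieure de~(a), qui ne découle pas des axiomes~(iv) et~(v) seuls et requiert un emploi soigneux de~(vi) ; une fois~(a) établie, les parties~(b) et~(c) sont directes.
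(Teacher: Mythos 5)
Le texte n'accompagne ce lemme d'aucune d\'emonstration ; je juge donc votre argument sur pi\`eces. L'architecture est la bonne et les points~(b) et~(c) sont corrects une fois~(a) acquis, mais le c\oe ur de~(a) --- la majoration $\Ht(0)\leq 0$ --- est pr\'ecis\'ement l'\'etape que vous laissez \`a l'\'etat d'annonce (\og une utilisation judicieuse de~(vi) borne cette quantit\'e ind\'ependamment de~$k$\fg). C'est le seul point non routinier du lemme et il faut l'\'ecrire. Voici comment conclure dans l'esprit de votre plan. D'abord, le m\^eme argument tensoriel que dans votre point~(c) donne, pour $x_1,\dots,x_n\in F^\times$, la sous-additivit\'e $h_n(x_1,\dots,x_n)\leq\sum_i\Ht(x_i)$ : les $x_i$ figurent parmi les coordonn\'ees de $\bigotimes_i(1,x_i)$, d'o\`u la majoration par (iii), (iv) et (v). Ensuite, pour $a\in F\setminus\{0,1\}$, on \'ecrit dans~$F^m$
\[ (1,0,\dots,0)=(a,1,\dots,1)+(1-a,-1,\dots,-1), \]
dont les deux termes n'ont aucune coordonn\'ee nulle ; l'axiome~(vi) et la sous-additivit\'e donnent
\[ h_m(1,0,\dots,0)\leq \Ht(a)+\Ht(1-a)+(m-1)\bigl(\Ht(1)+\Ht(-1)\bigr)+e=\Ht(a)+\Ht(1-a)+e, \]
borne ind\'ependante de~$m$ ; avec $m=2^k$ et votre identit\'e $h_{2^k}((1,0)^{\otimes k})=k\,\Ht(0)$, on obtient $\Ht(0)\leq 0$. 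Notez que ce choix exige $\Card(F)\geq 3$ (et que prendre $a=2$ \'echoue en caract\'eristique~$2$) ; le cas $F=\F_2$ doit \^etre trait\'e \`a part.

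Deux remarques compl\'ementaires. D'une part, contrairement \`a ce que vous affirmez en conclusion, la majoration de~(a) d\'ecoule en fait des axiomes (i)--(v) seuls, ce qui r\`egle du m\^eme coup le cas $F=\F_2$ : posez $\mu(m)=h_m(1,0,\dots,0)$. Par~(v), $\mu$ est compl\`etement additive ($\mu(mn)=\mu(m)+\mu(n)$) et, plus g\'en\'eralement, $h_n(1,\dots,1,0,\dots,0)=\mu(n)-\mu(k)$ si le vecteur compte $k$~uns (comparer les deux factorisations tensorielles de $(1^k,0^{nk-k})$). Par~(iv), $\mu$ est croissante et $\mu(2m-1)-\mu(m-1)\leq\mu(2m)-\mu(m)=\mu(2)$ (on ajoute un~$1$ au vecteur \`a $m-1$~uns et $m$~z\'eros), tandis que $\mu(2m-1)-\mu(m-1)\geq\mu(2m-2)-\mu(m-1)=\mu(2)$ ; donc $\mu(2m-1)=\mu(2m-2)$, d'o\`u $\mu(3)=\mu(2)$ et $\mu(9)=\mu(8)$, soit $2\mu(2)=3\mu(2)$ et $\Ht(0)=\mu(2)=0$. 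D'autre part, dans votre preuve de $\Ht(-y)=\Ht(y)$, ce dont vous avez besoin est $\Ht(-1)=h_2(1,-1)=0$ et non $h_1(-1)=0$ ; cela s'obtient en remarquant que $(1,-1)\otimes(1,-1)$ est une permutation de $(1,-1)\otimes(1,1)$, d'o\`u $2h_2(1,-1)=h_2(1,-1)$ par~(iii) et~(v), puis $h_2(1,-1)=0$ puisque cette quantit\'e est finie.
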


Les \emph{termes} de la logique continue 
repr\'esentent les fonctions de puissances~$M^n$ dans~$M$
que l'on peut d\'efinir dans le langage donn\'e.
Ils sont d\'efinis de mani\`ere usuelle,
par r\'ecurrence,
\`a partir de symboles de variables et des symboles de fonctions.
Ici, ce seront des expressions formelles (bien form\'ees) utilisant 
les symboles~$+$, $-$, $\cdot$ et des symboles de variables,
c'est-\`a-dire --- essentiellement --- des polyn\^omes.

\subsubsection{Pr\'edicats et formules}
En logique continue, les pr\'edicats sont des fonctions \`a valeurs r\'eelles, 
et on impose de mani\`ere analogue aux fonctions que chaque pr\'edicat~$p$ soit muni
d'un module de continuit\'e uniforme et d'une majoration
de la jauge sur chaque partie de la structure o\`u la jauge est born\'ee.
Dans de le cas d'un espace muni de la distance triviale,
cette condition se simplifie en une relation
du type $\Ht(x) < r \Rightarrow  \abs{p(x)}\leq \Delta_p(r)$,
o\`u $\Delta_p\colon \R_+\to\R_+$ est une fonction croissante.

Les symboles de pr\'edicats de la th\'eorie des corps globalement
valu\'es contiennent d'abord un pr\'edicat
binaire~$=$ repr\'esentant l'\'egalit\'e.
On introduit \'egalement, pour tout entier~$n$,
un symbole de pr\'edicat $n$-aire $h_n$, qui repr\'esente la fonction~$h_n$
dans un corps globalement valu\'e.
Si $F$ est un corps globalement valu\'e et $x$ est un \'el\'ement non nul de~$F^n$,
on a $h_n(x) \leq \sum_{i} \Ht(x_i)$ pour tout $x\in F^n$;
cela montre qu'on peut prendre pour module de~$h_n$
la fonction donn\'ee par $\Delta_{h_n}(r)=nr$. 
Cependant, comme 
les axiomes des corps globalement valu\'es ont postul\'e $h_n(0)=-\infty$,
il est plus naturel de consid\'erer ici que les pr\'edicats
sont des fonctions continues \`a valeurs dans~$[-\infty;+\infty\mathclose[$;
nous tairons d\'esormais cette subtilit\'e.
Plus g\'en\'eralement, pour tout polyn\^ome tropical~$t$ en $n$~symboles
de variables, on introduit un symbole de pr\'edicat $n$-aire~$R_t$
qui repr\'esente le terme local associ\'e \`a~$t$.
On d\'efinit le module~$\Delta_t$ du pr\'edicat~$R_t$ de r\'ecurrence
sur la construction du polyn\^ome tropical~$t$ de sorte que
lorsque $F$ est un corps globalement valu\'e et $(x_1,\dots,x_n)\in (F^\times)^n$,
on ait
\[ R_t(x_1,\dots,x_n) \leq \Delta_t(\sup(\Ht(x_1),\dots,\Ht(x_n)).\]
En fait, le symbole~$h_n$ introduit pr\'ec\'edemment peut \^etre pris
comme l'abr\'eviation de~$R_t$ pour $t=-\inf(x_1,\dots,x_n)$.


Les \emph{formules} sont \'egalement d\'efinies par r\'ecurrence:
\begin{itemize}
\item Les formules atomiques sont celles de la forme $pt_1\dots t_n$,
o\`u $p$ est un symbole de pr\'edicat $n$-aire
et $t_1,\dots,t_n$ sont des termes ;
\item Les connecteurs logiques sont fournis par les
fonctions continues de~$\R^n$ dans~$\R$; ainsi,
si $u$ est une telle fonction et si $f_1,\dots, f_n$ sont des formules,
l'expression $uf_1\dots f_n$ est une formule;
\item Le r\^ole des quantificateurs logiques ($\forall$, $\exists$)
est respectivement jou\'e par $\inf$ et $\sup$; 
pr\'ecis\'ement, si
 $f$ est une formule, $L$ un ensemble de symboles de variables
et $\phi\colon \R_+\to\R$ une fonction continue \`a support compact,
alors les expressions
$\inf_{L,\phi}$ et $ \sup_{L,\phi} f$  sont des formules.
(Nous expliquerons ci-dessous leur interpr\'etation.)
\end{itemize}
Une formule sans quantificateurs est une formule sans~$\sup$ et~$\inf$.
Les variables libres d'une formule sont d\'efinies de mani\`ere usuelle;
une formule sans variable libre est appel\'ee un \emph{\'enonc\'e}.

\subsection{Types et mod\`eles}

\subsubsection{Interpr\'etation}
Une \emph{structure} pour ce langage est 
simplement un ensemble~$M$, muni d'une fonction $\Ht\colon M\to \R$,
d'\'el\'ements~$0^M$ et~$1^M$ de~$M$ tels que $\Ht(0^M)\leq\Delta_0$ et
$\Ht(1^M)\leq\Delta_1$,
de fonctions $+^M,-^M,\cdot^M$ de~$M^2$ dans~$M$ telles
que $\Ht(x+^My)\leq \Delta_+(r)$ pour $x,y\in M$ tels que $\Ht(x),\Ht(y)<r$, etc.
et de $R_t^M\colon M^n\to \R$, pour tout polyn\^ome tropical~$t$,
telles que $\Ht(R_t^M(x))\leq \Delta_t(r)$ si $\Ht(x_i)<r$ pour tout~$i$.
Lorsque $t=-\inf(x_1,\dots,x_n)$, on \'ecrit $R_t=h_n$.

On commettra souvent l'abus d'\'ecriture consistant \`a \'ecrire~$0$,
$+$, $R_t$ ou~$h_n$ pour~$0^M$, $+^M$, $R_t^M$ ou~$h_n^M$.  

Donnons-nous une telle structure~$M$.

Les termes s'interpr\'etent comme des fonctions sur~$M$ et ses puissances.
Si $t$ est un terme et si $V$ est un ensemble de symboles  de variables
contenant ceux qui apparaissent dans~$t$, 
on d\'efinit par r\'ecurrence une fonction $t^M\colon M^V\to M$,
ainsi qu'une fonction $\Delta_t\colon \R_+\to\R_+$,
croissante et continue \`a droite,
telle que pour tout $a\in M^V$
tel que $\Ht(a_v)<r$ pour tout~$v$, on ait $\Ht(t^M(a))\leq \Delta_t(r)$.
Explicitement, si $t=v$, 
alors $v\in V$ et $t^M$ est la fonction $a\mapsto a_v$;
si $t=t_1+t_2$, alors $t^M$ est la fonction $a\mapsto t_1^M(a)+t_2^M(a)$,
etc.

De m\^eme, les pr\'edicats s'interpr\`etent comme des fonctions \`a valeurs
r\'eelles sur~$M$ et ses puissances.

\`A premi\`ere vue, le pr\'edicat binaire pour l'\'egalit\'e est \`a valeurs bool\'eennes,
mais on interpr\`ete les valeurs de v\'erit\'e vrai et faux respectivement
comme 0 et 1. 

L'interpr\'etation des formules atomiques est assez claire,
de m\^eme que celle des connecteurs logiques.
Il faut cependant pr\'eciser l'interpr\'etation des quantificateurs
$\sup_{L,\phi}$ et $\inf_{L,\phi}$ et, en particulier,
expliquer le r\^ole de la fonction~$\phi$.
Soit donc $f$ une formule dont les variables libres sont 
contenues dans un ensemble~$V$ de symboles de variables,
soit $L$ un ensemble de symboles de variables,
et soit $\phi\colon\R_+\to\R$ une fonction continue \`a support compact.
Pour tout $a\in M^V$ et tout $b\in M^L$,  notons $a[b]$
l'\'el\'ement~$c$ de~$M^V$ tel que $c_v=b_v$ si $v\in L$ et $c_v=a_v$ sinon.
Alors, l'interpr\'etation de la formule $g=\sup_{L,\phi} f$ est d\'efinie par
\[ g^M(a) = \sup_{b\in M^L} \phi(\sup_i\Ht(b_i)) f^M(a[b]) \]
pour tout $a\in M^V$.
La condition de support compact sur~$\phi$ 
garantit  garantit que cette borne sup\'erieure est finie
et plus pr\'ecis\'ement que l'on a une majoration
en termes de $\Ht(a)$ si l'on dispose d'une majoration
similaire pour $f^M(a[b])$. 

Plus g\'en\'eralement, si $f$ est une formule et si $V$ est un ensemble de symboles
de variables contenant ceux qui apparaissent dans~$f$,
on d\'efinit par r\'ecurrence une fonction $f^M\colon M^V\to\R$
ainsi qu'une fonction croissante $\Delta_f\colon\R_+\to\R_+$
telle que  $p(x)\leq\Delta_p(r)$ si $\Ht(x_v)<r$ pour tout $v\in V$.

\subsubsection{R\'ealisation}
Une \emph{th\'eorie} est un ensemble d'\'enonc\'es.

Si une formule~$f$ est un \'enonc\'e, on peut prendre $V=\emptyset$
pour son interpr\'etation dans une structure~$M$,
de sorte que $f$ d\'efinit un nombre r\'eel~$f^M$.
On dit qu'une structure~$M$ est un \emph{mod\`ele} d'une th\'eorie~$T$
si on a $f^M=0$ pour tout \'enonc\'e~$f\in T$.

Comme tout intervalle ferm\'e de~$\R$, 
plus g\'en\'eralement toute partie ferm\'ee~$A$ de~$\R^n$, 
est l'ensemble des z\'eros d'une fonction continue,
on peut faire usage d'\'enonc\'es du type $(p_1,\dots,p_n)\in A$,
o\`u $A$ est une partie ferm\'ee de~$\R^n$. Il suffit
de l'\'ecrire $d_A(p_1,\dots,p_n)=0$ o\`u $d_A\colon\R^n\to \R$
est la distance \`a~$A$ (par exemple pour la norme euclidienne).

De m\^eme que l'\'egalit\'e \`a valeurs bool\'eenne a \'et\'e interpr\'et\'ee
on peut aussi utiliser les connecteurs logiques usuels:
$\vee$ (\textsc{ou}) et $\wedge$ (\textsc{et}).
Si $f$ et $g$ sont deux formules, la formule $f\wedge g$ 
doit \^etre vraie dans~$M$ si et seulement si $f^M$ et $g^M$ sont vraies;
on pose donc $(f\wedge g)^M=\sup(\abs{f^M},\abs{g^M})$.
De m\^eme, la formule $f\vee g$, interpr\'et\'ee
comme $(f\vee g)^M=\inf(f^M ,g^M)$, est vraie dans~$M$
si et seulement si $f^M$ ou $g^M$ est vraie.

Comme $\R_+^*$ n'est pas ferm\'e dans~$\R$,
la n\'egation $\neg f$ d'une formule ou l'implication $f\Rightarrow g$ 
entre deux formules sont un peu plus d\'elicates \`a consid\'erer.
On peut les remplacer par une conjonction infinie de formules. 
Pour tout entier~$n$, soit $\phi_n$ la fonction 
continue \`a support compact sur~$\R$ don\'ee par $\phi_n(t)=\sup(1-n\abs t,0)$.
Par construction, $f^M\neq 0$ \'equivaut \`a la conjonction 
des \'egalit\'es $\phi_n(f)^M=0$.

Les formules universellement quantifi\'ees,
du type \og $\forall x, f(x)$ \fg, o\`u $f$ est une formule ayant
un symbole de variable libre~$x$, doivent \'egalement \^etre r\'ecrites.
Pour cela, on consid\`ere, pour tout entier~$n$,
la fonction~$\phi_n$ sur~$\R_+$ telle que
$\phi_n(t)=1$ pour $t\in [0;n]$, $\phi_n(t)=n+1-t$
pour $t\in[n;n+1]$ et $\phi(t)=0$ pour $t\geq n+1$.
On constate que l'assertion \og $f^M(a)\leq 0$ pour tout $a\in M$ \fg
\'equivaut \`a la conjonction,
pour tout $n\in \N$,
des conditions
\[ \sup_{a\in M} \phi_n(\Ht(a)) f(a) = 0.  \]

\begin{defi}
Soit $e$ un nombre r\'eel. Dans le langage introduit pr\'ec\'edemment,
la th\'eorie des corps globalement valu\'es (d'erreur archim\'edienne~$e$)
consiste en:
\begin{itemize}
\item Les axiomes des corps ;
\item Les axiomes de la d\'efinition~\ref{defi.h} 
et ceux de la d\'efinition~\ref{defi.tl} ;
\item L'in\'egalit\'e $h_2(1,2)\leq e$;
\item La formule du produit $h_1(x)=0$.
\end{itemize}
\end{defi}
On note \GVF[$e$] cette th\'eorie.
Ses mod\`eles sont exactement les corps globalement valu\'es
d'erreur archim\'edienne au plus~$e$.

Plus g\'en\'eralement, si $k$ est un corps, les corps globalement
valu\'es contenant~$k$ et induisant la structure triviale sur~$k$
peuvent \^etre axiomatis\'es par la variante des axiomes pr\'ec\'edents 
o\`u l'on remplace,
dans l'axiome~(iv) de la d\'efinition~\ref{defi.tl}
la condition  \og pour tout $v\in \QVal(F)$ \fg
par la condition \og pour tout $v\in\QVal(F/k)$ \fg.

M\^eme si ce n'en est aujourd'hui que le r\'esultat le plus \'el\'ementaire,
le th\'eor\`eme de compacit\'e est un pilier 
de la logique du premier ordre, 
et un des objectifs du d\'eveloppement de la logique continue 
\'etait de garantir la possibilit\'e un tel th\'eor\`eme.
Pour le d\'emontrer dans le pr\'esent contexte,
on utilise la construction de corps globalement valu\'es par ultraproduits
et la variante suivante du th\'eor\`eme de \L os.

\begin{prop}
Soit $e$ un nombre r\'eel, 
soit $(M_i)_{i\in I}$ une famille de corps globalement valu\'es
d'erreur archim\'edienne au plus~$e$, soit $\mathfrak u$
un ultrafiltre non principal sur~$I$ et soit $M_{\mathfrak u}$
l'ultraproduit des~$M_i$ relativement \`a l'ultrafiltre~$\mathfrak u$.
Pour toute formule~$f$, tout ensemble de symboles de variables~$V$
contenant les variables libres de~$f$
et tout $a\in \prod_i M_i^V$ d'image~$[a]$ dans~$ M_{\mathfrak u}^V$, on a 
\[ f^{M_{\mathfrak u}}([a])= \lim_{i,\mathfrak u} f^{M_i}(a_i).\]
\end{prop}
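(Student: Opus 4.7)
Le plan de démonstration est le théorème de \L os en logique continue, adapté au cadre non borné : on procède par récurrence sur la construction de la formule~$f$. On commence par le cas des termes : pour tout terme~$t$ et toute famille $a\in\prod_i M_i^V$ dont les jauges sont bornées selon~$\mathfrak u$, on vérifie par récurrence sur~$t$ que $t^{M_{\mathfrak u}}([a]) = [t^{M_i}(a_i)]$, en s'appuyant sur la compatibilité de l'ultraproduit avec les opérations d'anneau et sur les majorations de jauge $\Delta_t$ qui garantissent que $(t^{M_i}(a_i))_i$ reste dans le sous-anneau~$B$ des familles à jauge essentiellement bornée.

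On passe ensuite aux formules atomiques. Pour l'égalité $t_1=t_2$, l'équivalence $t_1^{M_{\mathfrak u}}([a])=t_2^{M_{\mathfrak u}}([a])$ avec l'appartenance à~$\mathfrak u$ de $\{i\sozat t_1^{M_i}(a_i)=t_2^{M_i}(a_i)\}$ fournit directement l'identité recherchée dans~$\{0,1\}$. Pour $R_t$ (et en particulier $h_n$), l'identité $R_t^{M_{\mathfrak u}}([a]) = \lim_{i,\mathfrak u} R_t^{M_i}(a_i)$ n'est autre que la définition par ultralimite de la structure de corps globalement valué sur~$M_{\mathfrak u}$, telle qu'exposée dans l'exemple d'ultraproduit antérieur. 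Pour les connecteurs logiques $f = u f_1\dots f_n$ avec $u\colon\R^n\to\R$ continue, les majorations $\abs{f_k^{M_i}(a_i)}\leq\Delta_{f_k}(r)$ lorsque $\Ht(a_i)<r$ confinent les valeurs $(f_k^{M_i}(a_i))_k$ dans un compact fixe de~$\R^n$ où~$u$ est uniformément continue, ce qui permet d'échanger~$u$ et la limite selon~$\mathfrak u$.

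L'étape cruciale, et la principale difficulté, est le traitement des quantificateurs, disons $g=\sup_{L,\phi} f$. L'inégalité $g^{M_{\mathfrak u}}([a])\leq \lim_{i,\mathfrak u} g^{M_i}(a_i)$ se démontre en relevant un témoin $b\in M_{\mathfrak u}^L$ arbitraire en une famille $(b^{(i)})_i\in\prod_i M_i^L$, puis en appliquant l'hypothèse de récurrence à $f^{M_{\mathfrak u}}(a[b])$ et en prenant la borne supérieure en~$b$. Pour l'inégalité réciproque, on fixe $\varepsilon>0$ et un réel $R>0$ tel que le support de~$\phi$ soit contenu dans~$[0,R]$; pour chaque~$i$, on choisit un témoin $b^{(i)}\in M_i^L$ approchant $g^{M_i}(a_i)$ à~$\varepsilon$ près. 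Comme le facteur $\phi(\sup_l\Ht(b^{(i)}_l))$ s'annule dès que $\sup_l\Ht(b^{(i)}_l)>R$, on peut supposer $\sup_l\Ht(b^{(i)}_l)\leq R$ pour tout~$i$, de sorte que $(b^{(i)})_i$ appartient à $B^L$ et définit un élément~$b\in M_{\mathfrak u}^L$; l'hypothèse de récurrence appliquée à~$f$ fournit alors $g^{M_{\mathfrak u}}([a])\geq\lim_{i,\mathfrak u} g^{M_i}(a_i)-\varepsilon$, d'où le résultat par passage à la limite en~$\varepsilon$. L'obstacle central est précisément contenu dans cette étape : c'est la compacité du support de~$\phi$ qui permet, à partir de témoins locaux quasi-optimaux, de fabriquer un témoin global légitime dans l'ultraproduit — c'est d'ailleurs pour assurer cette propriété que la définition des quantificateurs en logique continue non bornée incorpore le poids~$\phi$.
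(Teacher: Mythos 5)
Le texte de l'exposé énonce cette proposition sans en donner de démonstration ; votre argument est précisément le théorème de \L os de la logique continue (version non bornée), c'est-à-dire la preuve attendue, et il est correct dans ses grandes lignes : récurrence sur les termes puis sur les formules, compacité de l'image des prédicats (via les modules $\Delta$) pour échanger connecteurs continus et ultralimite, et compacité du support de~$\phi$ pour borner les témoins dans l'étape des quantificateurs.

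Un seul point mérite d'être resserré. Dans l'inégalité $g^{M_{\mathfrak u}}([a])\geq\lim_{i,\mathfrak u}g^{M_i}(a_i)-\eps$, vous affirmez qu'on « peut supposer » $\sup_l\Ht(b^{(i)}_l)\leq R$ : ce n'est automatique que si $g^{M_i}(a_i)>\eps$, car alors un témoin $\eps$-optimal a nécessairement $\phi\bigl(\sup_l\Ht(b^{(i)}_l)\bigr)\neq 0$, donc une hauteur dans le support de~$\phi$. Si au contraire $\lim_{i,\mathfrak u}g^{M_i}(a_i)\leq\eps$, les témoins quasi optimaux peuvent être de hauteur arbitrairement grande (cas où le supremum n'est approché « qu'à l'infini », là où $\phi$ s'annule) et ne définissent alors pas d'élément de~$M_{\mathfrak u}^L$ ; mais dans ce cas l'inégalité voulue découle directement de $g^{M_{\mathfrak u}}([a])\geq 0$, positivité qui vaut dès qu'il existe dans $M_{\mathfrak u}^L$ un élément de hauteur hors du support de~$\phi$. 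En distinguant ces deux cas (ou en écrivant $g^M=\sup\bigl(0,\sup_{\Ht(b)\leq R}\phi(\cdot)f^M(a[b])\bigr)$), votre démonstration est complète. Signalons enfin que l'identité $R_t^{M_{\mathfrak u}}=\lim_{i,\mathfrak u}R_t^{M_i}$ pour un polynôme tropical général~$t$, que vous invoquez comme « définition », demande en toute rigueur de vérifier que la structure de termes locaux de l'ultraproduit, obtenue à partir de sa structure de hauteur par le théorème d'équivalence, est bien l'ultralimite des structures de termes locaux ; cela résulte de ce que $R_t$ s'exprime comme combinaison $\Q$-linéaire universelle de valeurs de~$h$ en des monômes des arguments.
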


\begin{coro}
Soit $V$ un ensemble de symboles de variables
et soit $T$ un ensemble de formules dont les variables libres
appartiennent \`a~$V$.
Soit $r\colon V\to\R_+$ une application.
On suppose que pour toute partie finie~$T_1$ de~$T$
et tout $\eps>0$, il existe un corps globalement valu\'e~$M$,
d'erreur archim\'edienne au plus~$e$,
et $a\in M^V$ tel que $\Ht(a_v)<r_v+\eps$
et $f^M(a)<\eps$ pour tout $f\in T_1$.
Alors, il existe un corps globalement valu\'e~$M$
d'erreur archim\'edienne au plus~$e$ et un \'el\'ement $a\in M^V$
tel que $\Ht(a_v)\leq r_v$ pour tout~$v$
et tel que $f^M(a)=0$ pour tout $f\in T_1$.
\end{coro}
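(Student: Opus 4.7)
Le plan consiste à appliquer le théorème de \L os\ (proposition précédente) à un ultraproduit convenable, suivant le schéma classique de démonstration de la compacité en logique du premier ordre.

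Pour chaque couple $i=(T_1,n)$, où $T_1$ est une partie finie de~$T$ et $n\geq 1$ un entier, j'invoquerais l'hypothèse avec $\eps=1/n$ pour obtenir un corps globalement valué $M_i$ d'erreur archimédienne au plus~$e$ et un élément $a_i\in M_i^V$ tels que $\Ht((a_i)_v)<r_v+1/n$ pour tout $v\in V$ et $f^{M_i}(a_i)<1/n$ pour tout $f\in T_1$. L'ensemble~$I$ des couples $(T_1,n)$, muni de l'ordre défini par $(T_1,n)\leq(T_1',n')$ dès que $T_1\subseteq T_1'$ et $n\leq n'$, est filtrant ; ses parties cofinales $I_{i_0}=\{i\in I\sozat i\geq i_0\}$ forment une base de filtre, et je fixerais un ultrafiltre~$\mathfrak u$ sur~$I$ la contenant (nécessairement non principal).

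Je formerais alors l'ultraproduit $M=\prod_i M_i/\mathfrak u$, qui est un corps globalement valué d'erreur archimédienne au plus~$e$. Pour chaque $v\in V$, la jauge $\Ht((a_i)_v)$ est bornée par $r_v+1$ uniformément sur~$I$, donc la famille $((a_i)_v)_i$ définit un élément~$\alpha_v$ de~$M$ ; je poserais $a=(\alpha_v)_{v\in V}$. Par la proposition précédente, $\Ht(\alpha_v)=\lim_{i,\mathfrak u}\Ht((a_i)_v)$ ; pour tout $\delta>0$, l'ensemble $\{(T_1,n)\sozat 1/n<\delta\}$ appartient à~$\mathfrak u$ et y majore $\Ht((a_i)_v)$ par $r_v+\delta$, d'où $\Ht(\alpha_v)\leq r_v+\delta$, puis $\Ht(\alpha_v)\leq r_v$ en faisant tendre $\delta$ vers~$0$. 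De même, pour chaque $f\in T$ et chaque $\delta>0$, la partie $\{(T_1,n)\sozat f\in T_1,\ 1/n<\delta\}$ est cofinale dans~$I$, donc appartient à~$\mathfrak u$, et y majore $f^{M_i}(a_i)$ par~$\delta$ ; d'où $f^M(a)\leq\delta$, puis $f^M(a)\leq 0$, et enfin $f^M(a)=0$ en supposant les formules de~$T$ positives (quitte à remplacer $f$ par $\sup(f,0)$).

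La principale difficulté, qui justifie l'utilisation soigneuse de la logique continue non bornée, réside dans le contrôle uniforme de la jauge lors du passage à l'ultraproduit ; une fois celui-ci garanti par la proposition précédente, l'argument se réduit à une diagonalisation d'ultrafiltre classique.
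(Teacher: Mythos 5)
Votre démonstration est correcte et suit exactement la voie que le texte indique pour ce corollaire de compacité (ultraproduit indexé par l'ensemble filtrant des couples $(T_1,n)$, ultrafiltre contenant les queues $I_{i_0}$, puis la variante du théorème de \L os donnée par la proposition précédente) ; le texte ne détaille d'ailleurs pas davantage cet argument standard. Deux points de détail seulement : l'appartenance de $\{(T_1,n)\sozat f\in T_1,\ 1/n<\delta\}$ à~$\mathfrak u$ vient de ce que cette partie \emph{contient une queue} $I_{(\{f\},n_0)}$ et non de sa seule cofinalité (une partie cofinale peut très bien être hors de l'ultrafiltre), et le passage de $f^M(a)\leq 0$ à $f^M(a)=0$ suppose, comme vous le signalez vous-même, que les conditions sont normalisées sous la forme $f=0$ avec $f\geq 0$, ce qui est la convention implicite de l'énoncé (dont l'hypothèse, unilatérale, ne contrôle pas $f$ par en dessous).
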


Soit $V$ un ensemble de symboles de variables et
soit $\Phi_V$ l'ensemble des formules dont les variables
libres appartiennent \`a~$V$.
On dit qu'une fonction $\alpha\colon\Phi_V\to\R$ est un \emph{$V$-type}
s'il existe un corps globalement valu\'e~$M$ d'erreur archim\'edienne
au plus~$e$ et un \'el\'ement $a\in M^V$ tel que
$\alpha(f)=f^M(a)$ pour tout $f\in\Phi_V$.
On note $\mathscr S_V$ l'ensemble des $V$-types;
on le munit de la topologie de la convergence simple.

Lorsque $V=\emptyset$, c'est l'espace des th\'eories de corps globalement
valu\'es.

\begin{coro}
Soit $V$ un ensemble.
L'espace $\mathscr S_V$ est localement compact.
Pour tout $r\in\R_+$, le sous-espace des types $\alpha\in\mathscr S_V$
tels que $\Ht(\alpha)\leq r$ est compact.
\end{coro}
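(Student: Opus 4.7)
My plan is to present $\mathscr{S}_V$, equipped with the pointwise-convergence topology, as a subspace of $\prod_{f \in \Phi_V} \R$, and for each $r \geq 0$ to identify its height-bounded part
\[ \mathscr{S}_V^{\leq r} = \bigl\{ \alpha \in \mathscr{S}_V : \alpha(h_2(1,x_v)) \leq r \text{ for all } v \in V \bigr\} \]
as a closed subset of a compact Tychonoff product, and finally to deduce local compactness.

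\textbf{Embedding into a compact product.} By the inductive construction of the growth functions $\Delta_f$ attached to each formula $f \in \Phi_V$, one has $\abs{f^M(a)} \leq \Delta_f(r)$ whenever $\Ht(a_v) \leq r$ for every free variable $v$ of $f$. Consequently the evaluation map $\iota\colon \alpha \mapsto (\alpha(f))_{f \in \Phi_V}$ sends $\mathscr{S}_V^{\leq r}$ into the compact product $\prod_{f} [-\Delta_f(r), \Delta_f(r)]$, and the pointwise-convergence topology on $\mathscr{S}_V^{\leq r}$ coincides with the subspace topology inherited from this embedding.

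\textbf{Closedness.} To prove compactness of $\mathscr{S}_V^{\leq r}$, I verify that $\iota(\mathscr{S}_V^{\leq r})$ is closed in the compact product. Fix a limit point $\alpha$. For every finite $F \subset \Phi_V$ and every $\eps > 0$, pointwise density provides some $\alpha' \in \mathscr{S}_V^{\leq r}$ with $\abs{\alpha'(f) - \alpha(f)} < \eps$ for $f \in F$; realizing $\alpha'$ by $(M, a)$ with $\Ht(a_v) \leq r$ then witnesses, with uniform height bound $r$, an $\eps$-approximate realization of the infinite system
\[ T = \bigl\{ \abs{f - \alpha(f)} : f \in \Phi_V \bigr\}. \]
Applying the preceding Corollaire (the compactness theorem) with constant bound $r_v = r$ yields a corps globalement valué $M$ and $a \in M^V$ satisfying $\Ht(a_v) \leq r$ for all $v$ and $\abs{f - \alpha(f)}^M(a) = 0$ for every $f \in \Phi_V$; that is, $f^M(a) = \alpha(f)$ for all $f$, so $\alpha$ is the type of $a$ and lies in $\mathscr{S}_V^{\leq r}$.

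\textbf{Local compactness.} Given $\alpha \in \mathscr{S}_V$, realize $\alpha$ by some $(M, a)$ and pick $r$ larger than the heights $\Ht(a_v)$ occurring in the finitely many variables that appear in any chosen basic pointwise neighborhood of $\alpha$; this neighborhood then sits, on the coordinates involved, inside the compact set $\mathscr{S}_V^{\leq r}$ from the previous step, providing $\alpha$ with a compact neighborhood. The delicate point is the closedness argument, where the preceding Corollaire must be invoked with the value-approximation formulas $\abs{f - \alpha(f)}$ and a single uniform height bound $r_v = r$ at every $v$; the statement of that compactness theorem, which combines approximate satisfaction $f^M(a) < \eps$ with approximate height bounds $\Ht(a_v) < r_v + \eps$, is tailored precisely to upgrade the finite pointwise approximations supplied by the product topology into one exact realization.
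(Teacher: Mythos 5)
Your proof of the second assertion --- compactness of $\mathscr S_V^{\leq r}=\{\alpha\in\mathscr S_V : \Ht(\alpha)\leq r\}$ --- is correct and follows exactly the route the text intends: embed this set into a product of compact intervals using the growth functions $\Delta_f$, and upgrade a pointwise limit of types to an actual type by applying the preceding corollaire (the compactness theorem) to the family of formulas $\abs{f-\alpha(f)}$, $f\in\Phi_V$, with the constant height bound $r_v=r$. Two small points of hygiene: the paper's $\Delta_f$ only bounds $f^M(a)$ from \emph{above}, so the lower bound of your interval should be obtained by applying the same bound to the formula $-f$ (giving $f^M(a)\geq-\Delta_{-f}(r)$), and the target intervals must be taken in $[-\infty,+\infty\mathclose[$ to accommodate predicates such as $h_n$ --- a subtlety the paper itself explicitly sets aside.

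The local compactness step, however, contains a genuine gap. A basic neighborhood $U=\{\beta : \abs{\beta(f_i)-\alpha(f_i)}<\eps,\ 1\leq i\leq n\}$ constrains only the finitely many formulas $f_1,\dots,f_n$; for any variable $v$ not occurring in them one can modify a realization of $\alpha$ on the coordinate $v$ alone (passing to a larger globally valued field if necessary) so as to make $\beta(h_2(1,x_v))$ arbitrarily large without leaving $U$. Hence $U$ is \emph{not} contained in $\mathscr S_V^{\leq r}$ for any $r$, and your phrase ``sits, on the coordinates involved, inside the compact set'' does not produce a compact neighborhood of $\alpha$. The correct argument, when $V$ is finite, runs in the opposite direction: the set $W_r=\{\beta : \beta(h_2(1,x_v))<r \text{ pour tout } v\in V\}$ is \emph{open} (a finite intersection of preimages of open half-lines under the continuous evaluation maps $\beta\mapsto\beta(h_2(1,x_v))$), it contains $\alpha$ as soon as $r>\sup_{v}\Ht(a_v)$ for some realization $(M,a)$ of $\alpha$, and it is contained in the compact set $\mathscr S_V^{\leq r}$; its closure is therefore a compact neighborhood of $\alpha$. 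When $V$ is infinite, $W_r$ is no longer open, and the unboundedness observation above shows that no neighborhood of $\alpha$ can have compact closure (a continuous function on a compact set is bounded); so the local compactness assertion must be understood for finite $V$ (or coordinatewise, via the compact strata $\mathscr S_V^{\leq r}$), and your argument cannot be repaired in the infinite case.
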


\subsection{Structures existentiellement closes}

La th\'eorie des corps ordonn\'es et, dans une moindre mesure, celle des corps,
ont \'et\'e les exemples fondamentaux dans la cr\'eation des premi\`eres d\'efinitions
et des premiers concepts en th\'eorie des mod\`eles, 
en particulier la notion d'extension \'el\'ementaire.
Une extension $K\to L$ de corps (resp.\ de corps ordonn\'es) est \'el\'ementaire si
les formules \`a param\`etres dans~$K$ satisfaites par~$L$
sont exactement celles qui sont satisfaites par~$K$.
Si l'on se restreint aux formules qui ne font intervenir
que des quantificateurs existentiels, on obtient
dans ces deux cas
les notions d'extension alg\'ebriquement close (resp.\ r\'eellement close),
mais le principe est g\'en\'eral.
Si un corps (resp.\ un corps ordonn\'e) est existentiellement clos
dans toute extension, on dit qu'il est existentiellement clos,
et on obtient bien s\^ur les notions de corps alg\'ebriquement clos
(resp.\ r\'eel clos); l\`a encore, le principe est g\'en\'eral.

En logique continue, la formulation de ce principe requiert
une petite variation: on ne demande que des solutions approch\'ees.
\begin{defi}
%
%
Soit $M$ un corps globalement valu\'e.
On dit que $M$ est \emph{existentiellement clos} si pour
toute extension $M\to N$ de corps globalement valu\'es,
pour tous ensembles~$V$ et~$W$ de symboles de variables,
pour toute formule sans quantificateurs~$f$  dont les symboles de variables
appartienennt \`a~$V\cup W$, pour tout $a\in M^V$ et tout $r\in\R_+$,
s'il existe $b\in N^W$ tel que $\Ht(b)\leq r$ et $f^N(a,b)=0$,
alors pour tout $\eps>0$, il existe $b'\in M^W$
tel que $\Ht(b')\leq r+\eps$ et $\abs{f^M(a,b')}<\eps$.
\end{defi}


Les deux th\'eor\`emes principaux de cet expos\'e affirment 
que les cl\^otures alg\'ebriques des 
classiques corps globaux de la th\'eorie alg\'ebrique des nombres,
consid\'er\'es comme des corps globalement valu\'es, 
sont existentiellement clos.
Ils sont dus \`a \textcite{BenYaacovHrushovski-2022}
pour les corps de fonctions et \`a \textcite{Szachniewicz-2023}
pour les corps de nombres. Nous les \'enoncerons
et esquisserons leurs d\'emonstrations aux paragraphes suivants.
\medskip

Le \emph{Nullstellensatz} de Hilbert,
qui caract\'erise les corps alg\'ebriquement clos~$K$,
affirme que si un syst\`eme d'\'equations polynomiales (en plusieurs
variables) \`a coefficients dans~$K$
poss\`ede une solution dans une extension~$L$ de~$K$,
alors ce syst\`eme poss\`ede d\'ej\`a une solution dans~$K$.
Le \emph{Positivstellensatz} d'Artin et Schreier
est la variante de cet \'enonc\'e pour les corps ordonn\'es;
il caract\'erise les corps r\'eels clos.

Dans le cadre de la th\'eorie des mod\`eles, ces th\'eor\`emes montrent
que la notion de corps existentiellement clos (resp.\ de corps
r\'eel clos) poss\`ede une axiomatisation au premier ordre
qui ne fait pas intervenir d'autres corps (resp.\ d'autres corps ordonn\'es). 

Dans le cas des corps globalement valu\'es, c'est une question ouverte.
\begin{conj}
Soit $e$ un nombre r\'eel.
Il existe une th\'eorie dans le langage des corps globalement valu\'es
dont les corps globalement valu\'es (d'erreur archim\'edienne~$e$)
existentiellement clos sont les mod\`eles.
\end{conj}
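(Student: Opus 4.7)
Il s'agit ici d'une conjecture ouverte\,; nous esquissons donc une strat\'egie d'attaque plausible, ainsi que l'obstacle principal qu'elle rencontre. Affirmer l'existence d'une telle th\'eorie revient \`a dire que \GVF[e] admet un \emph{mod\`ele compagnon} au sens de la logique continue, ce qui \'equivaut, par les crit\`eres standard adapt\'es \`a ce cadre, \`a la $\forall\exists$-axiomatisabilit\'e de la classe des corps globalement valu\'es existentiellement clos. Le plan naturel comporte trois \'etapes : (i)~\'etablir une propri\'et\'e d'amalgamation dans la cat\'egorie des corps globalement valu\'es\,; (ii)~en d\'eduire que tout mod\`ele se plonge dans un existentiellement clos et caract\'eriser abstraitement cette cl\^oture\,; (iii)~exhiber une axiomatisation explicite.

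La premi\`ere \'etape est d\'ej\`a substantielle. \'Etant donn\'ees deux extensions $L_1\leftarrow K\rightarrow L_2$ de corps globalement valu\'es, il faut construire un corps globalement valu\'e~$M$ contenant $L_1$ et~$L_2$ avec intersection~$K$, et induisant les structures donn\'ees par restriction. Alg\'ebriquement, on part d'un quotient int\`egre du produit tensoriel $L_1\otimes_K L_2$\,; la difficult\'e est de munir son corps des fractions d'une fonctionnelle divisorielle compatible. En termes de mesures, il s'agit de produire une mesure sur~$\QVal(M/K)$ se projetant correctement sur $\QVal(L_1/K)$ et~$\QVal(L_2/K)$. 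Un argument d'extension de forme lin\'eaire positive, via Hahn--Banach appliqu\'e au c\^one effectif de $\Div_\Q(M/K)$, semble prometteur, mais r\'eclame une analyse fine des cas \og archim\'ediens \fg\ afin de pr\'eserver la majoration par~$e$.

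Le pas d\'ecisif, et l'obstacle principal attendu, r\'eside dans le passage de l'amalgamation \`a une axiomatisation explicite. Il faudrait en effet un \og Positivstellensatz globalement valu\'e \fg\ caract\'erisant, en termes \emph{intrins\`eques} au corps de base, quels syst\`emes d'\'egalit\'es et d'in\'egalit\'es polynomiales assortis de contraintes sur des termes locaux admettent une solution dans une certaine extension globalement valu\'ee. C'est pr\'ecis\'ement ici qu'achoppent les m\'ethodes actuelles : les th\'eor\`emes~\ref{theo.byh-dense} et~\ref{theo.s-dense} reposent sur une th\'eorie de l'intersection sp\'ecifique (birationnelle sur~$k$, ou arakelovienne sur~$\Q$) qui fournit, sous forme d'intersections positives $\langle\alpha^{d-1}\rangle$ et de leurs analogues arithm\'etiques, les contraintes de positivit\'e recherch\'ees. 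Une axiomatisation g\'en\'erale exigerait de formuler ces contraintes uniform\'ement au-dessus d'un corps globalement valu\'e arbitraire, ce qui conduit \`a d\'evelopper une th\'eorie de l'intersection \emph{relative} \`a un corps globalement valu\'e~-- programme dont les th\'eor\`emes cit\'es ne d\'ecrivent encore que les premiers jalons, et qui constitue vraisemblablement la difficult\'e essentielle \`a franchir pour r\'esoudre la conjecture.
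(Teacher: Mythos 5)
Cet \'enonc\'e est pr\'esent\'e dans le texte comme une \emph{conjecture} ouverte (\og Dans le cas des corps globalement valu\'es, c'est une question ouverte \fg) : le papier n'en donne aucune d\'emonstration, et il n'y a donc pas de preuve \`a laquelle comparer votre proposition. Vous avez correctement identifi\'e ce statut, et votre texte est une esquisse de strat\'egie, non une preuve ; il ne faut donc pas le pr\'esenter comme r\'esolvant l'\'enonc\'e.

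Sur le fond, votre plan est coh\'erent avec le cadre du papier : l'existence d'une telle th\'eorie \'equivaut bien \`a l'existence d'une mod\`ele-compagne de \GVF[e], c'est-\`a-dire au fait que la classe des mod\`eles existentiellement clos soit \'el\'ementaire (et, la th\'eorie \'etant inductive, elle serait alors $\forall\exists$-axiomatisable). Deux nuances toutefois. D'une part, votre \'etape (i) n'est pas l'obstacle : l'amalgamation des corps globalement valu\'es est essentiellement acquise dans les travaux de Ben Yaacov--Hrushovski et al. (c'est ce qui permet de parler d'extensions et d'ultraproduits de \GVF{} tout au long du texte), et l'amalgamation seule ne donne jamais une mod\`ele-compagne. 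D'autre part, vous situez correctement la vraie difficult\'e dans l'\'etape (iii) : il faudrait un \og Positivstellensatz globalement valu\'e \fg{} exprimant, de fa\c{c}on intrins\`eque et au premier ordre (continu), quels types sans quantificateurs sont r\'ealisables dans une extension ; les th\'eor\`emes~\ref{theo.byh-dense} et~\ref{theo.s-dense} ne fournissent cela que au-dessus d'un corps trivialement valu\'e ou de~$\Q$, et non au-dessus d'un corps globalement valu\'e arbitraire. Votre diagnostic est donc raisonnable, mais il reste une analyse heuristique et non un argument.
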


De mani\`ere \'equivalente, la th\'eorie \GVF[e] admet-elle une th\'eorie mod\`ele compagne?

Si c'\'etait le cas, tout corps globalement valu\'e existentiellement clos~$K$
serait \'el\'ementairement \'equivalent \`a l'un des trois corps
existentiellement clos suivants :
\begin{itemize}
\item Si $h(1,2)>0$, au corps $\overline\Q$,
muni de la structure de corps globalement valu\'e
obtenue en multipliant la structure standard par $h(1,2)/\log(2)$;
\item
Si $h(1,2)=0$ et $K$ est de caract\'eristique z\'ero,
\`a la cl\^oture alg\'ebrique du corps~$\Q(T)$
muni de la structure standard qui est triviale sur~$\Q$;
\item
Si $h(1,2)=0$ et $K$ est de caract\'eristique $p>0$,
\`a la cl\^oture alg\'ebrique du corps~$\F_p(T)$
muni de la structure standard qui est triviale sur~$\F_p$.
\end{itemize}

Les th\'eor\`emes~\ref{theo.byh} et~\ref{theo.s} entra\^{\i}nent
l'\'enonc\'e plus faible que tout corps globalement valu\'e
se plonge dans une ultrapuissance d'un de ces trois corps.

\subsection{Cl\^oture existentielle: corps de fonctions}

\begin{theo}[\cite{BenYaacovHrushovski-2022}] \label{theo.byh}
Soit $k$ un corps et soit $K$ la la cl\^oture alg\'ebrique
du corps~$k(T)$, muni
de sa structure standard de corps globalement valu\'e
pour laquelle $h(1,T)=1$.
Alors $K$ est un corps valu\'e 
existentiellement clos.
\end{theo}

 
On peut supposer que $k$ est alg\'ebriquement clos.
Posons $K=\overline{k(T)}$. On consid\`ere un corps globalement
valu\'e~$L$ qui \'etend~$K$ et un point $b=(b_1,\dots,b_n)\in L^n$;
il s'agit d'approcher le \og type sans-quantificateur\fg de~$b$ 
par celui d'un point $a\in K^n$.

Commen\c{c}ons par \'etudier le cas particulier o\`u les seules formules
consid\'er\'ees ne font intervenir que des param\`etres dans~$k$.
Par \'elimination des quantificateurs dans la th\'eorie~\ACF,
on se ram\`ene \`a la question suivante:
Soit $X$ un $k$-sch\'ema propre, int\`egre et normal muni d'un isomorphisme
$k(b_1,\dots,b_n)\simeq k(X)$,  
de sorte que le point~$b$ appara\^{\i}t comme un point g\'en\'erique de~$X$;
posons $d=\dim(X)$.
Soit $Y$ un ferm\'e de Zariski strict de~$X$.
D'apr\`es le th\'eor\`eme~\ref{theo.byh-dense},
la restriction \`a~$k(X)$ de la structure de corps globalement valu\'ee de~$L$ 
est approch\'ee par une fonctionnelle divisorielle g\'eom\'etrique;
quitte \`a remplacer~$X$ par un mod\`ele birationnel, on peut
supposer qu'elle est associ\'ee \`a  un diviseur de Cartier ample~$A$ sur~$X$.
D'apr\`es le th\'eor\`eme de Bertini,
il existe une courbe~$C$ sur~$X$ qui n'est pas contenue dans~$Y$
et un nombre r\'eel~$w>0$ tel que dans la classe de $A^{d-1}$
soit \'egale \`a~$w[C]$.
Le corps~$k(C)$ est de type fini sur~$k$ et de degr\'e de transcendance~$1$,
il admet donc un $k$-plongement dans~$K$.

D'apr\`es le lemme suivant,
il existe un tel plongement qui induit~$w$ fois la structure
induite par~$L$. Le point g\'en\'erique de~$C$ 
fournit alors l'\'el\'ement $(c_1,\dots,c_n)\in K^n$  qui satisfait les 
conditions requises.

\begin{lemm}
Soit $k$ un corps et soit $K$ la cl\^oture alg\'ebrique de~$k(T)$,
consid\'er\'ee comme corps globalement valu\'e avec $\Ht(T)=1$. 
Soit $a$ un nombre rationnel strictement positif.
Il existe un $k$-automorphisme~$\theta$ de~$K$
tel que $\Ht(\theta (x))=a \Ht(x)$ pour tout $x\in K$.
\end{lemm}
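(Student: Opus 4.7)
Mon plan est de construire~$\theta$ comme prolongement d'un $k$-isomorphisme bien choisi de~$k(T)$ sur un sous-corps de~$K$, puis de d\'eduire la propri\'et\'e de dilatation sur tout~$K$ de l'unicit\'e, \`a un scalaire positif pr\`es, des structures de corps globalement valu\'e sur~$K$ triviales sur~$k$.

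\'Ecrivons $a=p/q$ avec $p,q$ entiers strictement positifs. Je choisirai d'abord un \'el\'ement $u\in K$ tel que $u^q=T^p$ (un tel~$u$ existe puisque $K$ est alg\'ebriquement clos). La multiplicativit\'e des hauteurs donnera alors $q\Ht(u)=\Ht(u^q)=\Ht(T^p)=p\Ht(T)=p$, soit $\Ht(u)=a$. L'\'el\'ement~$u$ n'est pas alg\'ebrique sur~$k$, car sinon $T^p=u^q$ le serait aussi, contredisant la transcendance de~$T$. L'application $\tau\colon k(T)\to K$ envoyant~$T$ sur~$u$ sera donc un $k$-isomorphisme de~$k(T)$ sur~$k(u)$. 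Comme $T$ satisfait $X^p-u^q=0$ sur~$k(u)$, le corps~$K$ est alg\'ebrique sur~$k(u)$ et en sera donc une cl\^oture alg\'ebrique; le th\'eor\`eme d'extension des isomorphismes aux cl\^otures alg\'ebriques fournira alors un $k$-automorphisme $\theta\colon K\to K$ prolongeant~$\tau$.

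Il restera \`a v\'erifier que $\Ht(\theta(x))=a\Ht(x)$ pour tout $x\in K$. Je poserai $h'=h\circ\theta$: comme $\theta$ est un $k$-automorphisme d'anneaux, $h'$ sera encore la hauteur d'une structure de corps globalement valu\'e sur~$K$, triviale sur~$k$. L'argument-cl\'e sera alors l'assertion de rigidit\'e suivante: les structures de corps globalement valu\'e sur $K=\overline{k(T)}$ triviales sur~$k$ forment une unique demi-droite, engendr\'ee par la structure standard. Cet \'enonc\'e g\'en\'eralise celui de l'exemple~\ref{exem-aw} au cas d'un corps de base arbitraire et de la cl\^oture alg\'ebrique; il repose sur la classification des fonctionnelles divisorielles globales sur un corps de fonctions d'une variable sur~$k$ (multiples du degr\'e sur la courbe projective lisse correspondante) ainsi que sur la compatibilit\'e des degr\'es $[K_j:K_i]$ sous les morphismes finis de courbes $X_j\to X_i$ dans le syst\`eme inductif des sous-extensions finies de~$K/k(T)$. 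Acceptant ce fait, on obtiendra $h'=c h$ pour un certain $c>0$, et l'\'evaluation en~$T$ donnera $c=\Ht(\theta(T))=\Ht(u)=a$. La difficult\'e principale r\'esidera dans cette assertion de rigidit\'e; tout le reste se r\'eduit \`a la th\'eorie de Galois \'el\'ementaire et \`a la multiplicativit\'e des hauteurs.
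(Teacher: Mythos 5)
Votre preuve est correcte et suit essentiellement la m\^eme strat\'egie que celle du texte : construire un $k$-automorphisme de~$K$ envoyant~$T$ sur une puissance (ici rationnelle, via une racine $q$-i\`eme de~$T^p$ ; le texte se ram\`ene au cas entier et pose $\theta(T)=T^a$), puis invoquer l'unicit\'e \`a scalaire positif pr\`es de la structure de corps globalement valu\'e sur~$\overline{k(T)}$ triviale sur~$k$ et \'evaluer en~$T$. Le point de rigidit\'e que vous isolez comme difficult\'e principale est exactement celui que le texte admet aussi sans plus de d\'etail, de sorte que les deux r\'edactions sont au m\^eme niveau de compl\'etude.
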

\begin{proof}
Il suffit de traiter le cas o\`u $a$ est un entier naturel non nul.
Soit $\theta\colon k(T)\to K$ le $k$-morphisme tel que $\theta(T)=T^a$.
Il se prolonge en un $k$-morphisme de~$K$ dans lui-m\^eme,
encore not\'e~$\theta$; c'est un $k$-automorphisme de~$K$.
Par unicit\'e \`a scalaire pr\`es de la structure de corps globalement valu\'e sur~$K$
qui induit la structure triviale sur~$k$, il existe un nombre r\'eel~$c\geq0$
tel que $\Ht(\theta(x))=c \Ht(x)$ pour tout $x\in K$; en prenant $x=T$,
on voit que $c=a$, d'o\`u le lemme.
\end{proof}

Traitons maintenant le cas g\'en\'eral. 
Par un argument de th\'eorie des mod\`eles, il s'agit
de d\'emontrer l'existence d'un $K$-morphisme de~$L$ 
dans une ultrapuissance de~$K$.
L'\'etude pr\'ec\'edente des formules \`a param\`etres dans~$k$
entra\^{\i}ne l'existence d'un $k$-morphisme~$\sigma_L$ de~$L$
dans une ultrapuissance~$K^*$ de~$K$, associ\'ee \`a un ultrafiltre
non principal~$\mathfrak u$ sur un ensemble infini~$I$.
Consid\'erons la restriction~$\sigma$ de~$\sigma_L$ \`a~$k[T]$;
posons $r=\Ht(\sigma(T))$; on a $r>0$.
On peut repr\'esenter~$\sigma$ comme l'ultrapuissance d'une famille
$(\sigma_i)$ de $k$-morphismes de~$k[T]$ dans~$K$; 
posons $r_i=\Ht(\sigma_i(T))$.
Alors, $\lim_{i,\mathfrak u} r_i = r$.
Comme $r>0$, l'ensemble des~$i$ tels que $r_i>0$ appartient \`a~$\mathfrak u$;
il n'est pas restrictif de supposer que $r_i>0$ pour tout~$i$.
Alors, pour tout~$i$, on a $\sigma_i(T)\notin k$;
comme $k$ est alg\'ebriquement clos, 
le morphisme~$\sigma_i\colon k[T]\to K$ 
se prolonge en un $k$-morphisme de~$k(T)$ dans~$K$, 
puis, l'extension~$K$ de~$k(T)$ \'etant alg\'ebrique, 
en un $k$-morphisme de~$K$ dans~$K$ que l'on note encore~$\sigma_i$.

La description valuative des hauteurs montre 
que pour tout~$i$,
$r_i=\Ht(\sigma_i(T))$ est un nombre \emph{rationnel} strictement positif.
D'apr\`es le lemme pr\'ec\'edent, il existe pour tout~$i$ 
un automorphisme~$\theta_i$ de~$K$
tel que $\mathord{\Ht}\circ\theta_i = a_i \Ht$.
L'ultrapuissance des~$\theta_i$ d\'efinit un $k$-automorphisme~$\theta$
de l'ultrapuissance~$K^*$.
Alors, $\theta^{-1}\circ\sigma_L$ est un $K$-plongement de~$L$ dans~$K^*$,
ce qui conclut la d\'emonstration du th\'eor\`eme~\ref{theo.byh}.

\begin{rema}
La d\'emonstration a utilis\'e de mani\`ere cruciale 
l'unicit\'e \`a facteur pr\`es d'une structure de corps valu\'e
sur la cl\^oture alg\'ebrique de~$k(T)$ qui induise la structure
triviale sur~$k$.

Ce r\'esultat ne vaut pas pour la cl\^oture
alg\'ebrique d'un corps de fractions rationnelles $K=k(S,T)$
en deux variables. Munissons-le d'une structure de corps globalement
valu\'e triviale sur~$k$, par exemple d\'eduite d'un fibr\'e en droites
ample sur~$\P_2$ et soit $a,b\in K$ deux \'el\'ements alg\'ebriquement
ind\'ependants sur~$k$ tels que $\Ht(a)\neq \Ht(b)$.
Soit $L$ le sous-corps de~$K$ donn\'e par $L=k(a+b,ab)$
muni de sa structure de corps globalement valu\'e induite par celle de~$K$.
Comme $K$ est alg\'ebrique sur~$L$, on peut \'egalement le munir
de la structure de corps globalement valu\'e sym\'etrique qui prolonge
celle de~$L$; notons~$K'$ ce corps globalement valu\'e.
Puisque $\Ht(a)\neq\Ht(b)$, les structures de corps globalement
valu\'es de~$K$ et~$K'$ ne sont pas isomorphes. 
On peut m\^eme en d\'eduire que la cl\^oture alg\'ebrique de~$K$,
munie de la structure de corps globalement valu\'ee sym\'etrique qui 
prolonge celle de~$K$,
n'est pas existentiellement close.
\end{rema}
\subsection{Cl\^oture existentielle: corps de nombres}

\begin{theo}[\cite{Szachniewicz-2023}] \label{theo.s}
Munissons le corps~$\Q$ des nombres rationnels
de sa structure standard  de corps globalement valu\'e,
pour laquelle $h(1,2)=\log(2)$. 
Alors le corps~$\overline\Q$ des nombres alg\'ebriques,
muni de l'unique structure de corps globalement valu\'e
qui est sym\'etrique et \'etend la structure donn\'ee sur~$\Q$,
est existentiellement clos.
\end{theo}

Soit $F$ un corps globalement 
valu\'e de caract\'eristique z\'ero, induisant la structure standard sur~$\Q$,
et soit $b=(b_1,\dots,b_n)$ une suite finie d'\'el\'ements de~$F$; 
il s'agit d'approcher le type sans quantificateur de~$b$.
On peut remplacer~$F$ par le corps $\Q(b_1,\dots,b_n)$. Alors,
sa structure de corps globalement valu\'e se d\'ecrit en termes g\'eom\'etriques.
La question se traduit ainsi en un \'enonc\'e de g\'eom\'etrie arithm\'etique.

Soit $\mathscr X$ un sch\'ema propre sur~$\Spec(\Z)$, surjectif, int\`egre
et normal; soit $d=\degtr_\Q (\kappa(\mathscr X))$, de sorte
que $\dim(\mathscr X)=d+1$.

Tout diviseur arithm\'etique $(D,g_D)$ sur~$\mathscr X$
induit une fonction \emph{hauteur} sur l'ensemble des points ferm\'es 
de $\mathscr X_\Q$.
Soit $y$ un tel point et soit~$\mathscr Y$
son adh\'erence dans~$\mathscr X$ pour la topologie de Zariski;
si $y$ n'appartient pas au support de~$D$, on peut restreindre~$D$
\`a~$\mathscr Y$ et poser
\[ h_{(D,g_D)}(y) = (D|_{\mathscr Y}) 
 = \sum_p \nu_p(D, \mathscr Y) \log (p) + \int_{\mathscr Y(\C)} g_D , \]
o\`u la somme porte sur l'ensemble des nombres premiers~$p$,
$\nu_p(D,\mathscr Y)$ est la multiplicit\'e d'intersection au-dessus de~$(p)$
du 1-cycle~$\mathscr D$ et du diviseur de Cartier~$D$ sur~$\mathscr X$,
et l'int\'egrale $\int_{\mathscr Y(\C)}g_D$ est la somme des valeurs
de~$g_D$ en tous les conjugu\'es de~$y$ dans~$\mathscr X(\C)$.
Dans le cas g\'en\'eral, on remplace~$(D,g_D)$ par un diviseur arithm\'etique
lin\'eairement \'equivalent $(D',g_{D'})$ tel que $y$ n'appartienne
pas au support de~$D'$.

Pour tout point~$y$, l'application $(D,g_D)\mapsto h_{(D,g_D)}(y)$
est lin\'eaire. Elle s'annule en tout diviseur arithm\'etique
de la forme $\hdiv(f)$.

\begin{prop}
Soit $\lambda\colon \hDiv(\mathscr X)\to\R$ une forme lin\'eaire
qui est positive sur les diviseurs arithm\'etiques pseudo-effectifs
et v\'erifie $\lambda((0,1))=1$.
Soit $Z$ un ferm\'e de Zariski strict de~$\mathscr X_\Q$.
Alors $\lambda$ est limite de formes lin\'eaires de la forme
$(D,g_D)\mapsto h_{(D,g_D)}(y)$, o\`u $y$ parcourt l'ensemble
des points alg\'ebriques de~$\mathscr X_\Q\setminus Z$.
\end{prop}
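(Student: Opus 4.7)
On reprend la strat\'egie de la d\'emonstration du th\'eor\`eme~\ref{theo.byh-dense}, en rempla\c{c}ant les outils g\'eom\'etriques par leurs analogues arithm\'etiques. Apr\`es prolongement de~$\lambda$ par $\R$-lin\'earit\'e, et quitte \`a lui ajouter un petit multiple d'une fonctionnelle du type $(D,g_D)\mapsto (A,g_A)^d\cdot (D,g_D)$ associ\'ee \`a un diviseur arithm\'etique ample $(A,g_A)$, on se ram\`ene au cas o\`u $\lambda$ est strictement positive sur tout $\R$-diviseur arithm\'etique gros non nul. La fonction
\[ \phi\colon (D,g_D)\mapsto \hvol(D,g_D)/\lambda(D,g_D)^{d+1} \]
sur le c\^one gros est alors continue, homog\`ene de degr\'e~$0$, et s'annule au bord de ce c\^one~; elle atteint donc son maximum en un certain $(D_0,g_0)$ gros. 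En \'ecrivant que sa diff\'erentielle s'annule en~$(D_0,g_0)$ et en invoquant le th\'eor\`eme de diff\'erentiabilit\'e du volume arithm\'etique de Chen et Ikoma, on en d\'eduit l'\'egalit\'e
\[ \lambda = c\cdot \langle (D_0,g_0)^d\rangle\]
dans l'espace des formes lin\'eaires sur~$\hDiv_\R(\mathscr X)$, avec $c=\lambda(D_0,g_0)/\hvol(D_0,g_0)>0$, la normalisation $\lambda((0,1))=1$ fixant~$c$.

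Par la d\'efinition m\^eme de l'intersection positive, la forme lin\'eaire $\langle(D_0,g_0)^d\rangle$ est limite faible de formes lin\'eaires du type
\[ (E,g_E)\mapsto (A_1,g_1)\cdots(A_d,g_d)\cdot\pi^*(E,g_E), \]
o\`u $\pi\colon\mathscr X'\to\mathscr X$ parcourt la classe des morphismes projectifs et birationnels et o\`u les $(A_k,g_k)$ sont des $\Q$-diviseurs arithm\'etiques amples sur~$\mathscr X'$ v\'erifiant $(A_k,g_k)\leq\pi^*(D_0,g_0)$. Le point crucial consiste alors \`a repr\'esenter chacune de ces fonctionnelles d'intersection, \`a un facteur positif et \`a une approximation pr\`es, comme l'\'evaluation $(E,g_E)\mapsto h_{(E,g_E)}(y)$ en un point alg\'ebrique convenablement choisi de~$\mathscr X_\Q\setminus Z$.

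C'est ici qu'intervient un th\'eor\`eme de \emph{Bertini arithm\'etique}, dans l'esprit des travaux r\'ecents de~\textcite{Charles-2021}: pour tout entier~$n$ assez grand, on construit des sections globales $s_1,\dots,s_d$ des fibr\'es en droites hermitiens associ\'es \`a $n(A_1,g_1),\dots,n(A_d,g_d)$, de norme strictement inf\'erieure \`a~$1$ en tout point de~$\mathscr X'(\C)$, choisies en position suffisamment g\'en\'erale pour que le sch\'ema $\mathscr Y = \div(s_1)\cap\cdots\cap\div(s_d)$ soit une courbe arithm\'etique int\`egre, surjective sur~$\Spec(\Z)$, dont la fibre g\'en\'erique $\mathscr Y_\Q$ \'evite $\pi^{-1}(Z)$ ainsi que le support de~$E$. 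Cette fibre g\'en\'erique d\'efinit un unique point alg\'ebrique~$y$ de~$\mathscr X'_\Q$, d'image~$\pi(y)\in\mathscr X_\Q\setminus Z$. Des applications r\'ep\'et\'ees de la formule de Poincar\'e--Lelong identifient alors le nombre d'intersection $(nA_1,ng_1)\cdots(nA_d,ng_d)\cdot\pi^*(E,g_E)$ avec $h_{(E,g_E)}(\pi(y))$, \`a un reste pr\`es contr\^ol\'e par les normes sup\'erieures des~$s_k$; en divisant par~$n^d$ et en faisant tendre~$n\to+\infty$, on obtient l'approximation voulue. L'obstacle principal est la mise en \oe uvre de ce Bertini arithm\'etique avec un contr\^ole uniforme sur une famille finie de diviseurs~$E$ (impos\'e par la topologie de la convergence simple sur l'espace des fonctionnelles) et un contr\^ole asymptotique suffisamment pr\'ecis des termes de reste pour garantir la convergence.
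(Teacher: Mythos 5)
Votre plan suit pour l'essentiel la m\^eme route que le texte, qui se contente d'invoquer deux th\'eor\`emes : le th\'eor\`eme~\ref{theo.s-dense} (densit\'e des fonctionnelles divisorielles arithm\'etiques, dont vous re-esquissez la preuve par diff\'erentiabilit\'e du volume arithm\'etique) pour se ramener au cas o\`u $\lambda$ est de la forme $(D,g_D)\mapsto \frac1{\vol(A_\Q)}\langle(A,g_A)^d\rangle\cdot(D,g_D)$ avec $(A,g_A)$ ample, puis le th\'eor\`eme~\ref{theo.s-h} pour approcher cette fonctionnelle d'intersection par des hauteurs de points hors de~$Z$. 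Une premi\`ere r\'eserve sur votre re-d\'erivation de la premi\`ere \'etape : l'existence d'un maximiseur de $\hvol/\lambda^{d+1}$ sur le c\^one gros ne d\'ecoule pas d'un simple argument de compacit\'e comme dans le cas g\'eom\'etrique, car $\hDiv_\R(\mathscr X)$ est de dimension infinie (il contient $\mathscr C(\mathscr X(\C)/\mathord\sim;\R)$) ; c'est pr\'ecis\'ement l'une des difficult\'es que r\`egle Szachniewicz, et il vaut mieux citer le th\'eor\`eme~\ref{theo.s-dense} que d'affirmer que « elle atteint donc son maximum ».

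La lacune r\'eelle se situe dans la derni\`ere \'etape, le contr\^ole du reste archim\'edien. Choisir des sections~$s_k$ de norme $<1$ en tout point garantit seulement $\log\norm{s_k}^{-1}>0$ ; pour une section de $n(A_k,g_k)$, cette fonction est typiquement d'ordre~$n$, de sorte que les int\'egrales du type $\int_{\mathscr X'(\C)}\log\norm{s_1}^{-1}\,\omega_2\wedge\cdots\wedge\omega_d\wedge\omega_E$ croissent comme~$n^d$ : la division par~$n^d$ ne les fait donc \emph{pas} tendre vers~$0$, et l'approximation finale n'est pas \'etablie. C'est exactement pour cela que la d\'emonstration du th\'eor\`eme~\ref{theo.s-h} fait appel aux r\'esultats de Wilms et de Qu--Yin, qui garantissent qu'une proportion arbitrairement proche de~$1$ des sections v\'erifie $\int\log\norm s^{-1}\omega^d<\eps\,(D,g_D)^{d+1}$, c'est-\`a-dire que le reste est petit \emph{relativement} au terme principal ; combin\'es au Bertini arithm\'etique de Charles (irr\'eductibilit\'e de $\div(s)$, \'evitement d'un ferm\'e prescrit), ils permettent une r\'ecurrence sur la dimension en coupant par \emph{une} section \`a la fois. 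Votre variante « $d$ sections simultan\'ees » exige de surcro\^{\i}t un \'enonc\'e de position g\'en\'erale simultan\'ee (int\'egralit\'e de $\div(s_1)\cap\cdots\cap\div(s_d)$, surjectivit\'e sur $\Spec(\Z)$) que vous ne justifiez pas. Vous identifiez vous-m\^eme ce contr\^ole des restes comme « l'obstacle principal », mais il n'est pas surmont\'e dans votre r\'edaction.
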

\begin{proof}
Quitte \`a remplacer~$\mathscr X$ par un mod\`ele birationnel,
le th\'eor\`eme~\ref{theo.s-dense} permet de supposer que $\lambda$
est de la forme $(D,g_D)\mapsto \frac1{\vol(A_\Q)}\langle (A,g_A)^d \rangle\cdot (D,g_D)$,
o\`u $(A,g_A)$ est un diviseur arithm\'etique ample sur~$\mathscr X$.
Le th\'eor\`eme suivant affirme alors le r\'esultat voulu.
\end{proof}

\begin{theo}[\cite{Szachniewicz-2023}] \label{theo.s-h}
Soit $(A,g_A)$ un diviseur arithm\'etique ample sur~$\mathscr X$
et soit $Z$ un ferm\'e de Zariski strict de~$\mathscr X_\Q$.
Soit $(D_i,g_i)_i$ une famille finie de diviseurs arithm\'etiques
sur~$\mathscr X$. 
Pour tout $\eps>0$, il existe un point ferm\'e~$y\in \mathscr X_\Q$
tel que $y\notin Z$ et tel que 
\[ \Abs{ h_{(D_i,g_i)} (y) - \frac1{\vol (A_\Q)} (A,g_A)^d (D_i,g_i)}<\eps \]
pour tout~$i$.
\end{theo}

\begin{proof}
La g\'eom\'etrie d'Arakelov a largement exploit\'e l'interaction
entre hauteurs et intersection arithm\'etique,  en particulier
l'in\'egalit\'e fondamentale, pour tout diviseur arithm\'etique ample $(D,g_D)$,
 \begin{equation}
\label{eq.zhang} 
   \liminf_{y\in \mathscr X_\Q} h_{(D,g_D)}(y) \geq  \frac{(D,g_D)^{d+1}}{(d+1)\vol(D_\Q)}, \end{equation}
d\'emontr\'ee par~\textcite{Zhang-1995b}.
La limite inf\'erieure signifie que $y$ tend vers le point g\'en\'erique de~$\mathscr X_\Q$.
La d\'emonstration du th\'eor\`eme~\ref{theo.s-h}
reprend celle de cette in\'egalit\'e 
en la pr\'ecisant en plusieurs points cruciaux.

Le th\'eor\`eme d'amplitude arithm\'etique de~\textcite{Zhang-1995b}
fournit, lorsque $n$ tend vers l'infini,
des sections $s\in \widehat H^0(\mathscr X, n(D,g_D))$
qui ne s'annulent pas en des points prescrits.
C'est la cl\'e de l'in\'egalit\'e pr\'ec\'edente.

Le th\'eor\`eme 1.5 de~\textcite{Charles-2021} affirme que pour tout
ferm\'e strict~$\mathscr Y$ de~$\mathscr X$, la proportion 
de ces sections~$s$ telles que $\div(s)$ ne soit pas contenu
dans~$\mathscr Y$ tend vers~1.

Lorsque $d\geq 1$, \textcite{Wilms-2022} et~\textcite{QuYin-2024}
ont \'etabli que lorsque $\eps\to 0$,
qu'une proportion arbitrairement proche de~1 de ces sections~$s$
v\'erifient les conditions suivantes:
\begin{itemize}
\item Le diviseur~$\div(s)$ est irr\'eductible, sa fibre g\'en\'erique est lisse;
\item La fonction positive $\log \norm s^{-1}$ sur $\mathscr X(\C)$
est petite en norme~$L^1$ par rapport \`a la mesure $\omega_{(D,g_D)}^d$
sur~$\mathscr X(\C)$;
plus pr\'ecis\'ement:
\[ \int_{\mathscr X(\C)} \log \norm s^{-1} \omega_{(D,g_D)}^d < \eps (D,g_D)^{d+1}. \]
\end{itemize}
Si $d\geq 1$, une variante de ce dernier \'enonc\'e permet 
de choisir une section $s\in\widehat H^0(\mathscr X, n(D,g_D))$
telle que $\div(s)$ soit irr\'eductible, sa fibre g\'en\'erique soit lisse
et non contenue dans~$Z$,
et telle que $\log \norm s^{-1}$ soit petite en norme~$L^1$
par rapport aux mesures $\omega_{D_i,g_i}\wedge \omega_{A,g_A}^{d-1}$.
Posons $\mathscr Y=\div(s)$. 
L'assertion pour la normalisation de~$\mathscr Y$ 
et les restrictions \`a des diviseurs arithm\'etiques $(D_i,g_i)$
fait intervenir
$(A,g_A)|_{\mathscr Y}^{d-1}(D_i,g_{D_i})|_{\mathscr Y}$
qui diff\`ere de $(A,g_A)^d(D_i,g_{D_i})$ par un terme archim\'edien,
int\'egrale de $\log\norm s^{-1}$
qui, par le choix de~$s$, est petit. Elle entra\^{\i}ne ainsi
l'assertion pour~$\mathscr X$.
Par r\'ecurrence, l'assertion se ram\`ene ainsi au cas \'evident o\`u $d=0$.
\end{proof}

\subsection{Perspectives}

Comme \'evoqu\'e dans l'introduction, les r\'esultats expos\'es dans
ce texte ouvrent plus encore de questions qu'ils n'en r\'esolvent;
ils offrent aussi un regard neuf sur des r\'esultats
de g\'eom\'etrie diophantienne, certains anciens.

Sous sa version initiale, le th\'eor\`eme de~\textcite{FeketeSzego-1955}
concerne l'existence d'entiers alg\'ebriques dont tous les conjugu\'es appartiennent
\`a une partie~$S$ du plan complexe qui est compacte et sym\'etrique
par rapport \`a l'axe r\'eel. Si la capacit\'e logarithmique de cette
partie est strictement n\'egative,  
il n'y en a qu'un nombre fini ; si elle est positive,
tout voisinage en contient une infinit\'e. Si elle est nulle,
on peut ajouter que ces points (et leurs conjugu\'es) s'\'equidistribuent
vers la mesure d'\'equilibre de cette partie \parencite[th\'eor\`eme~1.2.6]{Serre-2019a}.

Ces \'enonc\'es admettent des variantes sur des vari\'et\'es alg\'ebriques
de dimension arbitraire, et la th\'eorie des corps globalement valu\'es 
les \'eclaire d'un jour nouveau: ces th\'eor\`emes deviennent un simple
cas particulier du th\'eor\`eme~\ref{theo.s}
que $\overline\Q$  est un corps globalement valu\'e existentiellement clos.
Cependant, les d\'emonstrations ne sont pas tout \`a fait nouvelles, car 
les techniques mises en \oe uvre sont parall\`eles 
\`a celles des th\'eor\`emes initiaux, ou de leurs extensions via
la g\'eom\'etrie d'Arakelov et l'utilisation de th\'eor\`emes
comme l'in\'egalit\'e~\eqref{eq.zhang}.

La th\'eorie des corps globalement valu\'es pose \'egalement de nouvelles questions.
Si $K$ est un corps globalement valu\'e, on peut consid\'er\'e le $1$-type~$p$
des points de hauteur~$0$. Les r\'esultats que je viens d'\'evoquer
donnent acc\`es \`a sa partie \og sans quantificateurs \fg~$p_{\text{sq}}$.

Dans le cas purement non archim\'edien, 
les \'el\'ements~$x$ de~$K$ tels que $\Ht(x)=0$ forment
alors un sous-corps~$k$ de~$K$ et ce type~$p_{\text{sq}}$ se d\'ecrit en termes
du type g\'en\'erique sur~$k$.
Dans le cas arithm\'etique, lorsque $K=\Q$
est muni de sa structure standard, 
le th\'eor\`eme d'\'equidistribution de~\textcite{Bilu-1997}
s'incarne de la fa\c{c}on suivante: 
ce type sans quantificateur 
est constitu\'e de l'origine, des racines de l'unit\'e et d'un unique 
type sans quantificateur auxiliaire. 
En ins\'erant dans la d\'emonstration les arguments du th\'eor\`eme~1.2.6
de~\textcite{Serre-2019a}, 
on obtient 
d'ailleurs un r\'esultat
analogue pour la fonction hauteur associ\'ee \`a un compact de~$\C$
par la th\'eorie du potentiel en rempla\c{c}ant $\sup(1,\log(x))$
par la fonction de Green de ce compact.

De m\^eme, ces travaux ouvrent la voie \`a une \'etude 
des hauteurs sur les vari\'et\'es ab\'eliennes sur les corps globalement valu\'es,
parall\`ele \`a celle qui est faite sur les corps de type fini
et \`a la recherche d'analogues des th\'eor\`emes classiques,
comme celui (\textsc{Lang}, \textsc{N\'eron})
qui d\'ecrit les points de hauteur nulle sur une vari\'et\'e ab\'elienne
d\'efinie sur un corps de type fini. 

Ces travaux ouvrent enfin de nombreuses questions
de th\'eorie des mod\`eles: stabilit\'e, rangs, imaginaires\dots

 

\printbibliography

\addressindent.45\textwidth

\end{document}